\tikzstyle{vertex}=[circle, draw, inner sep=0pt, minimum size=6pt, fill=black]
\newcommand{\vertex}{\node[vertex]}
\theoremstyle{plain}
\newtheorem{thm}{Theorem}[section]
\newtheorem{lemma}[thm]{Lemma}
\newtheorem{prop}[thm]{Proposition}
\newtheorem{cor}[thm]{Corollary}
\newtheorem*{thm*}{Theorem}
\newtheorem*{lemma*}{Lemma}
\newtheorem*{prop*}{Lemma}
\newtheorem*{cor*}{Corollary}
\newtheorem*{conj*}{Conjecture}
\theoremstyle{definition}
\newtheorem{defn}[thm]{Definition}
\newtheorem*{defn*}{Definition}
\newtheorem{ex}[thm]{Example}
\theoremstyle{remark}
\newtheorem{rmk}[thm]{Remark}
\newcommand{\zz}{\mathbb{Z}}
\newcommand{\rr}{\mathbb{R}}
\newcommand{\cc}{\mathbb{C}}
\newcommand{\kk}{\mathbb{K}}
\newcommand{\bfp}{\mathbf{p}}
\newcommand{\bfq}{\mathbf{q}}
\newcommand{\cals}{\mathcal{S}}
\newcommand{\rank}{\textnormal{rank}}
\newcommand{\puis}{\cc\{\!\{t\}\!\}}
\DeclareMathOperator\val{val}
\DeclareMathOperator\trop{trop}
\DeclareMathOperator{\cm}{CM}
\DeclareMathOperator\cip{cip}
\DeclareMathOperator\clade{clade}
\DeclareMathOperator\tp{tp}
\newcommand{\TP}{\mathbb{TP}}
\DeclareMathOperator{\gr}{Gr}
\title{The tropical Cayley-Menger variety}
\author{Daniel Irving Bernstein}
\address{Institute for Data, Systems, and Society, Massachusetts Institute of Technology, 77 Massachusetts Avenue, Cambridge, MA 02139}
\email{dibernst@mit.edu}\urladdr{https://dibernstein.github.io}
\author{Robert Krone}
\address{UC Davis Department of Mathematics, 1 Shields Avenue, Davis, CA 95616}
\email{rkrone@math.ucdavis.edu}
\urladdr{http://rckr.one/}
\begin{document}
\begin{abstract}
    The \emph{Cayley-Menger variety} is the Zariski closure of the set of vectors specifying the pairwise squared distances between $n$ points in $\rr^d$.
    This variety is fundamental to algebraic approaches in rigidity theory.
    We study the tropicalization of the Cayley-Menger variety.
    In particular, when $d = 2$, we show that it is the Minkowski sum of the set of ultrametrics on $n$ leaves with itself,
    and we describe its polyhedral structure.
    We then give a new, tropical, proof of Laman's theorem.
\end{abstract}

\maketitle


Tropicalization is a process that transforms a variety into a polyhedral complex
in a way that preserves many essential features.
One of our main results is a combinatorial description of the tropicalization of the \emph{Pollaczek-Geiringer variety},
i.e.~the Zariski closure of the set of vectors specifying the pairwise squared distances between $n$ points in $\rr^2$.
We show that this tropical variety has a simplicial complex structure that we describe in terms of pairs of rooted trees.
Another main result is a new proof of Laman's theorem from rigidity theory
via our combinatorial description of the tropicalization of the Pollaczek-Geiringer variety.

Laman's theorem can be seen as a combinatorial description of the algebraic matroid underlying the Pollaczek-Geiringer variety.
Our proof of Laman's theorem takes this viewpoint
and uses a lemma of Yu from \cite{yu2017algebraic},
saying that tropicalization preserves algebraic matroid structure.
A similar strategy was adopted by the first author in \cite{bernstein2017completion},
wherein he characterized the algebraic matroids underlying the Grassmannian $\gr(2,n)$ of planes in affine $n$-space,
and the determinantal variety of $m\times n$ matrices of rank at most two.
A key ingredient was a result of Speyer and Sturmfels \cite{speyer2004tropical}
describing the tropicalization of $\gr(2,n)$.

Loosely speaking, a graph is said to be \emph{generically rigid in $\rr^d$}
if when its vertices are embedded in $\rr^d$ at generic points
and its edges are treated as rigid struts that are free to move about the vertices,
the resulting structure cannot be continuously deformed.
\emph{Laman's theorem} is an elegant characterization of the graphs that are minimally generically rigid in $\rr^2$,
and such graphs are said to be \emph{Laman}.
In spite of the name, Laman's theorem was originally proved by Hilda Pollaczek-Geiringer in 1927 \cite{pollaczek1927gliederung},
though this was evidently forgotten when Laman rediscovered it in 1970 \cite{laman1970graphs}.
Pollaczek-Geiringer's work on this topic seems only to have resurfaced recently,
so the terms ``Laman's theorem'' and ``Laman graphs'' have become quite deeply embedded in the rigidity theory literature,
and we stick with them in this paper.

Capco, Gallet, Grasegger, Koutschan, Lubbes, and Schicho recently used tropical geometry in \cite{capco2018number}
to compute upper bounds on the number of realizations of a given Laman graph with generic prescribed edge lengths.
Perhaps the most important open problem in rigidity theory is to characterize the
graphs that are minimally generically rigid in $\rr^3$,
and no currently known technique for proving Laman's theorem seems likely to extend.
Therefore, one motivation for our tropical proof is hope that it may one day extend to the $\rr^3$ case.

In addition to being an interesting mathematical subject,
rigidity theory of graphs has diverse applications.
It can be used to discover the structure of molecules \cite{liberti2011molecular}
which is particularly useful when studying proteins \cite{jacobs2001protein,rader2002protein}
and materials at the nano scale \cite{bauchy2014nanoscale,micoulaut2017material}.
Macro-scale applications include
coordinating groups of autonomous vehicles \cite{anderson2008rigid,eren2005merging,eren2002framework,olfati2002graph}
and sensor network localization \cite{eren2004rigidity,zhu2010universal}.

We give the necessary technical background on rigidity theory and tropical geometry in Section~\ref{sec:preliminaries}.
Among other things, we define the \emph{Cayley-Menger variety},
a generalization of the Pollaczek-Geiringer variety, 
which is the Zariski closure of the set of vectors specifying the pairwise squared distances between $n$ points in $\rr^d$.
In Section~\ref{sec:tropicalCayleyMenger1},
we show that the tropicalization of the Cayley-Menger variety in the case $d = 1$
is the space of ultrametrics on $n$ leaves.
We also set some notation that will be used in later sections.
We begin Section~\ref{sec:tropicalLaman} by showing that the tropicalization of the Pollaczek-Geiringer variety
is the Minkowski sum of the set of ultrametrics with itself.
Then, we show that this tropical variety
admits a particular simplicial complex structure.
In Section \ref{sec:tropicalProofOfLaman},
we use our previous results to give a new proof of Laman's theorem.

\section{Preliminaries}\label{sec:preliminaries}

Let $\kk$ be $\rr$ or $\cc$.
Let $S$ be a finite set, and let $X \subseteq \kk^S$ be an irreducible affine variety.
Each $E \subseteq S$ defines a coordinate projection $\pi_E: \kk^S \rightarrow \kk^E$.
The \emph{algebraic matroid underlying $X$}
is the matroid on ground set $S$ whose independent sets are the $E \subseteq S$
such that $\dim \pi_E(X) = |E|$.
To see that this construction yields a matroid,
see e.g.~\cite[Proposition 1.2.9]{bernstein2018matroids}.

A \emph{bar and joint framework} consists of a graph $G = (V,E)$ along with an
injection $\bfp:~V~\rightarrow~\rr^d$.
We denote such a framework by $(V,E,\bfp)$
and say that it is \emph{rigid} if there exists an $\varepsilon > 0$
such that for any other injection $\bfq: V \rightarrow \rr^d$ satisfying
$\sum_{u \in V} \|\bfp(u) - \bfq(u)\|^2 \le \varepsilon$
and $\|\bfq(u) - \bfq(v)\| = \|\bfp(u) - \bfp(v)\|$ for all $uv \in E$,
then the images of $\bfp$ and $\bfq$ are related by a Euclidean isometry of $\rr^d$.
A graph $G = (V,E)$ is said to be \emph{generically rigid in $\rr^d$}
if every framework $(V,E,\bfp)$ is rigid when $\bfp$ is generic.
We will identify injections $\bfp: V \rightarrow \rr^d$
with point configurations in $(\rr^d)^{|V|}$.

The \emph{Cayley-Menger variety of $n$ points in $\rr^d$},
denoted $\cm_n^d$, is the affine variety
embedded in $\cc^{\binom{[n]}{2}}$ given as the Zariski closure of the
set of pairwise squared euclidean distances between
$n$ points in $\rr^d$.
When $d = 2$,
we will call the corresponding Cayley-Menger variety $\cm_n^2$ the \emph{Pollaczek-Geiringer variety}.
The following lemma gives three folklore results,
the first of which is called \emph{Laman's condition} in \cite{graver2008combinatorial}.
They are well-known, but we give proofs as they are not generally phrased in our algebraic-geometric language.

\begin{lemma}\label{lemma:indSetCM}
    Let $n \ge d$ and let $E \subseteq \binom{[n]}{d}$. Then:
    \begin{enumerate}
        \item if $E$ is independent in the algebraic matroid of $\cm_n^d$,
        then for all $V \subseteq [n]$ with $|V| \ge d$,
        the induced subgraph of $([n],E)$ on vertex set $V$ has at most $d|V| - \binom{d+1}{2}$ edges,
        \item the dimension of the Cayley-Menger variety $\cm_n^d$ is $dn - \binom{d+1}{2}$, and
        \item the graph $([n],E)$ is generically rigid if and only if $E$ is spanning in the algebraic matroid underlying $\cm_n^d$.
    \end{enumerate}
\end{lemma}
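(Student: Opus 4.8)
The plan is to prove part~(2) first and then derive parts~(1) and~(3) from it. Let $\phi_n\colon\cc^{dn}\to\cc^{\binom{[n]}{2}}$ be the polynomial map sending a point configuration $(p_1,\dots,p_n)$ to its vector of squared distances $\bigl(\sum_{k=1}^{d}(p_{ik}-p_{jk})^2\bigr)_{i<j}$, so that $\cm_n^d$ is the Zariski closure of its image; in characteristic zero, $\dim\cm_n^d$ equals the rank of the Jacobian of $\phi_n$ at a generic point. Up to the scalar $2$ this Jacobian is the \emph{rigidity matrix} $R(K_n,p)$, whose row indexed by a pair $ij$ has $p_i-p_j$ in the columns for vertex $i$, $p_j-p_i$ in those for vertex $j$, and zeros elsewhere. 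The kernel of $R(K_n,p)$ always contains the space $T_p$ of infinitesimal isometries $v=(Ap_i+b)_i$ with $A$ skew-symmetric and $b\in\rr^d$, and $\dim T_p=\binom{d+1}{2}$ for generic $p$ once $n\ge d$ — for smaller configurations one would have to discard the $\binom{d-k}{2}$ rotations fixing the affine span, where $k=\min(n-1,d)$ is its dimension, but this correction vanishes as soon as $n\ge d$. This already gives $\dim\cm_n^d\le dn-\binom{d+1}{2}$. For the reverse inequality I will show that $\ker R(K_n,p)=T_p$ for generic $p$. In the base cases $n\in\{d,d+1\}$ the target $\cc^{\binom{[n]}{2}}$ has dimension exactly $dn-\binom{d+1}{2}$ and $\phi_n$ is dominant there, since a generic simplex can be perturbed to realize any nearby vector of edge lengths; hence $\dim\ker R(K_n,p)=\binom{d+1}{2}=\dim T_p$. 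For $n\ge d+2$, given an infinitesimal flex $v\in\ker R(K_n,p)$, its restriction to $d+1$ of the vertices lies in the kernel of the rigidity matrix of $K_{d+1}$, which by the base case equals the space of trivial flexes; subtracting a global trivial flex we may assume $v$ vanishes on those $d+1$ vertices, and then for every other vertex $i$ the velocity $v_i$ is orthogonal to all $d+1$ vectors $p_i-p_j$ with $j$ among those vertices. These vectors span $\rr^d$, so $v_i=0$; thus $v\in T_p$, completing the proof of part~(2).

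For part~(1), suppose $E$ is independent in the algebraic matroid of $\cm_n^d$ and let $V\subseteq[n]$ with $|V|\ge d$. Set $E_V=E\cap\binom{V}{2}$. As a subset of an independent set, $E_V$ is independent, so $\dim\pi_{E_V}(\cm_n^d)=|E_V|$. On the other hand $\pi_{E_V}\circ\phi_n$ depends only on the positions of the vertices in $V$, so $\pi_{E_V}(\cm_n^d)$ and $\pi_{E_V}(\cm_{|V|}^d)$ have the same Zariski closure; therefore
\[
|E_V|=\dim\pi_{E_V}(\cm_n^d)\le\dim\cm_{|V|}^d=d|V|-\binom{d+1}{2}
\]
by part~(2). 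Since $E_V$ is precisely the edge set of the induced subgraph of $([n],E)$ on $V$, this is the desired inequality.

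For part~(3), write $R(G,p)$ for the submatrix of $R(K_n,p)$ consisting of the rows indexed by $E$; exactly as in part~(2), $\dim\pi_E(\cm_n^d)=\rank R(G,p)$ for generic $p$. Since the algebraic matroid of an irreducible variety has rank equal to its dimension, part~(2) gives that the matroid of $\cm_n^d$ has rank $dn-\binom{d+1}{2}$, so $E$ is spanning if and only if $\rank R(G,p)=dn-\binom{d+1}{2}$ for generic $p$, if and only if $\dim\ker R(G,p)=\binom{d+1}{2}$, if and only if $\ker R(G,p)=T_p$ (the inclusion $T_p\subseteq\ker R(G,p)$ always holds and $\dim T_p=\binom{d+1}{2}$ generically by part~(2)), that is, if and only if $G=([n],E)$ is generically infinitesimally rigid. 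Finally, by the theorem of Asimow and Roth a framework at a regular point is rigid if and only if it is infinitesimally rigid, and generic configurations are regular points; hence $G$ is generically rigid if and only if it is generically infinitesimally rigid, which closes the chain of equivalences.

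The main obstacle is the lower bound in part~(2) — equivalently, the generic infinitesimal rigidity of $K_n$ in $\rr^d$. Once this, together with the Asimow--Roth equivalence of rigidity and infinitesimal rigidity at generic configurations, is available, parts~(1) and~(3) follow by routine manipulation of Jacobians and coordinate projections. A secondary nuisance is keeping the two regimes $n=d$ and $n\ge d+1$ separate, since a generic configuration affinely spans a hyperplane in the former and all of $\rr^d$ in the latter; fortunately the space of trivial flexes has dimension $\binom{d+1}{2}$ in both.
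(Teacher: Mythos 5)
Your proof is correct, but it reaches parts (2) and (3) by a genuinely different route than the paper. For part (2), the paper computes $\dim\cm_n^d = dn - \dim(\phi^{-1}(\phi(\bfp)))$, identifies the generic fiber with the orbit of $\bfp$ under the Euclidean isometry group $E(d)$, shows the stabilizer is finite, and invokes a Lie-group theorem to conclude the fiber has dimension $\dim E(d) = \binom{d+1}{2}$; you instead work infinitesimally, identifying the Jacobian of $\phi$ with the rigidity matrix and proving by induction on $n$ (pinning down a flex on $d+1$ generic vertices and then forcing the rest to vanish) that $\ker R(K_n,p) = T_p$ generically. The two computations are of course dual — one measures the orbit, the other its tangent space — but yours is the classical ``generic infinitesimal rigidity of $K_n$'' argument from rigidity theory, while the paper's is shorter by delegating to orbit-stabilizer machinery. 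For part (3), the contrast is sharper: the paper gives a direct real-geometry argument that spanning implies finitely many $E(d)$-orbits over a generic fiber, then takes $\varepsilon$ half the minimum inter-orbit distance to verify rigidity (and only explicitly spells out this one direction), whereas you route through generic infinitesimal rigidity and invoke Asimow--Roth to convert back to rigidity. Your version buys a cleanly two-sided equivalence at the cost of quoting Asimow--Roth as a black box; the paper's avoids that citation but is less explicit about the converse. One minor soft spot in your write-up: the base case ``$\phi_n$ is dominant for $n\in\{d,d+1\}$'' is asserted with only an informal perturbation heuristic — it is a true and standard fact (equivalently, there are no Cayley--Menger relations until $n\geq d+2$), but a referee would want either a citation or a one-line rank computation at an explicit configuration. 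Part (1) is identical in both treatments.
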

\begin{proof}
    The first statement follows from the second
    by the observation that the coordinate projection of $\cm_{n}^d$
    onto the coordinates indexed by $\binom{V}{2}$ is $\cm_{|V|}^d$.

    We now prove the second statement. Let $\phi: (\rr^{d})^n \rightarrow \rr^{\binom{[n]}{2}}$ be the map
	sending a configuration of $n$ points in $\rr^d$ to the set
	of pairwise squared distances among them.
	Then $\cm_{n}^d$ is the Zariski closure of the image of $\phi$.
    The map $\phi$ is algebraic, so
	 \[ \dim (\cm_{n}^d) = \dim((\rr^{d})^n) - \dim(\phi^{-1}(\phi(\bfp))), \]
    where $\bfp$ is a generic point configuration in $(\rr^{d})^n$.
	 
    Let $E(d)$ denote the group of euclidean isometries of $\rr^d$.  Fiber $\phi^{-1}(\phi(\bfp))$ is equal to the $E(d)$-orbit of $\bfp$.  Since $\bfp$ was chosen generically, the points affinely span $\rr^d$ if $n > d$, and so the only element of $E(d)$ that stabilizes $\bfp$ is the identity transformation.  If $n = d$ then the points in $\bfp$ affinely span a hyperplane $H \subseteq \rr^d$, and the only two elements of $E(d)$ that stabilize $\bfp$ are the identity and the reflection across $H$.  It follows from \cite[Theorem 9.24]{lee2003introduction}
	that $\dim(\phi^{-1}(\phi(\bfp)))=\dim(E(d))$.
    To see that $\dim(E(d)) = \binom{d+1}{2}$,
    note that each translation is specified by $d$ independent parameters,
    and each rotation or reflection is specified by $\binom{d}{2}$ independent parameters.

    We now prove the third statement.
    Note that $E$ is spanning in $\cm_n^d$ if and only if $\dim(\pi_E(\cm_n^d)) = \dim(\cm_n^d)$.
    Equivalently, for a generic point configuration $\bfp \in (\rr^{d})^n$,
    the set $\pi_E^{-1}(\pi_E(\phi(\bfp)))$ is zero-dimensional, i.e.~a finite set.
    Thus $\phi^{-1}(\pi_E^{-1}(\pi_E(\phi(\bfp))))$ consists of finitely many orbits of $E(d)$'s diagonal action on $(\rr^d)^n$.
    Taking $\varepsilon$ to be half the minimum distance between any two such orbits,
    we see that the framework $([n],E,\bfp)$ is rigid.
\end{proof}

For $d = 1$ and $d = 2$,
the necessary condition from Lemma \ref{lemma:indSetCM} 
for independence is known to be sufficient.
The $d = 1$ case is trivial and the $d = 2$ case is known as \emph{Laman's Theorem}.

What follows is a very brief introduction to tropical geometry.
The theory of tropical geometry can be developed using either the \emph{max convention} or \emph{min convention}.
Both give exactly the same theorems, modulo some sign changes and the substitution of ``maximum'' with ``minimum'' or vice versa.
One often chooses the convention that minimizes the number of negative signs that appear.
In this paper, that happens to be the max convention so that is what we choose.
See \cite{maclagan2015introduction} for a more detailed introduction to tropical geometry (but note that it is written in the min convention).

A \emph{valuation} on a field $K$ is a function $\val: K \to \rr \cup \{-\infty\}$ satisfying:
 \begin{enumerate}
  \item\label{val:zero} $\val(a) = -\infty$ if and only if $a = 0$,
  \item\label{val:mult} $\val(ab) = \val(a) + \val(b)$,
  \item\label{val:add} $\val(a + b) \leq \max\{\val(a),\val(b)\}$ with equality if $\val(a) \neq \val(b)$.
 \end{enumerate}
One should think of $\val(a)$ as a measure of the magnitude of $a \in K$ that behaves roughly like a logarithm, as reflected by rules (\ref{val:zero}) and (\ref{val:mult}).  If $a \in K$ has smaller valuation than $b \in K$, then $a$ should be considered insignificant compared to $b$ so $\val(a+b) = \val(b)$.  On the other hand, if $a$ and $b$ have the same valuation, adding them may cancel the largest magnitude components of each, so $\val(a+b) \leq \val(b)$, as described by rule (\ref{val:add}).
The pair $(K,\val)$ is called a \emph{valuated field}.

For our purposes, we require a valuated field $K$ that extends $\cc$, is algebraically closed, and with valuation that maps densely into $\rr\cup \{-\infty\}$.  Therefore we will take $K = \puis$, the field of complex Puiseux series.  The elements of $\puis$ are formal series in indeterminant $t$ of the form
 \[ a = \sum_{i = m}^\infty c_i t^{i/k} \]
for some integer $m$ and some positive integer $k$, with each $c_i$ in $\cc$ and $c_m \neq 0$.  The valuation is defined by $\val(a) = -m/k$, the negative of the smallest exponent of $t$.

For $Y$ an algebraic variety in $K^S$, the {\em tropicalization}, $\trop(Y)$, of $Y$ is the closure in the Euclidean topology of the image of $Y$ under the map $\val$,
\[ \trop(Y) := \overline{\{\val(x) : x \in Y\}} \cap \rr^S. \]
Note that we discard the points with coordinates $-\infty$.

To tropicalize an algebraic variety $X \subseteq \cc^S$ with defining ideal $I \subseteq \cc[x_i: i \in S]$, we extend scalars from $\cc$ to the valuated field $K$ to get a variety in $K^S$.
Let $X' \subseteq K^S$ denote the vanishing set of ideal $IK[x_i : i \in S]$.
We then define $\trop(X)$ to be equal to $\trop(X')$.  

In this case, $\trop(X)$ is a pure polyhedral fan of the same dimension as $X$ \cite{bogart2007computing}.  By studying $\trop(X)$, we can apply tools from polyhedral geometry and combinatorics to questions about $X$.  Of particular interest for this paper, the following lemma tells us that tropicalization preserves the algebraic matroid structure.

\begin{lemma}[{\cite[Lemma 2]{yu2017algebraic}}]\label{lemma:tropicalPreservesAlgebraicMatroid}
    Let $X \subseteq \cc^S$ be an irreducible variety and let $E \subseteq S$.
    Then the projection of $X$ to $\cc^E$ has the same dimension as the projection of $\trop(X)$
    to $\rr^E$.
\end{lemma}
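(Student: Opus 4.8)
The plan is to deduce the lemma from the dimension-preservation property of tropicalization recorded above (\cite{bogart2007computing}), by first proving that tropicalization commutes with coordinate projection. Write $\pi = \pi_E\colon \cc^S \to \cc^E$, let $Y = \overline{\pi(X)}$ be its Zariski closure (an irreducible variety over $\cc$), and let $X'$ and $Y'$ denote the base changes of $X$ and $Y$ to $K = \puis$. Since $\trop(X)$ is a polyhedral complex and linear images of polyhedra are again polyhedra, $\pi(\trop(X)) \subseteq \rr^E$ is a closed polyhedral set. The key claim is that
\[ \pi(\trop(X)) = \trop(Y). \]
Granting this, $\dim \pi(\trop(X)) = \dim \trop(Y) = \dim Y = \dim \pi(X)$: the middle equality is dimension-preservation, and the last holds because Zariski closure preserves dimension. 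This is exactly the assertion of the lemma. Throughout I may assume $X$ lies in no coordinate hyperplane — otherwise some coordinate is identically zero on $X$, hence a loop of both matroids, and may be deleted — so that $X$ meets $(\cc^*)^S$ and $Y$ meets $(\cc^*)^E$.

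For the inclusion $\pi(\trop(X)) \subseteq \trop(Y)$: let $w \in \trop(X) \subseteq \rr^S$, so there are points $x_n \in X'$ with $\val(x_n) \to w$, and since every coordinate of $w$ is finite I may assume $x_n \in (K^*)^S$ for all $n$. The ideal of $Y$ is the elimination ideal $I \cap \cc[x_i : i \in E]$, where $I$ is the ideal of $X$; its elements are polynomials in the variables indexed by $E$ that lie in $I$, hence vanish on each $\pi(x_n)$, so $\pi(x_n) \in Y'$. Because $\val$ commutes with coordinate projections, $\pi(w) = \lim \pi(\val(x_n)) = \lim \val(\pi(x_n))$, which exhibits the point $\pi(w) \in \rr^E$ as a limit of valuations of points of $Y'$; hence $\pi(w) \in \trop(Y)$.

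For the reverse inclusion I would use Chevalley's theorem to pick a Zariski-dense open $U \subseteq Y$ with $U \subseteq (\cc^*)^E$ and $U \subseteq \pi(X \cap (\cc^*)^S)$. From $Y' = U' \cup (Y \setminus U)'$ one gets $\trop(Y) = \trop(U) \cup \trop(Y \setminus U)$, and since $\trop(Y \setminus U)$ has dimension $\dim(Y \setminus U) < \dim Y$ while $\trop(Y)$ is pure of dimension $\dim Y$, it follows that $\trop(U) = \trop(Y)$; moreover $U \subseteq (\cc^*)^E$ makes $\trop(U)$ the Euclidean closure of $\{\val(u) : u \in U'\}$. So it is enough to show $\val(u) \in \pi(\trop(X))$ for every $u \in U'$. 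Fixing such a $u$: it lies over a point of $U$, which is in the image of the dominant morphism $\pi|_{X \cap (\cc^*)^S}$, so after base change the fiber over $u$ is a nonempty finite-type $K$-scheme; as $K = \puis$ is algebraically closed this fiber has a $K$-point, i.e.~some $x \in X' \cap (K^*)^S$ with $\pi(x) = u$. Then $\val(x) \in \trop(X)$ and $\val(u) = \pi(\val(x)) \in \pi(\trop(X))$. Since $\pi(\trop(X))$ is closed, this gives $\trop(Y) = \trop(U) \subseteq \pi(\trop(X))$, completing the claim.

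I expect the reverse inclusion to be the main obstacle. It forces one to reconcile the Euclidean closure built into the definition of tropicalization with the purely set-theoretic image $\pi(X')$, which is only constructible rather than Zariski closed; the content is that valuations of points of $\pi(X')$ are nonetheless Euclidean-dense in $\trop(Y)$. That is where the structure theorem is genuinely needed — to know $\trop(Y)$ is pure-dimensional, so that the lower-dimensional contribution from $Y \setminus U$ cannot fill it — together with the algebraic closedness of $\puis$, which is what lets one lift $K$-points through the dominant projection $\pi|_X$. The remaining steps, handling valuations and elimination ideals, are routine.
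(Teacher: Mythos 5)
The paper does not supply a proof of this lemma; it is cited directly from Yu~\cite[Lemma 2]{yu2017algebraic}, so there is no in-paper argument to compare against. That said, your proposal is essentially correct, and your self-diagnosis is accurate: the whole content is the identity $\pi_E(\trop(X)) = \trop\bigl(\overline{\pi_E(X)}\bigr)$, from which the dimension claim follows because tropicalization preserves dimension. The thing to notice, though, is that this identity is exactly the Sturmfels--Tevelev elimination theorem (Theorem~\ref{thm:monomialMapTropicalVariety}, which the paper already quotes) specialized to the coordinate projection $\pi_E$: a coordinate projection is a monomial map whose matrix is the $0$--$1$ selection matrix picking out the coordinates in $E$. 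So your second and third paragraphs re-derive a special case of a result the paper already has on hand, and citing it reduces the whole thing to one line: $\dim \pi_E(X) = \dim \overline{\pi_E(X)} = \dim\trop\bigl(\overline{\pi_E(X)}\bigr) = \dim \pi_E(\trop(X))$, the middle equality being \cite{bogart2007computing} and the last being \cite{sturmfels2007elimination}.

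If you do want a self-contained proof of the identity, your reverse-inclusion argument is the right shape but glosses a base-change point. In passing from ``$U \subseteq \pi_E(X\cap(\cc^*)^S)$ over $\cc$'' to ``the fiber of $\pi_E$ over a $K$-point $u$ of $U'$ is nonempty,'' you implicitly use that formation of the constructible image commutes with the field extension $\cc\hookrightarrow K$, together with the Jacobson property of $\cc$-varieties to upgrade from closed points to all scheme points of $U$. This is true but deserves a sentence; it is cleaner to run the whole argument over $K$ from the start. Since $\cc$ is algebraically closed, $X' = X\times_\cc K$ and $Y' = Y\times_\cc K$ are irreducible $K$-varieties, so one can apply Chevalley directly to $\pi_E\colon X'\cap(K^*)^S\to Y'$ to get a dense open $W\subseteq Y'$ inside the image, and then lift $K$-points of $W$ using only the algebraic closedness of $K$. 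With that change there is nothing to reconcile between $\cc$-points and $K$-points, and the rest of your argument (purity of $\trop(Y')$ to discard $\trop(Y'\setminus W)$, the elementary forward inclusion, closedness of $\pi_E(\trop(X))$ as a finite union of images of polyhedra) goes through unchanged.
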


We now review the results from the literature that we will need to obtain our combinatorial description of $\trop(\cm_n^2)$.
Recall that a monomial map $\alpha: \cc^n \rightarrow \cc^d$ is an algebraic map with the property that each coordinate of the image is given by a monomial in the coordinates of $\cc^n$.  Such a map can be represented by a matrix $A$
where the $i^{\rm th}$ column of $A$ is the exponent vector of the $i^{\rm th}$ coordinate of $\alpha$.



\begin{thm}[\cite{sturmfels2007elimination}, Theorem 1.1]\label{thm:monomialMapTropicalVariety}
    Let $A \in \zz^{n\times d}$ be an integer matrix representing a monomial map $\alpha: \cc^n\rightarrow \cc^d$
    and let $X \in \cc^n$ be a variety.
    Then $\trop(\alpha(X)) = A \trop(X)$.
\end{thm}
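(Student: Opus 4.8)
The plan is to pass to the Puiseux series field $K = \puis$ and to exploit the fact that applying $\val$ coordinatewise turns a monomial map into a linear map. Write $\alpha\colon (K^\ast)^n \to (K^\ast)^d$ for the base change of the monomial map to $K$ (a monomial map with possibly negative exponents is only defined on the torus, so we may as well restrict $X$ to a torus orbit and assume that $X$ meets $(K^\ast)^n$), and continue to write $X$ for the base-changed variety. The basic identity is that
\[
\val(\alpha(x)) \;=\; A\,\val(x) \qquad \text{for all } x \in X \cap (K^\ast)^n,
\]
where on the right $A$ denotes the linear map $\rr^n \to \rr^d$ induced by the integer matrix $A$; this is immediate from multiplicativity of $\val$. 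In other words, $\val$ intertwines the monomial map $\alpha$ with the linear map $A$.

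For the inclusion $A\,\trop(X) \subseteq \trop(\alpha(X))$: by definition $\trop(X)$ is the Euclidean closure of $\val(X \cap (K^\ast)^n)$, so, using continuity of the linear map $A$ together with the basic identity,
\[
A\,\trop(X) \;\subseteq\; \overline{A\,\val(X \cap (K^\ast)^n)} \;=\; \overline{\val(\alpha(X\cap (K^\ast)^n))} \;\subseteq\; \trop(\overline{\alpha(X)}) \;=\; \trop(\alpha(X)),
\]
the last step because $\trop$ is taken on the Zariski closure and $\alpha(X\cap(K^\ast)^n) \subseteq \overline{\alpha(X)}$. Moreover $A\,\trop(X)$ is a finite union of polyhedral cones, hence already closed, since $\trop(X)$ is a polyhedral fan.

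For the reverse inclusion $\trop(\alpha(X)) \subseteq A\,\trop(X)$, the key point is that the tropicalization of an irreducible variety does not change when one passes to a Zariski-dense open subvariety. By Chevalley's theorem, $\alpha(X \cap (K^\ast)^n)$ is constructible, and it is Zariski-dense in $Y := \overline{\alpha(X)}$ (because $X\cap(K^\ast)^n$ is dense in the irreducible $X$ and $\alpha$ is a morphism), so it contains a Zariski-dense open subset $U$ of $Y$; put $T := (K^\ast)^d$. Then $Y \setminus U$ lies in a proper closed subvariety $Z$, and $\trop(Y\cap Z)$ has dimension less than $\dim Y = \dim \trop(Y)$ since tropical varieties are pure of the dimension of the variety. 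As $\val(Y\cap T) = \val(U\cap T)\cup \val(Y\cap Z\cap T)$, taking Euclidean closures gives $\trop(Y) = \overline{\val(U\cap T)}\cup\overline{\val(Y\cap Z\cap T)}$; the second set is too low-dimensional to contain a full-dimensional subset of any maximal cone of $\trop(Y)$, so the first set contains a dense subset of each such cone and, being closed, equals $\trop(Y)$. Finally $U\cap T \subseteq \alpha(X\cap (K^\ast)^n)$, so by the basic identity $\val(U\cap T) \subseteq A\,\val(X\cap(K^\ast)^n) \subseteq A\,\trop(X)$, and since $A\,\trop(X)$ is closed we conclude $\trop(\alpha(X)) = \trop(Y) = \overline{\val(U\cap T)} \subseteq A\,\trop(X)$.

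The main obstacle is this reverse inclusion, and within it the passage from the image $\alpha(X)$ to its Zariski closure: one must ensure that the valuations of the points genuinely hit by $\alpha$ are already dense in the full tropicalization of $\overline{\alpha(X)}$, which is what the Chevalley-plus-dimension-counting argument above secures. The remaining care goes into the edge cases where $X$ fails to meet the torus or where $\alpha$ is only defined on the torus; the two containments themselves are formal once the basic identity and the dense-open fact are in hand.
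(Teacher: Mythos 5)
This is a cited result, not one the paper proves (it is Theorem~1.1 of Sturmfels and Tevelev's \emph{Elimination theory for tropical varieties}), so there is no in-paper proof to compare yours against. Evaluated on its own merits, your argument is correct and reconstructs what is essentially the standard proof of the set-theoretic statement. The identity $\val(\alpha(x)) = A\val(x)$ is the crux, and the inclusion $A\trop(X) \subseteq \trop(\alpha(X))$ is immediate from it together with the closedness of $A\trop(X)$. You correctly isolate the real difficulty in the reverse inclusion as the discrepancy between $\alpha(X)$ and its Zariski closure $Y$, and your resolution is sound: Chevalley gives a dense open $U \subseteq \alpha(X\cap(K^*)^n)$ of $Y$; the closed complement has strictly smaller dimension; and purity plus dimension-preservation of tropicalization (the structure theorem, cited in the paper via \cite{bogart2007computing}) forces $\overline{\val(U\cap T)}$ to contain a dense subset of, and hence all of, every maximal cone of $\trop(Y)$. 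The only implicit assumption is that $X$ is irreducible (needed both for the density of $X\cap(K^*)^n$ and for the dimension drop on $Y\setminus U$); this costs nothing, since the theorem reduces to irreducible components because $\trop$, $\alpha$, and linear images of polyhedral complexes all commute with finite unions. Finally, note the theorem as printed has $A\in\zz^{n\times d}$ with $\trop(X)\subseteq\rr^n$, so $A\trop(X)$ does not literally typecheck; your reading of ``$A$'' as the induced linear map $\rr^n\to\rr^d$ (left multiplication by $A^T$) is the intended one.
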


We denote the coordinates of points $\delta \in \rr^{\binom{[n]}{2}}$ by $\delta_{uv}$ where $u < v$.
We say that $\delta \in \rr^{\binom{[n]}{2}}$ is an \emph{ultrametric} if
$\delta_{uv}~\le~\max\{\delta_{uw},\delta_{vw}\}$ for all triples $u,v,w$ of distinct elements of $[n]$.
Note that we do not require nonnegativity of any coordinates.

We now recall the well-known way that ultrametrics can be represented on rooted trees (see e.g. \cite[Chapter 7]{semple2003phylogenetics}).
Given a rooted tree $T$ with leaves labeled by $[n]$,
the most recent common ancestor of a pair of leaves $u,v \in [n]$
is the unique internal node in the unique path in $T$
from $u$ to $v$ that is closest to the root in the graph-theoretic distance.
Given an ultrametric $\delta$ on $[n]$,
there exists a unique tree $T$,
whose internal nodes are assigned real-valued weights that increase along any path towards the root,
such that $\delta(u,v)$ is the weight assigned to the most recent common ancestor of $u$ and $v$.
Given an ultrametric $\delta$, the associated tree $T$ (disregarding the weights on the internal vertices) is called the {\em topology} of $\delta$.
See Figure \ref{fig:ultrametricExample} for an example
displaying the ultrametric $(\delta_{12},\delta_{13},\delta_{14},\delta_{23},\delta_{24},\delta_{34}) = (-2,1,4,1,4,4)$ on its topology.
We denote the set of all ultrametrics in $\rr^{\binom{[n]}{2}}$ by $U_n$.

Now, let $\sqrt{\cm^1_n} \subseteq \rr^{\binom{[n]}{2}}$ denote the linear space
parameterized by $\delta_{uv} = x_u-x_v$.
Our results all rest on the following theorem of Ardila and Klivans.

\begin{thm}[\cite{ardila2006bergman}, Theorem 3]\label{thm:ardilaKlivansBergmanFan}
    The tropicalization of the linear space $\sqrt{\cm^1_n}$ is the set of ultrametrics on $[n]$.
    That is, $\trop(\sqrt{\cm^1_n}) = U_n$.
\end{thm}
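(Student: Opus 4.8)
The plan is to prove the two inclusions $\trop(\sqrt{\cm^1_n}) \subseteq U_n$ and $U_n \subseteq \trop(\sqrt{\cm^1_n})$ separately. The first is essentially formal. Since $\sqrt{\cm^1_n}$ is parameterized by $\delta_{uv} = x_u - x_v$, it is contained in the hyperplane $\delta_{uw} = \delta_{uv} + \delta_{vw}$ for all $u < v < w$ (and these hyperplanes in fact cut it out, since the parameterizing map $x \mapsto (x_u - x_v)_{u<v}$ has one-dimensional kernel), and the same remains true after extending scalars to $\puis$. Hence $\trop(\sqrt{\cm^1_n})$ is contained in the tropical hypersurface of each linear form $\delta_{uw} - \delta_{uv} - \delta_{vw}$; since the coefficients $\pm 1$ have valuation $0$, that tropical hypersurface is the set of $\delta$ for which the maximum of $\delta_{uv}, \delta_{uw}, \delta_{vw}$ is attained at least twice. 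Intersecting over all triples, every $\delta \in \trop(\sqrt{\cm^1_n})$ has the property that for each three-element subset $\{u,v,w\}$ the largest of $\delta_{uv}, \delta_{uw}, \delta_{vw}$ is attained at least twice, and an elementary argument shows this is equivalent to $\delta$ being an ultrametric.

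For the reverse inclusion, which is the main obstacle, I would argue as follows. Since $\trop(\sqrt{\cm^1_n})$ is closed and the ultrametrics with rational coordinates are dense in $U_n$, it suffices to lift each rational ultrametric $\delta$ to a point of $\sqrt{\cm^1_n}$ over $\puis$ whose coordinatewise valuation is $\delta$. Let $T$ be the rooted tree representing $\delta$, with internal-node weights $\lambda(\cdot)$ that increase toward the root. For a leaf $u$ and an internal node $a$ on the path in $T$ from $u$ to the root, write $b(a,u)$ for the child of $a$ through which that path passes. Choose, for each internal node $a$, complex numbers $c_{a,b}$ indexed by the children $b$ of $a$, subject only to the requirement that distinct children of a common node receive distinct values, and set
\[ x_u \;=\; \sum_{a} c_{a,\, b(a,u)}\, t^{-\lambda(a)} \;\in\; \puis, \]
the sum over the internal nodes $a$ on the path from $u$ to the root (rationality of the weights makes each $t^{-\lambda(a)}$ a genuine Puiseux monomial). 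Then $(x_u - x_v)_{u<v}$ is a $\puis$-point of $\sqrt{\cm^1_n}$, and the claim is that $\val(x_u - x_v) = \delta_{uv}$ for all $u < v$. To see this, let $a^{*}$ be the most recent common ancestor of $u$ and $v$, so $\delta_{uv} = \lambda(a^{*})$: the terms indexed by internal nodes strictly closer to the root than $a^{*}$ use the same child for $u$ and for $v$, and so cancel in $x_u - x_v$; each remaining term other than the one at $a^{*}$ is indexed by an internal node lying strictly below $a^{*}$, hence has valuation $< \lambda(a^{*})$; and at $a^{*}$ the two paths use distinct children, so the contribution $(c_{a^{*},\, b(a^{*},u)} - c_{a^{*},\, b(a^{*},v)})\, t^{-\lambda(a^{*})}$ has nonzero coefficient. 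Thus $x_u - x_v$ has a unique term of maximal valuation, namely $\lambda(a^{*})$, and so $\val(x_u - x_v) = \delta_{uv}$. Passing to closures then gives $U_n \subseteq \trop(\sqrt{\cm^1_n})$.

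The genuinely delicate point is the design of this lift: all $\binom{n}{2}$ valuations must come out right simultaneously, and the rule that distinct children of a node get distinct coefficients is exactly what keeps the leading term of each difference $x_u - x_v$ from cancelling. As a consistency check, $\trop(\sqrt{\cm^1_n})$ and $U_n$ are both pure of dimension $n-1$ --- the former since the tropicalization of a variety is pure of the same dimension and $\dim\sqrt{\cm^1_n} = n-1$, the latter since a rooted binary tree on $n$ leaves has $n-1$ internal nodes --- so the first inclusion is already an equality of top-dimensional polyhedral fans; but the explicit lift is the route I would take in order to make the argument self-contained.
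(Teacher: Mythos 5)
Your proof is correct, though the paper does not prove this statement itself; it cites Ardila and Klivans, whose argument is structural: they identify the Bergman fan of the graphic matroid of $K_n$ with the order complex of the partition lattice and then recognize that complex as the space of ultrametrics encoded by rooted trees. Your route is more elementary and self-contained: you apply the standard two-inclusion template directly to this linear space. The forward inclusion via the relations $\delta_{uw} = \delta_{uv} + \delta_{vw}$ and the observation that ``max attained at least twice on every triple'' is equivalent to the ultrametric inequality is correct. The substance is the explicit lift
\[
x_u \;=\; \sum_{a}\, c_{a,\,b(a,u)}\, t^{-\lambda(a)},
\]
and your verification holds up: terms indexed by ancestors strictly above the most recent common ancestor $a^*$ cancel, terms strictly below $a^*$ have valuation strictly less than $\lambda(a^*)$ because the internal weights strictly increase toward the root, and the coefficient at $a^*$ is nonzero precisely because distinct children of each node receive distinct scalars; density of rational ultrametrics then finishes the reverse inclusion. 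The trade-off between the two approaches is that the Ardila--Klivans matroid-theoretic proof delivers the simplicial fan structure of Theorem~\ref{thm:simplicialComplex1} for free (it is built into the nested-set identification), whereas your direct lift establishes the equality of sets but would still need an argument along the lines of Lemma~\ref{lemma:ultrametricConeGens} to recover the polyhedral structure.
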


\begin{figure}[h!]
    \begin{tikzpicture}
        \node [below] (a) at (0,0){$1$};
        \node [below] (b) at (1,0){$2$};
        \node [below] (c) at (2,0){$3$};
        \node [below] (d) at (3,0){$4$};
        \node (ab) at (1/2-0.3,1/2+0.2){$-2$};
        \node (abc) at (1-0.2,1+0.2){$1$};
        \node (abcd) at (3/2-0.2,3/2+0.2){$4$};
        \draw (0,0) -- (1/2,1/2);
        \draw (1,0) -- (1/2,1/2);
        \draw (1/2,1/2) -- (1,1);
        \draw (2,0) -- (1,1);
        \draw (3,0) -- (3/2,3/2);
        \draw (1/2,1/2) -- (3/2,3/2);
        \node at (7,1){$\bordermatrix{
            & 12 & 123 & 1234\cr
            12 & 1 & 0 & 0\cr
            13 & 0 & 1 & 0\cr
            14 & 0 & 0 & 1\cr
            23 & 0 & 1 & 0\cr
            24 & 0 & 0 & 1\cr
            34 & 0 & 0 & 1
        }$};
    \end{tikzpicture}
    \caption{On the left is a rooted tree displaying the ultrametric $(-2,1,4,1,4,4)$.
    Letting $T$ denote the topology of this tree, the corresponding matrix $M_T$ is displayed on the right.
    Its columns span the linear hull of the set of ultrametrics whose topology is $T$.}\label{fig:ultrametricExample}
\end{figure}

\section{The tropical Cayley-Menger variety in dimension 1}\label{sec:tropicalCayleyMenger1}

\begin{prop}\label{prop:CM1}
    The tropicalization of the Cayley-Menger variety of $n$ points in $\rr^1$ is the set of ultrametrics on $[n]$.
    That is, $\trop(\cm^1_n) = U_n$.
\end{prop}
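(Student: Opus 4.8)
The plan is to realize $\cm^1_n$ as the image of the linear space $\sqrt{\cm^1_n}$ under a coordinatewise squaring map, and then apply Theorem~\ref{thm:monomialMapTropicalVariety} together with Theorem~\ref{thm:ardilaKlivansBergmanFan}. Concretely, let $s\colon \cc^{\binom{[n]}{2}} \to \cc^{\binom{[n]}{2}}$ be the map whose $uv$-coordinate is $\delta \mapsto \delta_{uv}^2$. The first step is to check that $\cm^1_n$ is the Zariski closure of $s(\sqrt{\cm^1_n})$. The space $\sqrt{\cm^1_n}$ is parameterized by $x \mapsto (x_u - x_v)_{u<v}$, so composing this parameterization with $s$ gives exactly $x \mapsto ((x_u - x_v)^2)_{u<v}$, whose image over $\rr^n$ has Zariski closure $\cm^1_n$ by definition; since $\rr^n$ is Zariski dense in $\cc^n$ and the map is polynomial, the complex image has the same Zariski closure, so $\cm^1_n = \overline{s(\sqrt{\cm^1_n})}$.

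The second step is to observe that $s$ is a monomial map, represented by the matrix $2I$, where $I$ is the $\binom{n}{2}\times\binom{n}{2}$ identity matrix (each output coordinate is a monomial of degree $2$ in a single input coordinate). Applying Theorem~\ref{thm:monomialMapTropicalVariety} with $A = 2I$ and $X = \sqrt{\cm^1_n}$, and using that tropicalization of a constructible set agrees with that of its Zariski closure, we get
\[
\trop(\cm^1_n) = \trop\bigl(s(\sqrt{\cm^1_n})\bigr) = 2I \cdot \trop(\sqrt{\cm^1_n}) = 2\,\trop(\sqrt{\cm^1_n}).
\]
By Theorem~\ref{thm:ardilaKlivansBergmanFan}, $\trop(\sqrt{\cm^1_n}) = U_n$, hence $\trop(\cm^1_n) = 2U_n$.

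The final step is to note that $U_n$ is a cone: if $\delta$ is an ultrametric, then for any $\lambda > 0$ the vector $\lambda\delta$ satisfies $\lambda\delta_{uv} \le \max\{\lambda\delta_{uw},\lambda\delta_{vw}\}$, so $\lambda\delta$ is again an ultrametric. Therefore $2U_n = U_n$, and we conclude $\trop(\cm^1_n) = U_n$.

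I do not expect a real obstacle here, as the whole argument is a short reduction. The only place requiring a little care is the first step, namely matching the definition of $\cm^1_n$ as the Zariski closure of a real image with the complex image of the parameterized linear space so that Theorem~\ref{thm:monomialMapTropicalVariety} applies as stated; but this is the standard fact that a polynomial map has the same Zariski closure over $\rr^n$ and over $\cc^n$.
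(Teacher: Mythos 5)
Your proof is correct and is essentially identical to the paper's: both realize $\cm^1_n$ as the image of $\sqrt{\cm^1_n}$ under the coordinatewise-squaring monomial map with matrix $2I$, apply Theorem~\ref{thm:monomialMapTropicalVariety} and Theorem~\ref{thm:ardilaKlivansBergmanFan}, and conclude by noting $2U_n = U_n$. The extra care you take in matching the real parameterization with its complex Zariski closure is a reasonable clarification but does not change the argument.
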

\begin{proof}
    Let $\alpha: \cc^{\binom{[n]}{2}} \rightarrow \cc^{\binom{[n]}{2}}$ denote the monomial map that squares each coordinate.
    The matrix $A$ representing $\alpha$ is twice the identity matrix.
    Since $\cm^1_n = \alpha(\sqrt{\cm_n^1})$, Theorem \ref{thm:monomialMapTropicalVariety} implies that 
    $\trop(\cm^1_n) = A \trop(\sqrt{\cm_n^1})$.
    Theorem \ref{thm:ardilaKlivansBergmanFan} says that $\trop(\sqrt{\cm_n^1}) = U_n$
    and it is easy to see that $A U_n = U_n$.
\end{proof}

For any ultrametric $\delta \in U_n$, the point $\delta' = \delta + a(1,\ldots,1)^T$ is also an ultrametric for any real number $a$ since $\delta_{uv} + a \leq \max\{\delta_{uw}+a,\delta_{vw}+a\}$ for all triples $u,v,w \in [n]$.  Therefore $\trop(\cm^1_n)$ can be considerd as a subset of tropical projective space $\TP^{\binom{[n]}{2}-1}$ defined as the quotient $\rr^{\binom{[n]}{2}}/\rr(1,\ldots,1)^T$.

Ultrametrics on $[n]$ can be classified by their topology $T$.
Let $T$ be a rooted tree with leaves labeled by $[n]$.
A \emph{clade} of $T$ is the set of leaves below a given internal vertex.
A \emph{descendant} of an internal vertex $v$ of $T$ is a vertex $u$ in $T$
such that the unique path from $u$ to the root of $T$ contains $v$.
The trivial clade is $[n]$, the set of all leaves.
Let $\clade(T)$ denote the set of clades of $T$ and $\clade^\circ(T)$ the set of clades excluding the trivial clade.
Each rooted tree $T$ is completely determined by $\clade^\circ(T)$
(one can build a tree given its clades by first adding an internal node above
all the leaves in each minimal clade, then treating each minimal clade as a single leaf and proceeding inductively).
As a shorthand for a nonempty subset $\{i_1,\ldots,i_r\} \subseteq [n]$, we will also write $i_1\cdots i_r$.

\begin{ex}
    Let $T_1$ and $T_2$ be the trees in Figure \ref{fig:treeGraph}.
    Then $\clade^\circ(T_1) = \{12,123\}$ and $\clade^\circ(T_2) = \{13,24\}$.
\end{ex}

Let $K_T$ denote the closed cone consisting of all ultrametrics with topology $T$.
Like $U_n$, it has lineality space spanned by $(1,\ldots,1)^T$, so it can be considered as a subset of tropical projective space.

\begin{thm}[\cite{ardila2006bergman}, Proposition 3]\label{thm:simplicialComplex1}
The tropical Cayley-Menger variety of points in $\rr^1$, $\trop(\cm_1^n) = U_n \subseteq \TP^{\binom{[n]}{2}-1}$, admits a simplicial fan structure with cones $K_T$ for each rooted tree $T$ on leaves $[n]$, where $K_{T_1}$ is a face of $K_{T_2}$ if and only if $\clade(T_1) \subseteq \clade(T_2)$.
\end{thm}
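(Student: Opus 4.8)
The plan is to verify directly that the collection $\{K_T\}$, as $T$ ranges over rooted trees on leaf set $[n]$, forms a simplicial fan whose support is $U_n$ and whose face relation is given by inclusion of clade sets. First I would establish the set-theoretic decomposition $U_n = \bigcup_T K_T$: every ultrametric $\delta$ has a well-defined topology $T$ (as recalled after Theorem~\ref{thm:ardilaKlivansBergmanFan}, via the most-recent-common-ancestor construction), hence lies in $K_T$; conversely each $K_T \subseteq U_n$ by definition. Next I would pin down the linear-algebraic description of each $K_T$. Let $M_T$ be the $\binom{[n]}{2} \times |\clade^\circ(T)|$ matrix whose column indexed by a clade $C$ has a $1$ in row $uv$ exactly when $C$ is the smallest clade containing both $u$ and $v$ (equivalently, the most recent common ancestor of $u,v$ is the internal vertex defining $C$); this is exactly the matrix $M_T$ appearing in Figure~\ref{fig:ultrametricExample}. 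Then $\delta \in K_T$ iff $\delta = M_T w$ for a weight vector $w = (w_C)$ that is weakly decreasing along each root-to-leaf chain of clades, i.e.\ $w_C \le w_{C'}$ whenever $C \subsetneq C'$. I would check that the defining inequalities $\delta_{uv} \le \max\{\delta_{uw}, \delta_{vw}\}$ translate precisely into these monotonicity constraints on $w$, so that $K_T$ is the image under the injective linear map $M_T$ of the polyhedral cone $\{w : w_C \le w_{C'} \text{ for } C \subsetneq C'\}$.

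With this in hand, the remaining structural claims follow. The cone $\{w : w_C \le w_{C'} \text{ for } C \subsetneq C'\}$ is a simplicial cone: its lineality is $\rr \cdot (1,\dots,1)^T$ and, modulo lineality, it is the cone spanned by the indicator vectors $\mathbf{1}_{D}$ where $D$ runs over the principal down-sets of the clade poset (clades contained in a fixed clade), and these rays are linearly independent because the clade poset of a tree is a forest under the covering relation — this is the standard fact that order cones of forests are simplicial. Since $M_T$ is injective on $\rr^{\clade^\circ(T)}$ (distinct clades produce distinct, in fact disjointly supported, columns in the appropriate sense), $K_T$ is itself a simplicial cone of dimension $|\clade^\circ(T)| + 1$ (or $|\clade^\circ(T)|$ in $\TP^{\binom{[n]}{2}-1}$). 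For the face relation: I would show $K_{T_1}$ is a face of $K_{T_2}$ iff $\clade(T_1) \subseteq \clade(T_2)$. In one direction, if $\clade(T_1) \subseteq \clade(T_2)$ then $T_1$ is obtained from $T_2$ by contracting internal edges, which corresponds to setting some of the monotonicity inequalities in the $T_2$-cone to equalities — the resulting subcone is a face and it is exactly $K_{T_1}$ under the identification of matrices. Conversely, any face of the order cone for $T_2$ is obtained by imposing equalities $w_C = w_{C'}$ along cover relations, and such a face is an order cone of a coarser tree whose clade set is a subset of $\clade(T_2)$; matching this coarser tree to its topology shows the face is some $K_{T_1}$ with $\clade(T_1) \subseteq \clade(T_2)$.

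Finally I would check the fan axioms: that $U_n$ is closed under the relevant operations and that the intersection of any two cones $K_{T_1} \cap K_{T_2}$ is a common face. The key point is that $K_{T_1} \cap K_{T_2} = K_{T_0}$ where $T_0$ is the tree with $\clade(T_0) = \clade(T_1) \cap \clade(T_2)$ — one must verify that the intersection of two clade sets (each closed under the tree axioms, meaning laminar and containing $[n]$ and the singletons) is again the clade set of a tree, which holds because an intersection of laminar families is laminar. An ultrametric lying in both $K_{T_1}$ and $K_{T_2}$ must have all of its "splits" present in both trees, hence its topology refines to $T_0$, giving the reverse inclusion. Combined with the face characterization, this shows $K_{T_1} \cap K_{T_2}$ is a face of each, completing the verification that $\{K_T\}$ is a simplicial fan with the stated combinatorics, and its support is $U_n = \trop(\cm_n^1)$ by Proposition~\ref{prop:CM1}.

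I expect the main obstacle to be the bookkeeping in the face-relation step — specifically, verifying cleanly that contracting an internal edge of $T_2$ (equivalently, imposing one monotonicity equality on the weights) transforms $M_{T_2}$ and its order cone into $M_{T_1}$ and its order cone in a way that is manifestly a face, and doing this coherently so that all faces of $K_{T_2}$ are accounted for by subtrees. Everything else — the ultrametric-to-monotone-weight dictionary, simpliciality of order cones of forests, laminarity of intersections — is routine once set up carefully.
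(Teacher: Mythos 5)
The paper offers no proof of this statement---it is quoted as a citation of Ardila--Klivans (their Proposition~3), whose approach proceeds through the theory of Bergman fans of matroids applied to the graphic matroid of $K_n$, not by direct verification of the fan axioms. Your proposal is therefore a genuinely different, more elementary route: you verify the fan structure directly from the ultrametric-to-monotone-weight dictionary, simpliciality of order cones of forest posets, and laminarity of intersections. The overall architecture is sound, and it anticipates pieces the paper develops independently later (Lemma~\ref{lemma:ultrametricConeGens} for the ray description, Proposition~\ref{prop:intersectionsAsItShould} for the intersection-of-cones step in the harder two-tree setting). A few bookkeeping slips worth fixing: (i) the generators of the order cone $\{w : w_C \leq w_{C'} \text{ for } C \subsetneq C'\}$ modulo lineality are the \emph{negatives} $-\mathbf{1}_D$ of principal down-set indicators, not $\mathbf{1}_D$---indeed under $M_T$ these map to the $-v_C$ of Lemma~\ref{lemma:ultrametricConeGens}, so your sign is reversed; (ii) by the paper's definition a clade is the leaf set below an \emph{internal} vertex, so singletons are not clades and a clade set is a laminar family containing $[n]$ but not necessarily the singletons; (iii) $M_T$ is indexed by $\clade(T)$, not $\clade^\circ(T)$, since pairs $uv$ whose most recent common ancestor is the root have $[n]$ as their minimal enclosing clade, and injectivity comes from the columns of $M_T$ having pairwise disjoint supports. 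None of these change the structure of the argument, which is correct.
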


We now introduce some notation for giving two different bases of the linear hull of $K_T$.

\begin{defn}\label{defn:basesOfLinearHull}
    For each $C \in \clade(T)$,
    we define two vectors, $v_C$ and $m_T^c$, in $\rr^{\binom{[n]}{2}}$ as follows.
    Let $v_C \in \rr^{\binom{[n]}{2}}$ be the characteristic vector of $\binom{C}{2}$.
    Let $m_T^C \in \rr^{\binom{[n]}{2}}$ be the characteristic vector of the set of pairs $ij$ in $[n]$ such that $C$ is the smallest clade containing $ij$.
    Let $M_T$ be the matrix with columns $m_T^C$.
    See Figure \ref{fig:ultrametricExample} for an example.
\end{defn}

A given ultrametric $\delta$ with topology $T$ can be expressed as
    \[ \delta = \sum_{C \in clade(T)} \delta_C m_T^C \]
where $\delta_C$ is the label assigned to clade $C$.
Thus, the columns of $M_T$ are a basis of the linear hull of $K_T$.
Within its linear span,
the cone $K_T$ is cut out by the set of inequalities
$\{\delta_{C} \leq \delta_{C'} | C' \in \clade(T), C \subseteq C'\}$.
The following lemma implies that another basis for the linear hull of $K_T$ is the set $\{v_C\}_{C \in \clade(T)}$.

\begin{lemma}\label{lemma:ultrametricConeGens}
	The cone $K_T$ of $U_n \subseteq \TP^{\binom{[n]}{2}-1}$ containing all ultrametrics with a given topology $T$ is generated by $\{-v_C\}_{C \in \clade^\circ(T)}$
    (modulo lineality space).
\end{lemma}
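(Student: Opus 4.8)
The plan is to pass to the coordinates given by the basis $\{m_T^C\}_{C\in\clade(T)}$ of the linear hull of $K_T$. Under the resulting identification of that linear hull with $\rr^{\clade(T)}$, an ultrametric $\delta=\sum_C\delta_C\,m_T^C$ becomes the function $C\mapsto\delta_C$, and, as recorded just before the lemma, $K_T$ is precisely the cone of functions that are \emph{order-preserving} for inclusion of clades, i.e.\ $\delta_C\le\delta_{C'}$ whenever $C\subseteq C'$. The lineality direction $(1,\dots,1)^T$ equals $\sum_{C\in\clade(T)}m_T^C$, since every pair lies in a unique smallest clade; so ``modulo lineality'' means ``modulo constant functions.'' The first step is the identity
\[ v_C=\sum_{C'\in\clade(T),\ C'\subseteq C}m_T^{C'}, \]
which holds because sending a pair $ij\in\binom{C}{2}$ to its smallest clade partitions $\binom{C}{2}$ into the supports of the $m_T^{C'}$ with $C'\subseteq C$. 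Thus in $\clade(T)$-coordinates $v_C$ is the indicator of the principal order ideal $\{C'\in\clade(T):C'\subseteq C\}$. (In passing, this shows the change of basis from $\{m_T^C\}$ to $\{v_C\}$ is the zeta matrix of the poset $\clade(T)$, hence $\{v_C\}_{C\in\clade(T)}$ is also a basis of the linear hull, as claimed in the surrounding text.)

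Next I verify the easy inclusion. For $C\in\clade^\circ(T)$ the vector $-v_C$ has $\clade(T)$-coordinates equal to $-1$ on the ideal below $C$ and $0$ elsewhere; such a $\{0,-1\}$-valued function is order-preserving (if $C_1\subseteq C_2$ with $C_2\subseteq C$ then $C_1\subseteq C$, so both coordinates are $-1$; otherwise the $C_2$-coordinate is $0\ge$ the $C_1$-coordinate). Hence $-v_C\in K_T$, and the conical hull of $\{-v_C:C\in\clade^\circ(T)\}$ together with the lineality space is contained in $K_T$.

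For the reverse inclusion, fix $\delta\in K_T$. Since $\clade(T)$ is the clade poset of a rooted tree, every nontrivial clade $C$ has a unique parent clade $\widehat{C}$ (the smallest clade properly containing $C$), and the clades containing a fixed $C'$ form a chain $C'=C'_0\subsetneq C'_1\subsetneq\cdots\subsetneq C'_k=[n]$ with $C'_{i+1}$ the parent of $C'_i$. Set $\lambda_C:=\delta_{\widehat{C}}-\delta_C$ for $C\in\clade^\circ(T)$; order-preservation gives $\lambda_C\ge 0$. Using the identity above, the $C'$-coordinate of $\sum_{C\in\clade^\circ(T)}\lambda_C(-v_C)$ is $-\sum_{C'\subseteq C,\ C\in\clade^\circ(T)}\lambda_C=-\sum_{i=0}^{k-1}(\delta_{C'_{i+1}}-\delta_{C'_i})=\delta_{C'}-\delta_{[n]}$, the sum telescoping up the chain over $C'$. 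Therefore $\sum_{C\in\clade^\circ(T)}\lambda_C(-v_C)=\delta-\delta_{[n]}(1,\dots,1)^T$, so $\delta$ lies in the conical hull of $\{-v_C\}$ plus the lineality space. This gives equality; and since $\{v_C\}_{C\in\clade(T)}$ is a basis, the coefficients $\lambda_C$ are uniquely determined by $\delta$, so the $-v_C$ are linearly independent modulo lineality and $K_T$ is in fact simplicial.

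I do not anticipate a genuine obstacle: the whole argument rests on the fact that $\clade(T)$ is a forest poset with a top element, so each element has a unique parent and the telescoping identity goes through. The only points needing care are the bookkeeping ones—fixing the orientation of the defining inequalities of $K_T$ (smaller clades receive smaller values, which is exactly why the generators are $-v_C$ and not $v_C$) and correctly matching $v_C$ to a principal order ideal via the smallest-clade partition of $\binom{C}{2}$.
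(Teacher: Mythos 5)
Your proposal is correct and follows essentially the same approach as the paper: both decompose $\delta$ as $\delta_{[n]}\,v_{[n]} + \sum_{C\in\clade^\circ(T)} -t_C\, v_C$ with $t_C = \delta_{\widehat C}-\delta_C \ge 0$, using the telescoping along the chain of clades containing each $C'$. The only difference is that you make the zeta-matrix change of basis and the easy inclusion $-v_C\in K_T$ explicit, whereas the paper treats both as immediate.
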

\begin{proof}
	Let $\delta$ be an ultrametric with topology $T$ and let $m = \max(\delta)$.
    Let $\delta'$ be the ultrametric obtained by labeling all internal vertices of $T$ by $m$.  So $\delta' = m(1,\ldots,1)^T = mv_{[n]}$.
    The ultrametric $\delta'$ can be turned into $\delta$ by iteratively
    decreasing the labels on each internal vertex and all its descendants.
    This corresponds to subtracting vectors of the form $v_C$.
    Concretely,
    \[ \delta = mv_{[n]} + \sum_{C \in clade^\circ(T)} -t_C v_C \]
    where $t_C = \delta_{C'} - \delta_{C}$ for all $C \in \clade^\circ(T)$ where $C'$ is the parent of $C$. 
    The condition $\delta_{C} \le \delta_{C'}$ gives $t_C \geq 0$, so $K_T$ consists of all nonnegative combinations of $\{-v_C\}_{C \in \clade^\circ(T)}$.
\end{proof}

\section{The tropical Pollaczek-Geiringer variety}\label{sec:tropicalLaman}

\begin{thm}\label{thm:tropicalLamanIsTrees}
    The tropicalization of the Pollaczek-Geiringer variety is the Minkowski sum of
    two copies of the set of ultrametrics on $[n]$.
    That is, $\trop(\cm_n^2) = U_n + U_n$.
\end{thm}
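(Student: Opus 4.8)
The plan is to express $\cm_n^2$ multiplicatively, as the Zariski closure of a coordinatewise (Hadamard) product of two copies of the complexification of $\sqrt{\cm_n^1}$, and then apply Theorem~\ref{thm:monomialMapTropicalVariety} together with Theorem~\ref{thm:ardilaKlivansBergmanFan}. The reason for doing this, rather than attacking the squared-distance parametrization $(x_i-x_j)^2+(y_i-y_j)^2$ head on, is that tropicalization is well behaved under coordinatewise \emph{products} of varieties (via monomial maps) but \emph{not} under coordinatewise sums: in general $\trop(X+Y)\neq\trop(X)+\trop(Y)$, so one first has to turn the ``$+$'' in the sum of two squares into a ``$\cdot$''.

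First I would pass from $\rr$ to $\cc$. The map $f\colon\cc^{2n}\to\cc^{\binom{[n]}{2}}$ given by $f(x,y)=\big((x_i-x_j)^2+(y_i-y_j)^2\big)_{i<j}$ is polynomial and $\rr^{2n}$ is Zariski dense in $\cc^{2n}$, so the Zariski closure of $f(\rr^{2n})$ equals that of $f(\cc^{2n})$; hence $\cm_n^2=\overline{f(\cc^{2n})}$. Over $\cc$ the substitution $z_k=x_k+\sqrt{-1}\,y_k$, $w_k=x_k-\sqrt{-1}\,y_k$ is an invertible linear change of coordinates on $\cc^{2n}$ and factors the summand,
\[
(x_i-x_j)^2+(y_i-y_j)^2=(z_i-z_j)(w_i-w_j).
\]
Letting $L\subseteq\cc^{\binom{[n]}{2}}$ be the complexification of $\sqrt{\cm_n^1}$, i.e.\ $L=\{(z_i-z_j)_{i<j}:z\in\cc^n\}$, this shows $f(\cc^{2n})=L\star L:=\{(a_{ij}b_{ij})_{i<j}:a,b\in L\}$, and therefore $\cm_n^2=\overline{L\star L}$.

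Next I would recognize $L\star L$ as a monomial image: it is the image of the linear (in particular irreducible) variety $L\times L\subseteq\cc^{\binom{[n]}{2}}\times\cc^{\binom{[n]}{2}}$ under the monomial map $\mu\colon(a,b)\mapsto(a_{ij}b_{ij})_{i<j}$, whose representing matrix is $0/1$ with exactly two ones in each column; note that $L$, and hence $L\times L$, meets the torus, since a generic point of $L$ has all coordinates nonzero. By Theorem~\ref{thm:monomialMapTropicalVariety}, $\trop(\cm_n^2)=\trop\big(\overline{\mu(L\times L)}\big)=A\,\trop(L\times L)$, where $A$ denotes the associated linear map $\rr^{\binom{[n]}{2}}\times\rr^{\binom{[n]}{2}}\to\rr^{\binom{[n]}{2}}$, $(\alpha,\beta)\mapsto\alpha+\beta$. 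Since tropicalization commutes with products over the algebraically closed valued field we work over \cite{maclagan2015introduction}, $\trop(L\times L)=\trop(L)\times\trop(L)$, and $A$ carries $\trop(L)\times\trop(L)$ precisely onto the Minkowski sum $\trop(L)+\trop(L)$. Finally $\trop(L)=\trop(\sqrt{\cm_n^1})=U_n$ by Theorem~\ref{thm:ardilaKlivansBergmanFan}, so $\trop(\cm_n^2)=U_n+U_n$.

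The only genuinely subtle point is the one flagged at the outset: one must replace the additive parametrization by a multiplicative one before tropicalizing, which is exactly what complexifying and factoring the sum of two squares accomplishes; after that, the monomial-map theorem does the work. The remaining items — the $\rr$-versus-$\cc$ closure comparison, the compatibility of $\trop$ with products and with $\mu$, and the identification of $A\big(\trop(L)\times\trop(L)\big)$ with the Minkowski sum — are routine.
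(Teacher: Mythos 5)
Your proof is correct and follows essentially the same route as the paper's: you perform the change of variables $z_k = x_k + \sqrt{-1}\,y_k$, $w_k = x_k - \sqrt{-1}\,y_k$ to factor the squared-distance parametrization into the Hadamard product of two copies of $\sqrt{\cm_n^1}$, and then apply Theorem~\ref{thm:monomialMapTropicalVariety} together with Theorem~\ref{thm:ardilaKlivansBergmanFan}. The only difference is that you spell out the $\rr$-versus-$\cc$ density step and the compatibility of $\trop$ with products, which the paper leaves implicit.
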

\begin{proof}
    As noted in \cite{capco2018number}, the usual parameterization of $\cm^2_n$
    given by $\delta_{uv} = (x_u-x_v)^2 + (y_u - y_v)^2$
    becomes $\delta_{uv} = (x_u-x_v)(y_u-y_v)$ after applying the following change of variables
    \[
        x_u \mapsto x_u + i y_u \qquad y_u \mapsto x_u - i y_u.
    \]
    Now let $\alpha: \cc^{\binom{[n]}{2}} \times \cc^{\binom{[n]}{2}} \rightarrow \cc^{\binom{[n]}{2}}$
    be the monomial map sending $(\delta^1_{uv},\delta^2_{wx})_{uv,wx}$ to $(\delta^1_{uv}\delta^2_{uv})_{uv}$.
    Under this new parameterization, it is clear that $\cm_n^2 = \alpha(\sqrt{\cm_n^1} \times \sqrt{\cm_n^1})$.
    The rows of the integer matrix $A$ representing $\alpha$ are $\{e_{uv} + f_{uv}\}_{uv}$ 
    where $\{e_{uv}\}_{uv},\{f_{uv}\}_{uv}$ are the canonical bases of each copy of $\cc^{\binom{[n]}{2}}$.
    Theorems \ref{thm:monomialMapTropicalVariety} and \ref{thm:ardilaKlivansBergmanFan} then imply the proposition.
\end{proof}

\begin{rmk}
 Proposition \ref{prop:CM1} and Thoerem \ref{thm:tropicalLamanIsTrees} describe $\trop(\cm_n^d)$ for $d = 1$ and $2$, and suggest a pattern that perhaps $\trop(\cm_n^d)$ might be equal to the sum of $d$ copies of $U_n$ for general $d$.  However we were not able to make such a generalization for $d \geq 3$.  The key observation in the $d = 2$ case is the factorization of the Euclidean distance $\delta_{uv} = (x_u-x_v)^2 + (y_u - y_v)^2$ into a product of a term involving only $x$-distance and a term involving only $y$-distnace.  We could not find an analogous factorization for $\delta_{uv} = (x_u-x_v)^2 + (y_u - y_v)^2 + (z_u - z_v)^2$, the Euclidean distance in $\rr^3$.
\end{rmk}

Our goal for the rest of this section is to prove Theorem \ref{thm:simplicialComplex} which describes a polyhedral fan structure on $\trop(\cm_2^n)$.

\begin{defn}\label{defn:tpn}
    The \emph{tree pair complex on $n$ leaves}, denoted $\tp(n)$, is the abstract simplicial complex on ground set $2^{[n]}$
    whose faces are all subsets of the form $\clade(T_1) \cup \clade(T_2)$ where $T_1$ and $T_2$
    are rooted trees on leaf set $[n]$.
\end{defn}

Note that any subset of $\clade(T)$ can be realized as $\clade(T')$ where
$T'$ is obtained from $T$ by contracting internal edges.
Thus the tree pair complex is indeed an abstract simplicial complex.
Definition \ref{defn:tpn} allows $T_1 = T_2$,
so $\tp(n)$ contains the simplicial complex implicit in Theorem \ref{thm:simplicialComplex1}
as a sub-complex.
We now state, but do not yet prove, our main theorem.

\begin{thm}\label{thm:simplicialComplex}
    The tropical Pollaczek-Geiringer variety $\trop(\cm_2^n)$ admits a simplicial fan structure isomorphic to $\tp(n)$.
\end{thm}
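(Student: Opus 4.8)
The plan is to exhibit an explicit simplicial fan structure on $\trop(\cm_n^2) = U_n + U_n$ whose face poset matches the tree pair complex $\tp(n)$. By Theorem~\ref{thm:simplicialComplex1}, $U_n$ itself has the simplicial fan structure with maximal cones $K_T$, one for each binary rooted tree $T$, and $K_{T_1}$ a face of $K_{T_2}$ exactly when $\clade(T_1)\subseteq\clade(T_2)$. Since $\trop(\cm_n^2)$ is the Minkowski sum, its natural fan candidate is the common refinement: for each ordered pair $(T_1,T_2)$ of rooted trees, consider the cone $K_{T_1} + K_{T_2}$, and show that these cones (intersected down to a fan) give the claimed complex. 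The crucial structural fact to prove is that $K_{T_1}+K_{T_2}$ is a \emph{simplicial} cone whose rays correspond bijectively to $\clade^\circ(T_1)\cup\clade^\circ(T_2)$, so that there is no unexpected collapsing when the two clade sets overlap and no hidden linear relations among the generators.

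First I would use Lemma~\ref{lemma:ultrametricConeGens}: modulo the lineality line $\rr(1,\dots,1)^T$, the cone $K_T$ is generated by the vectors $\{-v_C\}_{C\in\clade^\circ(T)}$, where $v_C$ is the characteristic vector of $\binom{C}{2}$. Hence $K_{T_1}+K_{T_2}$ is generated (modulo lineality) by $\{-v_C : C\in\clade^\circ(T_1)\cup\clade^\circ(T_2)\}$, a set indexed precisely by a face of $\tp(n)$. The key linear-algebra step is to show these generators are linearly independent modulo the lineality space; equivalently, that the collection $\{v_C : C\in\clade^\circ(T_1)\cup\clade^\circ(T_2)\}\cup\{v_{[n]}\}$ is linearly independent in $\rr^{\binom{[n]}{2}}$. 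For a single tree this is classical (the matrix $M_T$ of Definition~\ref{defn:basesOfLinearHull} is a change of basis for the $v_C$'s, cf.\ Figure~\ref{fig:ultrametricExample}); for a union of two nested laminar families I would argue by a careful triangularity/support argument — order the clades by inclusion and size and peel off innermost clades — or, alternatively, deduce it from the fact that $\dim(\cm_n^2)=2n-3$ together with Lemma~\ref{lemma:indSetCM}(2) and the purity of the tropical variety, forcing top cones to have the right dimension. I would also need to check that distinct faces of $\tp(n)$ give distinct cones and that the intersection of two such cones is again one of them (so that we genuinely have a fan), which follows once one knows each cone is simplicial with the stated rays, since $\clade(T)$ is determined by $\clade^\circ(T)$ and intersections of laminar families are laminar (realizable by edge contractions, as noted after Definition~\ref{defn:tpn}).

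The second half is to verify that this fan actually covers all of $U_n+U_n$, i.e.\ that every point of $\trop(\cm_n^2)$ lies in some $K_{T_1}+K_{T_2}$. This is immediate from the Minkowski sum description: any point is $\delta_1+\delta_2$ with $\delta_i\in U_n$, and each $\delta_i$ lies in some $K_{T_i}$ by Theorem~\ref{thm:simplicialComplex1}. Conversely every cone $K_{T_1}+K_{T_2}$ lies in $U_n+U_n$ by construction. Matching the face relation then reduces to: $K_{T_1}+K_{T_2}$ is a face of $K_{T_1'}+K_{T_2'}$ iff $\big(\clade(T_1)\cup\clade(T_2)\big)\subseteq\big(\clade(T_1')\cup\clade(T_2')\big)$ — the ``if'' direction is clear from the ray description, and the ``only if'' direction uses that the rays $-v_C$ are extreme and pairwise non-proportional, so a face is determined by the set of rays it contains.

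The main obstacle I anticipate is the linear independence claim for the union $\{v_C\}_{C\in\clade^\circ(T_1)\cup\clade^\circ(T_2)}$ together with $v_{[n]}$. When $T_1$ and $T_2$ share clades this is automatic by deduplication, but when their clade sets cross (are not laminar as a combined family), one must rule out accidental linear dependence; I expect this to follow from a monovariant argument on the poset of clades ordered by inclusion — find a clade $C$ minimal under inclusion among all those appearing, note $v_C$ has an entry ($=1$ on some pair inside $C$, $=0$ outside) not shared by any $v_{C'}$ with $C'$ not containing $C$, and induct — but making this fully rigorous for arbitrary overlapping pairs of laminar families is the delicate point, and is where I would spend the bulk of the argument.
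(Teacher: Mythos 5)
Your high-level plan matches the paper's: build the fan out of cones $K_{T_1}+K_{T_2}$, show each such cone is simplicial with rays $\{-v_C\}_{C\in\clade^\circ(T_1)\cup\clade^\circ(T_2)}$, and check the face/intersection axioms. You also correctly flag the linear-independence of the $v_C$'s as the key technical step. However, there are two substantive gaps.

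First, the linear-independence argument is left as a sketch. The paper's device (Proposition~\ref{prop:cladeGraphGivesDimension}) is to observe that $\begin{pmatrix}M_{T_1}&M_{T_2}\end{pmatrix}$ is precisely the vertex--edge incidence matrix of the bipartite clade graph $G_{T_1,T_2}$, so the dimension of $K_{T_1}+K_{T_2}$ is the rank of a graphic matroid, i.e.\ (number of vertices) $-$ (number of components). The combinatorial work is then to bound the number of components by $|\clade(T_1)\cap\clade(T_2)|$, which hinges on showing each clade of $T_1$ is adjacent in $G_{T_1,T_2}$ to the smallest clade of $T_2$ containing it. Your ``peel off innermost clades'' sketch doesn't obviously close (higher clades also hit the witness pair), and your alternative via purity of $\trop(\cm_n^2)$ only directly controls maximal cones; for pairs of binary trees sharing a nontrivial clade, the cone is non-maximal and you cannot always extend to a pair of binary trees with disjoint nontrivial clade sets that contains it, so some further argument is needed.

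Second, and more seriously, your claim that the fan property (``intersections are common faces'') ``follows once one knows each cone is simplicial with the stated rays'' is false as a matter of general convex geometry. Two simplicial cones with rays drawn from a common collection need not intersect in a common face: in $\rr^2$ the cones $\langle e_1,e_2\rangle$ and $\langle e_1,\,e_1+2e_2\rangle$ are both simplicial, yet their intersection $\langle e_1,\,e_1+2e_2\rangle$ is not a face of the first. Simpliciality of each $K_\cals$ gives you that $K_{\cals''}$ is a face of $K_\cals$ whenever $\cals''\subseteq\cals$, but it does not by itself give $K_\cals\cap K_{\cals'}=K_{\cals\cap\cals'}$. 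The paper proves this (Proposition~\ref{prop:intersectionsAsItShould}) by deriving an explicit h-description of $K_\cals$ (Proposition~\ref{prop:hDescription}, which in turn needs Lemma~\ref{lem:minSet} and the clade-intersection poset $\cip(\cals)$), and then running an induction on $|\cals\setminus\cals'|$ to show any point of $K_\cals\cap K_{\cals'}$ is forced onto a proper face of $K_\cals$. That whole layer of the argument is missing from your proposal, and it is exactly where the real work lies.
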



\begin{defn}
    Let $T_1,T_2$ be rooted trees on leaf set $[n]$.
    The \emph{clade graph} of $T_1$ and $T_2$ is the bipartite graph $G_{T_1,T_2} = (V_1,V_2,E)$
    whose partite vertex set $V_i$ is the set of clades of $T_i$
    and whose edge set $E = \{e_{ij} : 1 \le i < j \le n\}$
    has $e_{ij}$ connecting the minimal clades of $T_1$ and $T_2$ that contain
    both leaves $i$ and $j$.
\end{defn}

Proposition \ref{prop:cladeGraphGivesDimension} uses clade graphs
to derive the dimension of the cone $K_{T_1} + K_{T_2}$ from the combinatorics of $T_1$ and $T_2$.
Subgraphs of clade graphs will play a crucial role in our tropical proof of Laman's theorem
in the next section.

\begin{ex}\label{ex:treeConstruction}
    Figure \ref{fig:treeGraph} shows two rooted trees on vertex set $\{1,2,3,4\}$ alongside their clade graph.
    In both trees, the trivial clade $1234$ is the minimal clade containing the leaf pairs $14$ and $34$
    and so there is a double edge between both copies of the trivial clade.
    \begin{figure}[h!]
        \begin{tikzpicture}
            \node [below] (a) at (0,0){$1$};
            \node [below] (b) at (1,0){$2$};
            \node [below] (c) at (2,0){$3$};
            \node [below] (d) at (3,0){$4$};
            \draw (0,0) -- (1/2,1/2);
            \draw (1,0) -- (1/2,1/2);
            \draw (1/2,1/2) -- (1,1);
            \draw (2,0) -- (1,1);
            \draw (3,0) -- (3/2,3/2);
            \draw (1/2,1/2) -- (3/2,3/2);
        \end{tikzpicture}
        \quad
        \begin{tikzpicture}
            \node [below] (a) at (0,0){$1$};
            \node [below] (b) at (1,0){$3$};
            \node [below] (c) at (2,0){$2$};
            \node [below] (d) at (3,0){$4$};
            \draw (0,0) -- (1/2,1/2);
            \draw (1,0) -- (1/2,1/2);
            \draw (1/2,1/2) -- (1,1);
            \draw (2,0) -- (5/2,1/2);
            \draw (3,0) -- (3/2,3/2);
            \draw (1/2,1/2) -- (3/2,3/2);
        \end{tikzpicture}
        \quad
        \begin{tikzpicture}
            \vertex (12) at (0,0)[label=left:$12$]{};
            \vertex (123) at (0,1)[label=left:$123$]{};
            \vertex (1234) at (0,2)[label=left:$1234$]{};
            \vertex (14) at (2,0)[label=right:$24$]{};
            \vertex (23) at (2,1)[label=right:$13$]{};
            \vertex (1234') at (2,2)[label=right:$1234$]{};
            \path
                (12) edge (1234')
                (123) edge (1234')
                (123) edge (23)
                (1234) edge[bend right=20] (1234')
                (1234) edge[bend left=20] (1234')
                (1234) edge (14)
            ;
        \end{tikzpicture}
        \caption{Two rooted trees and their clade graph.}\label{fig:treeGraph}
    \end{figure}
\end{ex}

We now note that our tropical proof of Laman's theorem does not require any of the remaining results in this section.
Hence, the reader who is only interested in our tropical proof of Laman's theorem could skip to Section~\ref{sec:tropicalProofOfLaman} now.

\begin{prop}\label{prop:cladeGraphGivesDimension}
    For rooted trees $T_1,T_2$, the following values are equal:
    \begin{enumerate}
     \item\label{item:dimensionOfCone} the dimension of $K_{T_1} + K_{T_2}$,
     \item\label{item:rankOfGraphic} the rank of the graphic matroid of $G_{T_1,T_2}$ (the number of vertices minus the number of connected components),
     \item\label{item:cardinalityOfUnion} the cardinality of $\clade(T_1) \cup \clade(T_2)$.
    \end{enumerate}
\end{prop}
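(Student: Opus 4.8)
I want to show the three quantities coincide by proving the chain of equalities (2)$=$(3) and (1)$=$(3), or equivalently establishing a linear-algebraic correspondence between spanning forests of the clade graph $G_{T_1,T_2}$ and a natural basis for the linear hull of $K_{T_1}+K_{T_2}$. The linear hull of $K_{T_1}+K_{T_2}$ is the sum of the linear hulls of $K_{T_1}$ and $K_{T_2}$, so by Definition~\ref{defn:basesOfLinearHull} it is spanned by the combined columns $\{m_{T_1}^C : C \in \clade(T_1)\} \cup \{m_{T_2}^D : D \in \clade(T_2)\}$. Thus $\dim(K_{T_1}+K_{T_2})$ is the rank of the matrix $[\,M_{T_1} \mid M_{T_2}\,]$, and my job is to compute this rank combinatorially.

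\textbf{Key steps.} First I would set up the incidence dictionary: for each leaf pair $ij$, the row of $[\,M_{T_1}\mid M_{T_2}\,]$ indexed by $ij$ has exactly two nonzero entries, a $1$ in the column for the minimal clade $C$ of $T_1$ containing $ij$ and a $1$ in the column for the minimal clade $D$ of $T_2$ containing $ij$ — precisely the endpoints of the edge $e_{ij}$ in the clade graph $G_{T_1,T_2}$. So $[\,M_{T_1}\mid M_{T_2}\,]^{T}$ is (up to the trivial double-edge subtlety noted in Example~\ref{ex:treeConstruction}) the vertex-edge incidence matrix of $G_{T_1,T_2}$, a bipartite graph. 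Second, I would invoke the standard fact that over a field of characteristic $\neq 2$ the rank of the incidence matrix of a bipartite graph equals $|V| - (\text{number of connected components})$, i.e.\ the rank of its graphic matroid; this gives (1)$=$(2). I would be slightly careful that the columns of $[\,M_{T_1}\mid M_{T_2}\,]$ are indexed by \emph{all} clades of $T_1$ and of $T_2$, including the two trivial clades $[n]$, which may correspond to isolated or parallel-edged vertices; the incidence-matrix rank formula accommodates this automatically. Third, for (2)$=$(3) I need the number of vertices minus the number of connected components of $G_{T_1,T_2}$ to equal $|\clade(T_1)\cup\clade(T_2)|$. The vertex count of $G_{T_1,T_2}$ is $|\clade(T_1)| + |\clade(T_2)|$ (a disjoint union of the two clade sets as vertex sets), whereas $|\clade(T_1)\cup\clade(T_2)| = |\clade(T_1)| + |\clade(T_2)| - |\clade(T_1)\cap\clade(T_2)|$. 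So (2)$=$(3) is equivalent to showing that the number of connected components of $G_{T_1,T_2}$ equals $|\clade(T_1)\cap\clade(T_2)| $. I would prove this by arguing that each connected component of the clade graph, when one reads off its vertex set, partitions into a $T_1$-part and a $T_2$-part that both describe the same subset of $[n]$, and that this common subset is exactly a clade shared by $T_1$ and $T_2$; conversely every shared clade yields a connected component. The cleanest way is probably induction on $n$ or on the number of internal vertices: a shared minimal clade lets one ``collapse'' leaves and recurse.

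\textbf{Main obstacle.} The combinatorial heart — and where I expect the real work — is the identification of connected components of $G_{T_1,T_2}$ with elements of $\clade(T_1)\cap\clade(T_2)$. One direction (a shared clade forces the corresponding vertices to lie in one component, since the leaf pairs inside a minimal shared clade connect things up) is straightforward, but showing that a component cannot ``straddle'' two distinct shared clades requires understanding how minimal-clade assignments interact: I would need the fact that if $C\in\clade(T_1)$ and $D\in\clade(T_2)$ are joined by an edge $e_{ij}$, then following further edges stays within the smallest shared clade containing $C\cup D$. Making this precise — essentially a closure/contraction argument on the pair of trees — is the crux; once it is in place, counting components against shared clades is immediate, and combined with the incidence-matrix rank formula the three-way equality follows. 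I would likely phrase the component-counting lemma separately and prove it by induction, peeling off a minimal shared clade at each step.
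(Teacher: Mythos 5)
Your reduction to the incidence matrix of $G_{T_1,T_2}$ is exactly right: the column $m_{T_i}^C$ is the indicator of those pairs $jk$ whose minimal $T_i$-clade is $C$, so the row of $[\,M_{T_1}\mid M_{T_2}\,]$ indexed by $jk$ is the indicator of the two endpoints of $e_{jk}$ in $G_{T_1,T_2}$, and the equality of (\ref{item:dimensionOfCone}) with (\ref{item:rankOfGraphic}) is the bipartite incidence-matrix rank formula. This is precisely how the paper gets (\ref{item:dimensionOfCone})$=$(\ref{item:rankOfGraphic}).

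Where your plan diverges, and becomes harder than necessary, is in proving (\ref{item:rankOfGraphic})$=$(\ref{item:cardinalityOfUnion}): you propose to show that the number of connected components of $G_{T_1,T_2}$ is \emph{exactly} $|\clade(T_1)\cap\clade(T_2)|$, and you correctly flag the harder direction — that no component can contain two distinct shared clades — as the crux, leaving it as an inductive program you'd still have to carry out. That is a genuine gap, but it is also avoidable. The move you are missing is that the linear hull of $K_{T_1}+K_{T_2}$ is also spanned by the family $\{v_C : C \in \clade(T_1)\cup\clade(T_2)\}$ of Definition~\ref{defn:basesOfLinearHull} (this is what Lemma~\ref{lemma:ultrametricConeGens} buys you), and, crucially, these vectors are indexed by the \emph{set union} of the two clade sets rather than their disjoint union: when $C$ is a shared clade, the spanning vector $v_C$ from $T_1$ and from $T_2$ is the \emph{same} vector. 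This gives $\dim(K_{T_1}+K_{T_2}) \le |\clade(T_1)\cup\clade(T_2)|$ outright. Feeding this into your own incidence-matrix identity
\[
\dim(K_{T_1}+K_{T_2}) \;=\; \rank(G_{T_1,T_2}) \;=\; |\clade(T_1)|+|\clade(T_2)| - (\#\text{components})
\]
and rearranging shows $(\#\text{components}) \ge |\clade(T_1)\cap\clade(T_2)|$ automatically. So the only inequality you actually need to prove is the one you called straightforward — every component contains the pair of vertices of some shared clade — which the paper does by showing each $T_1$-clade $C$ is adjacent to the smallest $T_2$-clade containing $C$, and then following the resulting ascending chain of strict inclusions until it terminates at a shared clade. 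The sandwich then forces equality of all three quantities, and you never have to prove the ``no straddling'' lemma at all.
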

\begin{proof}
    Recall that the rank of the vertex-edge incidence matrix of a bipartite graph is equal to the rank of its graphic matroid.
    Equivalence of \eqref{item:dimensionOfCone} and \eqref{item:rankOfGraphic} then
    follows from the fact that $\begin{pmatrix} M_{T_1} & M_{T_2}\end{pmatrix}$ (see Definition \ref{defn:basesOfLinearHull})
    is the vertex-edge incidence matrix of $G_{T_1,T_2}$ and that its column span is the linear hull of $K_{T_1} + K_{T_2}$.

    Now we show that \eqref{item:dimensionOfCone} and \eqref{item:rankOfGraphic} are equivalent to \eqref{item:cardinalityOfUnion}.
    The linear hull of $K_{T_1} + K_{T_2}$ is also spanned by $\{v_C\}_{C \in \clade(T_1)} \cup \{v_C\}_{C \in \clade(T_2)}$, so
     \[ \dim(K_{T_1} + K_{T_2}) \leq |\clade(T_1) \cup \clade(T_2)|. \]
    We proceed by showing that $|\clade(T_1) \cup \clade(T_2)| \leq \rank(G_{T_1,T_2})$.
    Note that the number of vertices of $G_{T_1,T_2}$ is $|\clade(T_1)| + |\clade(T_2)|$
    so we must prove that the number of connected components of $G_{T_1,T_2}$ is at most $|\clade(T_1) \cap \clade(T_2)|$.
    
    We first show that that each clade $C$ of $T_1$ (without loss of generality) connects
    to the smallest clade of $T_2$ containing $C$.
    Let $\mathcal{D}$ be the set of clades in $T_2$ that are adjacent to $C$ in $G_{T_1,T_2}$.  Suppose $E_1,E_2 \in \mathcal{D}$ are disjoint and let $e_{ij}$ and $e_{kl}$ be the edges connecting them to $C$ respectively.  Among the set $\{i,j,k,l\}$ there are at least two other pairs besides $\{i,j\}$ and $\{k,l\}$ for which $C$ is the smallest clade in $T_1$ containing both.  Without loss of generality suppose $\{i,k\}$ is such a pair.  Then $e_{ik}$ connects $C$ to clade $E_3 \in \mathcal{D}$ that contains both $E_1$ and $E_2$.  It follows that $\mathcal{D}$ has a unique maximal element by inclusion, $D$.  For any $i \in C$, there exists $j \in C$ such that $e_{ij}$ is incident to $C$.  Therefore $i \in D$, so then $C \subseteq D$.  So $D$ must be the smallest clade of $T_2$ containing $C$. 

    If $C \in \clade(T_1) \cap \clade(T_2)$ then the two vertices in $G_{T_1,T_2}$ corresponding to $C$ are adjacent.  If $C \in \clade(T_1) \setminus \clade(T_2)$, then its vertex is adjacent to the vertex of a clade $D$ that strictly contains $C$.  Therefore there is a path from vertex $C$ through an ascending chain of clades that eventually reaches a shared clade.  Every vertex is connected to the vertex pair of a shared clade, so the number of connected components of $G_{T_1,T_2}$ is bounded by $|\clade(T_1) \cap \clade(T_2)|$.
\end{proof}

\begin{cor}\label{cor:maximalConesNoCommonClade}
    The pairs of trees $T_1,T_2$ for which $K_{T_1} + K_{T_2}$ has maximal dimension are those such that $T_1$ and $T_2$ are binary and have no nontrivial common clade.
\end{cor}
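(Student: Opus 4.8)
The plan is to read off $\dim(K_{T_1}+K_{T_2})$ from Proposition~\ref{prop:cladeGraphGivesDimension} and then maximize a clade count. By Theorem~\ref{thm:tropicalLamanIsTrees} together with $U_n = \bigcup_T K_T$ (Theorem~\ref{thm:simplicialComplex1}), we may write $\trop(\cm_n^2) = U_n + U_n = \bigcup_{T_1,T_2}(K_{T_1}+K_{T_2})$, the union running over all pairs of rooted trees on leaf set $[n]$; there are finitely many such pairs, and the dimension of a finite union of polyhedra is the maximum of the dimensions of the pieces. Since $\trop(\cm_n^2)$ has the same dimension as $\cm_n^2$, which is $2n-3$ by Lemma~\ref{lemma:indSetCM}(2), it follows that $\max_{T_1,T_2}\dim(K_{T_1}+K_{T_2}) = 2n-3$. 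Hence it suffices to prove that $\dim(K_{T_1}+K_{T_2}) \le 2n-3$ for every pair, with equality exactly when $T_1$ and $T_2$ are binary with no common nontrivial clade.

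For the bound, Proposition~\ref{prop:cladeGraphGivesDimension} gives $\dim(K_{T_1}+K_{T_2}) = |\clade(T_1)\cup\clade(T_2)|$, and inclusion--exclusion gives $|\clade(T_1)\cup\clade(T_2)| = |\clade(T_1)| + |\clade(T_2)| - |\clade(T_1)\cap\clade(T_2)|$. I would then invoke two elementary facts. First, for any rooted tree $T$ on $[n]$ (with every internal node having at least two children, as is implicit throughout the paper since $T$ is determined by $\clade^\circ(T)$) the clades are in bijection with the internal nodes, and an easy edge count shows a tree on $n$ leaves has at most $n-1$ internal nodes, with equality iff $T$ is binary; thus $|\clade(T)| \le n-1$ with equality iff $T$ is binary. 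Second, the trivial clade $[n]$ lies in $\clade(T_1)\cap\clade(T_2)$, so $|\clade(T_1)\cap\clade(T_2)| \ge 1$, with equality iff $T_1$ and $T_2$ have no common nontrivial clade. Combining, $|\clade(T_1)\cup\clade(T_2)| \le (n-1)+(n-1)-1 = 2n-3$, and equality holds precisely when both $T_1$ and $T_2$ are binary and $\clade(T_1)\cap\clade(T_2) = \{[n]\}$. This is exactly the asserted characterization.

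I do not expect a genuine obstacle; the corollary is a short counting argument layered on top of Proposition~\ref{prop:cladeGraphGivesDimension}. The only mildly delicate point is justifying that ``maximal dimension'' in the statement coincides with the number $2n-3$: this is handled by the uniform upper bound above together with the fact that the cones $K_{T_1}+K_{T_2}$ cover $\trop(\cm_n^2)$, which has dimension $2n-3$. (If one prefers to avoid quoting $\dim\cm_n^2$, the value $2n-3$ is visibly attained by letting $T_1$ and $T_2$ be caterpillar trees on reversed leaf orders, whose nontrivial clades are the sets $\{1,\ldots,k\}$ with $k<n$ and the sets $\{j,\ldots,n\}$ with $j>1$ respectively, hence disjoint.)
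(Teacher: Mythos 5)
Your proof is correct and follows the same counting argument the paper implicitly relies on: Proposition~\ref{prop:cladeGraphGivesDimension} reduces the claim to maximizing $|\clade(T_1)\cup\clade(T_2)|$, and inclusion--exclusion with $|\clade(T_i)|\le n-1$ (equality iff $T_i$ is binary) and $|\clade(T_1)\cap\clade(T_2)|\ge 1$ (equality iff there is no common nontrivial clade) gives the bound $2n-3$ together with the stated equality condition. As your own parenthetical observes, the appeal to $\dim\cm_n^2=2n-3$ and the covering of $\trop(\cm_n^2)$ by the cones is superfluous, since the counting bound already determines the maximum and characterizes exactly when it is attained.
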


\begin{cor}\label{cor:vDescription}
  For any pair of trees $T_1,T_2$, the cone $K_{T_1} + K_{T_2} \subseteq \TP^{\binom{[n]}{2}-1}$ is a simplicial cone generated by $\{-v_C\}_{C \in \clade^\circ(T_1) \cup \clade^\circ(T_2)}$.
\end{cor}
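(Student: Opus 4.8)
The plan is to combine Lemma~\ref{lemma:ultrametricConeGens} with the dimension count of Proposition~\ref{prop:cladeGraphGivesDimension}, and then invoke the elementary fact that a cone is simplicial as soon as the number of its pairwise-distinct generators equals the dimension of its linear span.

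First I would record the generating set. By Lemma~\ref{lemma:ultrametricConeGens}, working modulo the lineality line $\rr(1,\dots,1)^T$, the cone $K_{T_i}$ is generated by $\{-v_C\}_{C \in \clade^\circ(T_i)}$ for $i = 1,2$. Since the linear hull of a Minkowski sum is the sum of the linear hulls, and a nonnegative combination of a sum of cones is a sum of nonnegative combinations, it follows at once that $K_{T_1} + K_{T_2}$ is generated, modulo lineality, by the union $\{-v_C\}_{C \in \clade^\circ(T_1)} \cup \{-v_C\}_{C \in \clade^\circ(T_2)}$. I would then observe that the map $C \mapsto v_C$ is injective on subsets of $[n]$ of size at least two, since $v_C$ is the characteristic vector of $\binom{C}{2}$ and $C \mapsto \binom{C}{2}$ is injective there; as every clade in $\clade^\circ(T_i)$ has at least two leaves, this union as a set of vectors is exactly $\{-v_C\}_{C \in \clade^\circ(T_1) \cup \clade^\circ(T_2)}$, of cardinality $|\clade^\circ(T_1) \cup \clade^\circ(T_2)| = |\clade(T_1) \cup \clade(T_2)| - 1$ (the only clade removed from each $\clade(T_i)$ is the trivial clade $[n]$, which is common to both).

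Next I would pin down the dimension. By Proposition~\ref{prop:cladeGraphGivesDimension}, $\dim(K_{T_1} + K_{T_2}) = |\clade(T_1) \cup \clade(T_2)|$ as a cone in $\rr^{\binom{[n]}{2}}$; since the lineality line lies in its linear span (for instance $v_{[n]} = (1,\dots,1)^T$), the image of $K_{T_1} + K_{T_2}$ in $\TP^{\binom{[n]}{2}-1}$ spans a subspace of dimension exactly $|\clade(T_1) \cup \clade(T_2)| - 1$.

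Finally I would conclude: we have exhibited a generating set of $|\clade(T_1) \cup \clade(T_2)| - 1$ distinct vectors for a cone whose linear span has exactly that dimension, so these generators are linearly independent modulo the lineality space, and hence the cone is simplicial with extreme rays spanned by $\{-v_C\}_{C \in \clade^\circ(T_1) \cup \clade^\circ(T_2)}$. The only points requiring care — and the nearest thing to an obstacle here — are the bookkeeping modulo the lineality space and the verification that the listed $-v_C$ are genuinely distinct and that they span, rather than merely generate as a cone, the linear hull; both are handled by the injectivity remark above together with the spanning statement already used in the proof of Proposition~\ref{prop:cladeGraphGivesDimension}.
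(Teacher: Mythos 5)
Your proof is correct and follows essentially the same route as the paper's: apply Lemma~\ref{lemma:ultrametricConeGens} to obtain the generating set $\{-v_C\}_{C \in \clade^\circ(T_1) \cup \clade^\circ(T_2)}$ for the Minkowski sum, use Proposition~\ref{prop:cladeGraphGivesDimension} to see the dimension (modulo lineality) equals the number of generators, and conclude simpliciality. The only addition over the paper's write-up is your explicit check that $C \mapsto v_C$ is injective on clades of size at least two, which the paper leaves implicit.
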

\begin{proof}
  By Lemma \ref{lemma:ultrametricConeGens}, $K_{T_i}$ has lineality space $(1,\ldots,1)^T$ and is generated by $\{-v_C\}_{C \in \clade^\circ(T_i)}$ in $\TP^{\binom{[n]}{2}-1}$.  Therefore $K_{T_1} + K_{T_2}$ has $(1,\ldots,1)^T$ in its lineality space and is generated by $\{-v_C\}_{C \in \clade^\circ(T_1) \cup \clade^\circ(T_2)}$ in $\TP^{\binom{[n]}{2}-1}$.  By Proposition \ref{prop:cladeGraphGivesDimension},
  \[ \dim(K_{T_1} + K_{T_2}) = |\clade(T_1) \cup \clade(T_2)| = |\clade^\circ(T_1) \cup \clade^\circ(T_2)|+1. \]
  Modulo $(1,\ldots,1)^T$, the dimension of the cone is equal to the number of generators, so it must be simplicial.
\end{proof}

Corollary \ref{cor:vDescription} describes the cone $K_{T_1} + K_{T_2}$ in terms of its rays, i.e. a v-description.  This descirption implies that the cone depends only on $\clade(T_1) \cup \clade(T_2)$, and not any other properties of the trees.  For $\cals \subseteq 2^{[n]}$, let $\cals^\circ$ denote $\cals \setminus\{[n]\}$ and let $K_\cals \subseteq \TP^{\binom{[n]}{2}-1}$ denote the cone generated by $\{-v_C\}_{C\in \cals^\circ}$ (with lineality space $(1,\ldots,1)^T$).  Therefore $K_{T_1} + K_{T_2} = K_\cals$ for $\cals = \clade^\circ(T_1) \cup \clade^\circ(T_2)$.

In addition to a v-description of cone $K_\cals$, we would like an h-description: a system of linear equations and inequalties that cut out the cone.  This result is given in Proposition \ref{prop:hDescription}.  From the h-description we can say how the cones in $\trop(\cm^n_2)$ intersect, which will complete the proof of Theorm \ref{thm:simplicialComplex}.

Suppose $\cals \in \tp(n)$, so that $\cals = \clade(T_1) \cup \clade(T_2)$ for a pair of trees $T_1,T_2$, and $K_\cals = K_{T_1} + K_{T_2}$.
The \emph{clade intersection poset of $\cals$},
denoted $\cip(\cals)$,
will consist of $\cals$ and all intersections of elements of $\cals$
containing two or more elements,
and be partially ordered by inclusion.
In the h-description of $K_\cals$, there will be one equation or inequality for each element of $\cip(\cals)^\circ$ plus some additional equations coming from pairs of leaves within the same elements of $\cip(\cals)$, as we show below.
This construction guarantees that for any pair $ij \subseteq [n]$, there is a unique smallest set $C \in \cip(\cals)$ that contains $ij$.  Denote this set $\overline{ij}$.
Given $C \in \cip(\cals)$, the \emph{parents of $C$} are the elements of $\cip(\cals)$ that cover $C$,
and the \emph{children of $C$} are the elements of $\cip(\cals)$ that $C$ covers
(recall that $a$ is said to cover $b$ in a poset if $a$ is greater than $b$,
and there is no element strictly between $a$ and $b$).
We claim that $\cip(\cals)$ is a join-semilattice.
Otherwise, if the join of $A$ and $B$ does not exist, then there exist mutually incomparable $C_1,C_2 \in \cip(\cals)$
that are both minimal elements of $\cip(\cals)$ containing $A \cup B$.
But this is a contradiction since then $C_1\cap C_2 \in \cip(\cals)$ also contains $A \cup B$.
The join of $A$ and $B$ will be denoted $A \vee B$.

\begin{lemma}\label{lem:minSet}
 For $\cals \in \tp(n)$, and $C \subseteq [n]$ with $|C| \geq 2$, let $D$ be the minimal element of $\cip(\cals)$ that contains $C$.
 Then there exists a pair $i,j \in C$ such that $\overline{ij} = D$.
\end{lemma}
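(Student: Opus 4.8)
The plan is to argue by contradiction: suppose no pair $i,j \in C$ has $\overline{ij} = D$. Since $D$ is the minimal element of $\cip(\cals)$ containing $C$, every pair $i,j \in C$ satisfies $\overline{ij} \subsetneq D$, and moreover $\overline{ij} \in \cip(\cals)$ so $\overline{ij}$ is a proper subset of $D$ that is an intersection of elements of $\cals$ (or an element of $\cals$ itself). I would first recall the structure of $\cip(\cals)$ from its definition: $D$ itself is an intersection of elements of $\cals$, say $D = S_1 \cap \cdots \cap S_r$ with $S_k \in \cals$, and since $\cals = \clade(T_1)\cup\clade(T_2)$, each $S_k$ is a clade of $T_1$ or of $T_2$. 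The key point is that $\cip(\cals)$ is closed under intersections of its elements that contain $\geq 2$ points (this is exactly how it was built), so the sets $\{\overline{ij} : i,j \in C\}$ all lie in $\cip(\cals)$ and are properly contained in $D$.

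Next I would use the join-semilattice structure established just before the lemma. Consider $D' := \bigvee_{i,j \in C} \overline{ij}$, the join in $\cip(\cals)$ of all the sets $\overline{ij}$ for pairs from $C$. Each such join exists because $\cip(\cals)$ is a join-semilattice, and $D' \subseteq D$ since $D$ is an upper bound for all the $\overline{ij}$. On the other hand, $D'$ contains every point of $C$: each $i \in C$ lies in $\overline{ij}$ for any choice of $j \in C$ with $j \neq i$ (and such $j$ exists since $|C| \geq 2$), hence $i \in D'$. So $D'$ is an element of $\cip(\cals)$ containing $C$, and by minimality of $D$ we get $D' = D$. The contradiction I want is that $D$ cannot be the join of proper subsets of itself from $\cip(\cals)$ — equivalently, $D$ is join-irreducible in $\cip(\cals)$ whenever $D$ arises as $\overline{ij}$ for no pair — and this must be extracted from the combinatorics of clades of two trees.

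The main obstacle, and where I'd spend the most care, is exactly this last point: showing $D = \bigvee_{i,j\in C}\overline{ij}$ forces some $\overline{ij}$ to equal $D$. The idea is to trace through how joins are computed in $\cip(\cals)$: $A \vee B$ is the minimal element of $\cip(\cals)$ containing $A \cup B$, and since $D = S_1 \cap \cdots \cap S_r$ is a minimal such intersection, dropping any $S_k$ must enlarge it. I would argue that if every $\overline{ij}$ is properly contained in $D$, then (using that each $\overline{ij}$ is itself a minimal element of $\cip(\cals)$ over the pair $ij$, and that clades of a single tree are laminar — any two are nested or disjoint) one can find a single $S_k \in \cals$, a clade of $T_1$ or $T_2$, with $S_k \cap D \supseteq C$ but $S_k \cap D \subsetneq D$, which already contradicts $D$ being the minimal element of $\cip(\cals)$ containing $C$ (since $S_k \cap D \in \cip(\cals)$ once it has $\geq 2$ elements, which it does as it contains $C$). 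Making the laminar/"one offending clade suffices" step precise — perhaps by choosing, for each pair, the clade in the defining intersection of $\overline{ij}$ that witnesses $\overline{ij} \neq D$, and then using laminarity within each of $T_1, T_2$ to consolidate these witnesses — is the crux. Once that is done, the contradiction closes and the lemma follows.
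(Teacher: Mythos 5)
Your proposal sets up the contradiction in a reasonable-looking way, but the crux step you sketch is not merely hard to make precise — it is based on a false claim. You want to produce $S_k \in \cals$ with $C \subseteq S_k$ and $D \not\subseteq S_k$ (that is, $S_k \cap D \supsetneq C$ but $S_k \cap D \subsetneq D$). No such $S_k$ can exist. Indeed, let $D^\ast = \bigcap\{S \in \cals : C \subseteq S\}$. This intersection lies in $\cip(\cals)$, contains $C$, and one checks $D^\ast \subseteq D$ (if $D \in \cals$ then $D$ is one of the sets being intersected; if $D$ is itself an intersection of elements of $\cals$, each of those contains $C$ and so is again one of the sets being intersected). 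Minimality of $D$ then forces $D^\ast = D$, so \emph{every} $S \in \cals$ containing $C$ automatically contains $D$. Thus the ``one offending clade'' you hope to consolidate the witnesses into cannot exist, and laminarity will not save the argument: the clades $S_{ij}$ witnessing $\overline{ij} \subsetneq D$ need not be nested and may be pairwise disjoint within a single tree, so they cannot in general be merged into a single witness. Relatedly, your observation that $D = \bigvee_{i,j \in C} \overline{ij}$ is essentially a restatement of the hypothesis rather than progress toward a contradiction; $D$ need not be join-irreducible in $\cip(\cals)$, so no contradiction follows from that alone.

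The paper's proof takes a genuinely different route that avoids this trap. Rather than seeking one clade of $\cals$ separating $C$ from $D$, it works with the \emph{children} of $D$ in the poset $\cip(\cals)$: if no pair $ij \subseteq C$ has $\overline{ij} = D$, then every pair in $C$ lies in some child of $D$, and a short extremal argument (take $E_1$ a child with $E_1 \cap C$ maximal, then choose $a,b,c$ appropriately) produces three children $E_1, E_2, E_3$ of $D$ with pairwise nontrivial intersections. Each $E_k$, if not already in $\cals$, is the intersection of exactly two elements of $\cals$ (one of which must be $D$), so lifts to $E_k' \in \cals$; the $E_k'$ then have pairwise nontrivial intersections and one shows they satisfy no containments. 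But of any three elements of $\cals = \clade(T_1) \cup \clade(T_2)$, two lie in the same tree and are therefore nested or disjoint — contradiction. That pigeonhole on ``two of three must come from the same laminar family'' is the structural fact your sketch is missing, and it is what replaces the impossible single-witness consolidation.
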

\begin{proof}
Suppose no such pair $ij$ exists, so every pair in $C$ appears in some child of $D$.
We claim that there exist three children $E_1,E_2,E_3$ of $D$ that have nontrivial pair-wise intersection.  Let $E_1$ be a child of $D$ that has maximal intersection with $C$ among the children of $D$ and let $a \in E_1 \cap C$.  Since $E_1$ does not contain $C$, there is some $b \in C \setminus E_1$.  Let $E_2$ be a child of $D$ that contains the pair $ab$.  By how $E_1$ was chosen, $E_2$ does not contain $E_1 \cap C$, so there is
$c \in (E_1 \cap C) \setminus E_2$ and $E_1$ contains $ac$.
Finally take $E_3$ to be a child of $D$ that contains $bc$.

We note that for any three sets in $\cals$, at least two of the sets
must be clades in the same tree,
implying that either one contains the other, or they are disjoint.
Therefore there cannot be three sets in $\cals$ with nontrivial pair-wise intersection and none containing another.
This implies that any element of $\cip(\cals)\setminus \cals$ is the intersection of exactly two elements of $\cals$.

Now, for each $k=1,2,3$, if $E_k \notin \cals$ then it is the intersection of $D$ and one other set $E'_k \in \cals$.
If $E_k \in \cals$ then let $E_k' = E_k$.
The sets $E_1',E_2',E_3'$ are all in $\cals$ and have nontrivial pair-wise intersection.
If $E_1'$ contains $E'_2$, then $E_2$ is a descendant of both $E_1'$ and $D$.
This implies $E_2 \subseteq E_1' \cap D = E_1$, which is a contradiction since $E_1$ and $E_2$ are both children of $D$.
Therefore the sets $E_1',E_2',E_3'$ do not satisfy any containment relations with each other
and no two are disjoint.
But we have already seen that this cannot happen.
\end{proof}

Modulo lineality space, any point $\delta \in K_\cals$ can be written uniquely as
  \[ \delta = \sum_{C \in \cals^\circ} -t_C v_C \]
with each $t_C \in \rr_{\geq 0}$.  Therefore the $ij$ coordinate has the form
  \[ \delta_{ij} = \sum_{\substack{C \in \cals^\circ \\ C \supseteq \overline{ij}}} -t_{C}. \]
It follows that if $\overline{ij} = \overline{kl}$ then
\begin{equation}\label{eqn:ijkl}
 \delta_{ij} = \delta_{kl}.
\end{equation}
With this in mind, we will write $\delta_C$ to denote some $\delta_{ij}$ with $C = \overline{ij}$.  By Lemma \ref{lem:minSet}, for every $C \in \cip(\cals)$ there is some $ij$ with $\overline{ij} = C$, so $\delta_C$ is well-defined.

We can also express each $t_C$ in terms of $d$.
For $C \in \cip(\cals)^\circ$,
let $D_1,\ldots,D_r$ be the parents of $C$.
For $I \subseteq [r]$,
let $D_I = \bigvee_{i\in I} D_i$ for $I \neq \emptyset$ and $D_{\emptyset} = C$.
Inclusion-exclusion gives
  \begin{equation}\label{eqn:tind}
   -\sum_{I \subseteq [r]} (-1)^{|I|} \delta_{D_I} = \sum_{I \subseteq [r]} (-1)^{|I|}\sum_{\substack{E \in \cals^\circ \\ E \supseteq D_I}} t_E =
   \begin{cases} t_C & \text{ if } C \in \cals^\circ \\ 0 & \text{ otherwise}\end{cases}.
  \end{equation}
For $C \in \cals^\circ$,
rewriting the known inequality $t_C \geq 0$ in terms of $\delta$ gives
 \begin{equation}\label{eqn:facets}
  \sum_{I \subseteq [r]} (-1)^{|I|} \delta_{D_I} \leq 0.
 \end{equation}
Statement \eqref{eqn:tind} also gives an equation on $\delta$ for each $C \in \cip(\cals) \setminus\cals$ with parents $D_1,\ldots,D_r$,
 \begin{equation}\label{eqn:cycle}
  \sum_{I \subseteq [r]} (-1)^{|I|} \delta_{D_I} = 0.
 \end{equation}
Let $F_\cals$ denote the system of inequalties and equations on $\delta$ from lines \eqref{eqn:ijkl},\eqref{eqn:facets},\eqref{eqn:cycle}.  We will prove in Proposition \ref{prop:hDescription} that $F_\cals$ is sufficient to cut out $K_\cals$, but first an example.

\begin{ex}
    We will construct the system $F_\cals$ when $\cals = \clade(T_1) \cup \clade(T_2)$ where $T_1$ and $T_2$ are the rooted trees shown below.
    (i.e. $\cals = \{12,123,56,456,14,134,26,256,123456\}$)
    \[
        \begin{tikzpicture}[scale=0.8]
            \draw(0,0)--(3,3);
            \draw(3,3)--(6,0);
            \draw(2,2)--(2.5,0);
            \draw(4,2)--(3.5,0);
            \draw(1,1)--(1.5,0);
            \draw(5,1)--(4.5,0);
            \node at (0,-0.3){$1$};
            \node at (1.5,-0.3){$2$};
            \node at (2.5,-0.3){$3$};
            \node at (3.5,-0.3){$4$};
            \node at (4.5,-0.3){$5$};
            \node at (6,-0.3){$6$};
        \end{tikzpicture}
        \qquad\qquad
        \begin{tikzpicture}[scale=0.8]
            \draw(0,0)--(3,3);
            \draw(3,3)--(6,0);
            \draw(2,2)--(2.5,0);
            \draw(4,2)--(3.5,0);
            \draw(1,1)--(1.5,0);
            \draw(5,1)--(4.5,0);
            \node at (0,-0.3){$1$};
            \node at (1.5,-0.3){$4$};
            \node at (2.5,-0.3){$3$};
            \node at (3.5,-0.3){$5$};
            \node at (4.5,-0.3){$2$};
            \node at (6,-0.3){$6$};
        \end{tikzpicture}.
    \]
    Then $\cip(\cals) = \cals \cup \{13\}$ since $13 = 123 \cap 134$
    and no other non-singleton non-empty sets arise as intersections of elements in $\cals$.
    The Hasse diagram of $\cip(\cals)$ is as follows
    \[
        \begin{tikzpicture}[scale=1]
            \node (12) at (0,0){$12$};
            \node (13) at (2,0){$13$};
            \node (14) at (4,0){$14$};
            \node (56) at (6,0){$56$};
            \node (26) at (8,0){$26$};
            \node (123) at (1,1){$123$};
            \node (134) at (3,1){$134$};
            \node (456) at (5,1){$456$};
            \node (256) at (7,1){$256$};
            \node (123456) at (4,2){$123456$};
            \draw (12)--(123);
            \draw (13)--(123);
            \draw (14)--(134);
            \draw (13)--(134);
            \draw (56)--(456);
            \draw (56)--(256);
            \draw (26)--(256);
            \draw (123)--(123456);
            \draw (134)--(123456);
            \draw (456)--(123456);
            \draw (256)--(123456);
        \end{tikzpicture}.
    \]
    Considering $C \in \cals$ with $|C| > 2$, we have
    \begin{align*}
        \delta_{123456} :=& \ \delta_{15} = \delta_{16} = \delta_{24} = \delta_{35} = \delta_{36} \qquad \delta_{456} := \delta_{45} = \delta_{46}
        \\ &\delta_{123} := \delta_{23} \qquad \delta_{134} := \delta_{34} \qquad \delta_{256} := \delta_{25}.
    \end{align*}
    One more equality comes from $13 \in \cip(\cals) \setminus \cals$, namely
    \[
        \delta_{13} - \delta_{123} - \delta_{134} + \delta_{123456} = 0.
    \]
    Finally, we have the inequalities
    \begin{align*}
        &\delta_{12} \le \delta_{123} \qquad \delta_{14} \le \delta_{134} \qquad \delta_{56} -\delta_{456} - \delta_{256} + \delta_{123456} \le 0 \qquad \delta_{26} \le \delta_{256} \\
        \delta_{123} &\le \delta_{123456} \qquad \delta_{134} \le \delta_{123456} \qquad \delta_{456} \le \delta_{123456} \qquad \delta_{256} \le \delta_{123456}.
    \end{align*}
\end{ex}

\begin{prop}\label{prop:hDescription}
    For $\cals \in \tp(n)$, the polyhedral cone defined by the system $F_\cals$ is $K_\cals$.
\end{prop}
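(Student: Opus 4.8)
The plan is to prove the two inclusions $K_\cals\subseteq P$ and $P\subseteq K_\cals$, where $P$ is the cone cut out by $F_\cals$. One inclusion is already essentially done: the derivation of \eqref{eqn:ijkl}, \eqref{eqn:facets}, and \eqref{eqn:cycle} shows exactly that every $\delta\in K_\cals$ --- which modulo lineality has the form $\sum_{C\in\cals^\circ}-t_Cv_C$ with all $t_C\ge 0$ --- satisfies $F_\cals$, so $K_\cals\subseteq P$. The content of the proposition is the reverse inclusion, so I would assume $\delta$ satisfies $F_\cals$ and manufacture nonnegative coefficients realizing $\delta$ as an element of $K_\cals$.

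First I would note that by \eqref{eqn:ijkl} together with Lemma \ref{lem:minSet}, the value $\delta_C$ is well defined for every $C\in\cip(\cals)$, and then \emph{define} the candidate coefficients using the right-hand side of \eqref{eqn:tind}: for $C\in\cip(\cals)^\circ$ with parents $D_1,\ldots,D_r$, put $t_C:=-\sum_{I\subseteq[r]}(-1)^{|I|}\delta_{D_I}$ (with $D_I=\bigvee_{i\in I}D_i$ and $D_\emptyset=C$); here $r\ge 1$ because $C\subsetneq[n]$, and I adopt the convention $t_{[n]}=0$. Equations \eqref{eqn:cycle} give $t_C=0$ for $C\in\cip(\cals)\setminus\cals$ and inequalities \eqref{eqn:facets} give $t_C\ge 0$ for $C\in\cals^\circ$. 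It then suffices to prove the identity
\[ \delta_C \;=\; \delta_{[n]} \;+\; \sum_{\substack{E\in\cals^\circ\\ E\supseteq C}}(-t_E)\qquad\text{for every }C\in\cip(\cals), \]
for then evaluating at $C=\overline{ij}$ for each pair $ij$ and assembling characteristic vectors yields $\delta=\delta_{[n]}(1,\ldots,1)^T+\sum_{C\in\cals^\circ}(-t_C)v_C\in K_\cals$, since each $t_C\ge 0$.

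I would prove the displayed identity by induction on $n-|C|$, processing $\cip(\cals)$ from the top downward. The base case $C=[n]$ is immediate since the sum is empty. For the inductive step, observe that $\{E\in\cip(\cals):E\supsetneq C\}$ coincides with $\{E\in\cip(\cals):E\supseteq D_i\text{ for some }i\}$ --- one containment is clear, the other follows by choosing a saturated chain in $\cip(\cals)$ from $C$ up to $E$, whose first step is a parent of $C$ lying below $E$. Combining this with the inclusion-exclusion identity $\mathbf{1}[E\supseteq D_i\text{ for some }i]=-\sum_{\emptyset\neq I\subseteq[r]}(-1)^{|I|}\mathbf{1}[E\supseteq D_I]$, which holds because $\cip(\cals)$ is a join-semilattice (so $E\supseteq D_I$ iff $E\supseteq D_i$ for all $i\in I$), rewrites $\sum_{E\in\cip(\cals),\,E\supsetneq C}(-t_E)$ as $-\sum_{\emptyset\neq I}(-1)^{|I|}\sum_{E\in\cip(\cals),\,E\supseteq D_I}(-t_E)$. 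Since each $D_I$ with $I\neq\emptyset$ strictly contains $C$, the inductive hypothesis turns the inner sums into $\delta_{D_I}-\delta_{[n]}$; substituting the defining relation $-t_C=\sum_{I\subseteq[r]}(-1)^{|I|}\delta_{D_I}$ (whose $I=\emptyset$ term is $\delta_C$) and using $\sum_{\emptyset\neq I\subseteq[r]}(-1)^{|I|}=-1$ collapses the whole expression to $\delta_C-\delta_{[n]}$.

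I expect the main obstacle to be the bookkeeping in that inductive step: one must track that $D_I$ is itself in $\cip(\cals)$ (so $\delta_{D_I}$ is defined and the hypothesis applies), that the indicator identity handles the top element $[n]$ correctly (absorbed by the convention $t_{[n]}=0$), and that the $-\delta_{[n]}$ terms cancel precisely via $\sum_{\emptyset\neq I}(-1)^{|I|}=-1$. The remaining ingredients --- well-definedness of $\delta_C$, the sign and vanishing of the $t_C$, the existence of parents, and the join-semilattice structure --- are already available from Lemma \ref{lem:minSet}, the system $F_\cals$, and the discussion preceding the proposition.
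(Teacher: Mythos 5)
Your proof is correct, and it takes a genuinely different route from the paper's. The paper argues indirectly: it first notes the inequalities in $F_\cals$ are exactly the facet-defining inequalities of $K_\cals$, then shows by a downward induction through $\cip(\cals)$ that the equations of $F_\cals$ cut out a linear space of dimension at most $|\cals|$; combined with $\dim K_\cals = |\cals|$ from Proposition~\ref{prop:cladeGraphGivesDimension}, this forces the equations to cut out precisely the linear hull of $K_\cals$, and the claim follows. Your argument is direct and constructive: given $\delta$ satisfying $F_\cals$, you manufacture the candidate coefficients $t_C$ from~\eqref{eqn:tind}, read off their sign and vanishing from~\eqref{eqn:facets} and~\eqref{eqn:cycle}, and prove by downward induction the identity $\delta_C = \delta_{[n]} + \sum_{E\in\cals^\circ,\,E\supseteq C}(-t_E)$ which exhibits $\delta$ in the cone generated by $\{-v_C\}_{C\in\cals^\circ}$. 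The inductive step you carry out --- expressing the indicator $\mathbf{1}[E\supsetneq C]$ via inclusion-exclusion over joins $D_I$ of parents, applying the hypothesis to each $D_I$, and collapsing with $\sum_{\emptyset\neq I\subseteq[r]}(-1)^{|I|}=-1$ --- is sound, and the needed facts (existence of parents, well-definedness of $\delta_C$ via Lemma~\ref{lem:minSet}, the join-semilattice property of $\cip(\cals)$) are all in place. What your approach buys is independence from Proposition~\ref{prop:cladeGraphGivesDimension} and an explicit inverse to the parameterization $\delta = \sum(-t_C)v_C$; what it costs is a heavier inclusion-exclusion calculation in place of the paper's cleaner dimension count. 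Both are valid proofs, and both hinge on the same downward induction through $\cip(\cals)$, just put to different uses.
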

\begin{proof}
It has already been observed that $\delta \in K_\cals$ satisfies the system $F_\cals$.
The inequalities in $F_\cals$ are facet-defining, and define all facets of $K_\cals$,
because for each given $C \in \cals^\circ$,
\eqref{eqn:facets} achieves equality at all extreme rays of $K_\cals$ aside from $v_C$.

It remains to show that the linear space defined by the equations of $F_\cals$ is the linear hull of $K_\cals$.  For each pair $ij$, $\delta_{ij} = \delta_C$ for some $C \in \cip(\cals)$.  If $C \notin \cals$ then $\delta_C$ can be rewritten as a sum and difference of $\{\delta_D\}_{D \supsetneq C}$ using the equality in $F_\cals$ associated to $C$.  Since the maximal element of $\cip(\cals)$ is $[n] \in \cals$, by induction $\delta_C$ can be written in terms of $\{\delta_D\}_{D \in \cals,\; D \supseteq C}$.  Therefore the linear space defined by $F_\cals$ is parameterized by $\{\delta_C\}_{C \in \cals}$ so it has dimension at most $|\cals|$ including the lineality space.  We know this linear space contains $K_\cals$, which also has dimension $|\cals|$ by Proposition \ref{prop:cladeGraphGivesDimension}, so it must be equal to the linear hull of $K_\cals$.
\end{proof}

\begin{prop}\label{prop:intersectionsAsItShould}
 For $\cals,\cals' \in \tp(n)$,
  \[ K_\cals \cap K_{\cals'} = K_{\cals \cap \cals'}. \]
\end{prop}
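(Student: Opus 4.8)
The plan is to prove the two inclusions separately. The inclusion $K_{\cals\cap\cals'} \subseteq K_\cals\cap K_{\cals'}$ is immediate from the v-description: by Corollary~\ref{cor:vDescription} (applied via the identification $K_\cals = K_{T_1}+K_{T_2}$ for any trees realizing $\cals$), the cone $K_\cals$ is generated by $\{-v_C\}_{C\in\cals^\circ}$ modulo lineality, and if $\cals\cap\cals' \in \tp(n)$ then $(\cals\cap\cals')^\circ \subseteq \cals^\circ$ and $(\cals\cap\cals')^\circ\subseteq (\cals')^\circ$, so every generator of $K_{\cals\cap\cals'}$ is a generator of both $K_\cals$ and $K_{\cals'}$. (One should first note that $\cals\cap\cals'$ is itself a face of $\tp(n)$: writing $\cals=\clade(T_1)\cup\clade(T_2)$ and $\cals'=\clade(S_1)\cup\clade(S_2)$, any subset of $\clade(T_1)$ is $\clade(T_1')$ for a contraction $T_1'$ of $T_1$, and more directly, $\cals\cap\cals'$ is a subset of $\cals$, hence a face of $\tp(n)$ since $\tp(n)$ is a simplicial complex containing $\cals$.)

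For the reverse inclusion $K_\cals\cap K_{\cals'}\subseteq K_{\cals\cap\cals'}$, I would use the h-description from Proposition~\ref{prop:hDescription}. Let $\delta \in K_\cals\cap K_{\cals'}$. Write $\delta = \sum_{C\in\cals^\circ} -t_C v_C$ uniquely with $t_C\geq 0$ (this is possible since $K_\cals$ is simplicial). I want to show that $t_C = 0$ whenever $C\notin\cals'$, which forces $\delta\in K_{\cals\cap\cals'}$. The key is the inclusion-exclusion formula \eqref{eqn:tind}: for $C\in\cals^\circ$ with parents $D_1,\dots,D_r$ in $\cip(\cals)$, one has $t_C = -\sum_{I\subseteq[r]}(-1)^{|I|}\delta_{D_I}$, an expression purely in terms of coordinates of $\delta$ indexed by sets in $\cip(\cals)$ that strictly contain $C$. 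The plan is to run an induction from the top of the poset $\cip(\cals)$ downward: for $C\in\cals^\circ\setminus\cals'$, I need to show the right-hand side of \eqref{eqn:tind} vanishes. The subtlety is that the sets $D_I$ appearing there live in $\cip(\cals)$, not $\cip(\cals')$, so I cannot directly invoke the h-description of $K_{\cals'}$; I must track how $\cip(\cals)$ and $\cip(\cals')$ interact.

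The main obstacle, and the heart of the argument, is exactly this comparison of the two clade-intersection posets. I expect to need the structural fact used in Lemma~\ref{lem:minSet}: among any three sets in $\cals$ (or in $\cals'$), two must be comparable or disjoint, so elements of $\cip(\cals)\setminus\cals$ are intersections of exactly two elements of $\cals$. Concretely, I would argue as follows. Since $\delta\in K_{\cals'}$, the coordinate $\delta_{ij}$ depends only on $\overline{ij}^{\,\cals'}$, the minimal set in $\cip(\cals')$ containing $ij$; in particular $\delta$ is constant on pairs $ij$ with the same $\cip(\cals')$-closure. Now take $C\in\cals^\circ\setminus\cals'$ minimal with $t_C\neq 0$ (for contradiction). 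Using \eqref{eqn:tind} for $\cals'$ applied to the minimal element $C'$ of $\cip(\cals')$ containing $C$, together with the fact that $C'\supsetneq C$ and $C'\notin$ the relevant children structure, one derives that the combination of $\delta$-coordinates computing $t_C$ actually equals $t_{C'}$-type terms for $\cals'$ that must vanish because $C\notin\cals'$. Making this precise requires checking that the parents of $C$ in $\cip(\cals)$ and the parents of $C'$ in $\cip(\cals')$ organize compatibly — this is where the "two of any three are nested or disjoint" dichotomy does the work. Once $t_C=0$ for all $C\in\cals^\circ\setminus(\cals')^\circ$, we get $\delta\in K_{(\cals\cap\cals')}$, completing the proof. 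I would budget most of the writing effort for this poset-comparison step and treat everything else as bookkeeping.
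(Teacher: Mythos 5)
The first inclusion $K_{\cals\cap\cals'} \subseteq K_\cals\cap K_{\cals'}$ is correct and essentially identical to the paper's.

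For the reverse inclusion there is a genuine gap. You correctly identify the target (show $t_C=0$ for every $C\in\cals^\circ\setminus\cals'$ in the unique expansion $\delta=\sum_C -t_C v_C$), but the step you flag as ``the heart of the argument'' --- reconciling the two posets $\cip(\cals)$ and $\cip(\cals')$ so that \eqref{eqn:tind} can be made to vanish --- is left as a sketch, and it is precisely the step that requires a new idea rather than bookkeeping. Moreover, your sketch implicitly assumes that the minimal element $C'$ of $\cip(\cals')$ containing $C$ satisfies $C'\supsetneq C$. This fails when $C\in\cip(\cals')\setminus\cals'$, i.e.\ when $C$ is the intersection of two sets of $\cals'$ without itself being in $\cals'$; that case cannot be handled by the ``$\delta_{ik}$ versus $\delta_{ij}$'' style of comparison and needs a separate argument. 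So as written the proposal does not cover all cases and does not close the case it does consider.

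The paper's own proof takes a different route that sidesteps both difficulties: it inducts on $m=|\cals\setminus\cals'|$, picks a single $C\in\cals\setminus\cals'$, and splits on whether $C\subsetneq D$ or $C=D$ where $D$ is the smallest element of $\cip(\cals')$ containing $C$. When $C\subsetneq D$, it chooses $ij\subseteq C$ with $\overline{ij}^{\cals'}=D$ (via Lemma~\ref{lem:minSet}) and a $k\in D\setminus C$, and derives the incompatible inequalities $\delta_{ik}>\delta_{ij}$ (from $t_C>0$ in $K_\cals$) and $\delta_{ik}\le\delta_{ij}$ (from $\delta\in K_{\cals'}$). When $C=D$, it uses the equation \eqref{eqn:cycle} from $F_{\cals'}$ together with a genericity/dimension count to conclude $K_\cals\cap K_{\cals'}$ lies in a proper face of $K_\cals$. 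Either way the intersection drops into a facet $K_{\cals''}$ with $\cals\cap\cals'\subseteq\cals''\subsetneq\cals$, and induction finishes. If you want to rescue your direct computation via \eqref{eqn:tind}, you would at minimum need to add the $C=D$ case and find an explicit mechanism replacing the vague poset-comparison; adopting the paper's two-case induction is cleaner.
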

\begin{proof}
 The generators of $K_{\cals \cap \cals'}$ in $\TP^{\binom{[n]}{2}-1}$ are the intersection of the generators of $K_\cals$ and $K_\cals'$.  This implies that $K_{\cals \cap \cals'} \subseteq K_\cals \cap K_{\cals'}$.

 To show that $K_{\cals} \cap K_{\cals'} \subseteq K_{\cals \cap \cals'}$, we work by induction on $m = |\cals \setminus \cals'|$.  For $m = 0$, $\cals \subseteq \cals'$ and $K_\cals \subseteq K_{\cals'}$, so the result follows.  For $m > 0$ assume the statement is true for all smaller values of $m$ and then choose $C \in \cals \setminus \cals'$.  Let $D$ be the smallest element of $\cip(\cals')$ such that $C \subseteq D$.
 
First suppose that $C \subsetneq D$.
By Lemma \ref{lem:minSet} there exists a pair $ij \subseteq C$ such that $D$ is the smallest element of $\cip(\cals')$ containing $ij$.
Fix $k \in D \setminus C$.
If $\delta \in K_S$ with $\delta = \sum_{E \in \cals^\circ} -t_E v_E$ such that $t_C > 0$,
then $\delta_{ik} > \delta_C \geq \delta_{ij}$.
However if $\delta \in K_{\cals'}$, then $\delta_{ik} \leq \delta_D = \delta_{ij}$.
Therefore $\delta \in K_\cals \cap K_{\cals'}$ has $t_C = 0$, so $K_\cals \cap K_{\cals'}$ is contained in a facet of $K_\cals$.
 
If $D = C$ then $D \in \cip(\cals') \setminus \cals'$.
Let $E_1,\ldots,E_r$ be the parents of $D$ in $\cip(S')$.
For a point $\delta \in K_{\cals'}$, $\delta_D = \delta_{E_\emptyset}$ satisfies
  \[ \sum_{I \subseteq [r]} (-1)^{|I|} \delta_{E_I} = 0. \]
 Each $E_I = \overline{i_Ij_I}$ in $\cip(\cals')$ for some pair $i_I,j_I$.  Note that for $I \neq \emptyset$, the pair $i_Ij_I$ is not contained in $C$.  For $\delta \in K_\cals$, $\delta_{i_\emptyset j_\emptyset}$ depends on the value of parameter $t_C$ since $i_\emptyset j_\emptyset$ is contained in $C$, while every other $\delta_{i_Ij_I}$ does not.  Therefore if $\delta \in K_\cals$ is generic, the equation
  \[ \delta_{ij} = -\sum_{\substack{I \subseteq [r]\\ I \neq \emptyset}} (-1)^{|I|} \delta_{i_Ij_I} \]
 is not satisfied.  Therefore $K_\cals \cap K_{\cals'}$ has strictly lower dimension than $K_\cals$, so it must be contained in a facet of $K_\cals$.
 
In either case let $K_{\cals''}$ be the facet of $K_\cals$ containing $K_\cals \cap K_{\cals'}$ so that $\cals \cap \cals' \subseteq \cals'' \subsetneq \cals$.
Since $|\cals'' \setminus \cals'| < m$, by the induction hypothesis,
\[ K_{\cals} \cap K_{\cals'} = K_{\cals''} \cap K_{\cals'} \subseteq K_{\cals'' \cap \cals'} = K_{\cals \cap \cals'}. \qedhere\]
\end{proof}

Theorem \ref{thm:simplicialComplex} follows from Proposition \ref{prop:intersectionsAsItShould} and Corollary \ref{cor:vDescription}
by sending $\cals \in \tp(n)$ to $K_\cals$.

\section{A tropical proof of Laman's Theorem}\label{sec:tropicalProofOfLaman}
We now give our tropical proof of Laman's theorem.
Lemma~\ref{lemma:tropicalPreservesAlgebraicMatroid} allows us to determine the algebraic matroid underlying $\cm_n^2$
via projections of the tropicalization of $\cm_n^2$.
Theorem~\ref{thm:tropicalLamanIsTrees} allows us to translate geometric properties of this tropical variety
into combinatorial statements about pairs of rooted trees.

Given a graph $H$, let $V(H)$ and $E(H)$ denote the vertex and edge sets of $H$.
Each graph $H$ on vertex set $[n]$ describes a coordinate projection $\pi_H:\rr^{\binom{[n]}{2}} \to \rr^{E(H)}$.
Moreover, Lemma \ref{lemma:indSetCM}(3) and Lemma~\ref{lemma:tropicalPreservesAlgebraicMatroid}
imply that $H$ is generically rigid in $\rr^d$ if and only if $\pi_H(\trop(\cm_n^d))$ has the maximal dimension, $dn - \binom{d+1}{2}$.
For a tree $T$, define the matrix $M_T^H$ to be the submatrix of $M_T$ obtained by taking only the rows corresponding to $E(H)$.
The cone $\pi_H(K_T) \subseteq \pi_H(U_n)$ has linear hull equal to the span of $M_T^H$.
Define the {\em restricted clade graph} of $T_1$ and $T_2$ to be the subgraph
$G_{T_1,T_2}^H$ of $G_{T_1,T_2}$ on the same vertex set
whose edge set $E = \{e_{ij} : \{i,j\} \in E(H)\}$ has $e_{ij}$ connecting the minimal clades of $T_1$ and $T_2$ that contain $ij$.
For $S\subseteq [n]$, let $c_{T_i}(S)$ denote the smallest clade of $T_i$ containing $S$.
For each edge $ij \in E(H)$, note that $e_{ij}$ connects $c_{T_1}(ij)$ to $c_{T_2}(ij)$.
Now we give the analog of Proposition \ref{prop:cladeGraphGivesDimension} for coordinate projections.


\begin{prop}\label{prop:tropicalMinRigidCondition}
    A graph $H$ is minimally generically rigid in $\rr^2$ if and only if there is a pair of rooted binary trees $T_1,T_2$ such that $G_{T_1,T_2}^H$ is a tree.
\end{prop}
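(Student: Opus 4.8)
The plan is to reduce the statement to a short counting argument about the \emph{restricted clade graphs} $G_{T_1,T_2}^H$. First I would assemble the dictionary: by the discussion preceding the proposition, $H$ is generically rigid in $\rr^2$ exactly when $\pi_H(\trop(\cm_n^2))$ has dimension $2n-3$, and by Theorem~\ref{thm:tropicalLamanIsTrees} together with the simplicial fan structure on $U_n$ (Theorem~\ref{thm:simplicialComplex1}) we have $\trop(\cm_n^2) = U_n + U_n = \bigcup_{T_1,T_2}(K_{T_1}+K_{T_2})$, the union over all pairs of rooted trees. Since $\pi_H$ is linear and $K_{T_1}+K_{T_2}$ is full-dimensional in its linear hull, $\dim \pi_H(K_{T_1}+K_{T_2})$ equals the rank of $\begin{pmatrix} M_{T_1}^H & M_{T_2}^H\end{pmatrix}$; as this is the vertex--edge incidence matrix of $G_{T_1,T_2}^H$, its rank is that of the graphic matroid of $G_{T_1,T_2}^H$, namely $|V(G_{T_1,T_2}^H)| - c(G_{T_1,T_2}^H)$, where $c(\cdot)$ denotes the number of connected components. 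Taking the union over tree pairs,
\[ \dim \pi_H(\trop(\cm_n^2)) \;=\; \max_{T_1,T_2}\bigl( |V(G_{T_1,T_2}^H)| - c(G_{T_1,T_2}^H)\bigr). \]
Because a rooted tree on $n$ leaves has at most $n-1$ clades, with equality precisely when it is binary, and $c(\cdot)\ge 1$ always, the right-hand side is at most $2(n-1)-1 = 2n-3$, and any maximizing pair must consist of two binary trees with $G_{T_1,T_2}^H$ connected.

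Next I would invoke Lemma~\ref{lemma:indSetCM}(2),(3) and Lemma~\ref{lemma:tropicalPreservesAlgebraicMatroid} to record that $H$ is minimally generically rigid in $\rr^2$ if and only if $E(H)$ is a basis of the algebraic matroid of $\cm_n^2$, equivalently $H$ is generically rigid and $|E(H)| = 2n-3$. Both directions then follow from counting, treating $G := G_{T_1,T_2}^H$ as a multigraph whose edge set is indexed by $E(H)$ and which has $|\clade(T_1)| + |\clade(T_2)|$ vertices. For the ``if'' direction: if $T_1,T_2$ are binary and $G$ is a tree, then $|V(G)| = 2n-2$, so $G$ has exactly $2n-3$ edges and no parallel edges; hence $|E(H)| = 2n-3$, and $\dim\pi_H(\trop(\cm_n^2)) \ge \rank(G) = 2n-3$, which forces equality (the quantity is always at most $2n-3$) and thus generic rigidity, so $H$ is minimally rigid. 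For the ``only if'' direction: if $H$ is minimally rigid then $\dim\pi_H(\trop(\cm_n^2)) = 2n-3$, so some pair $T_1,T_2$ attains the maximum above; by the previous paragraph this pair is binary with $G$ connected on $2n-2$ vertices, and since the number of distinct edges of $G$ is at most $|E(H)| = 2n-3 = |V(G)|-1$, the connected graph $G$ has at most $|V(G)|-1$ edges and is therefore a tree.

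I expect no deep obstacle here: the substance lies in the machinery already established (Theorem~\ref{thm:tropicalLamanIsTrees}, Lemma~\ref{lemma:tropicalPreservesAlgebraicMatroid}, and the incidence-matrix description of $K_{T_1}+K_{T_2}$), and what remains is careful bookkeeping. The points I would be most careful about are: treating $G_{T_1,T_2}^H$ as a multigraph with edge set indexed by $E(H)$, so that ``$G_{T_1,T_2}^H$ is a tree'' genuinely forces both connectedness and the exact count $|E(H)| = 2n-3$; checking that $\pi_H(K_{T_1}+K_{T_2})$ is full-dimensional in its linear hull, so its dimension is the matrix rank and not merely bounded by it; and using the elementary fact that a connected graph on $m$ vertices with at most $m-1$ edges is a tree. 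The one genuinely substantive move is letting the maximality of $\dim\pi_H(\trop(\cm_n^2))$ and the exact count $|E(H)| = 2n-3$ work together to pin $G_{T_1,T_2}^H$ down to a tree rather than merely a spanning connected subgraph.
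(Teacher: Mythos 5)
Your proposal is correct and follows essentially the same route as the paper: translate minimal generic rigidity to $|E(H)| = 2n-3$ together with $\dim\pi_H(\trop(\cm_n^2)) = 2n-3$ via Lemmas~\ref{lemma:indSetCM} and~\ref{lemma:tropicalPreservesAlgebraicMatroid}, identify $\dim\pi_H(K_{T_1}+K_{T_2})$ with the rank of the vertex--edge incidence matrix of $G_{T_1,T_2}^H$, and use the rank formula $|V|-c$ for bipartite graphs. One small stylistic difference worth flagging: the paper's proof silently restricts attention to pairs of \emph{binary} trees, leaving implicit why a maximizing cone of $\trop(\cm_n^2)$ can always be assumed to come from a binary pair; your version makes this explicit via the bound $\max_{T_1,T_2}(|V|-c) \le 2(n-1)-1 = 2n-3$, with equality forcing both trees binary and $G$ connected, which is a slightly tighter bookkeeping of the quantifier over tree pairs and closes that small gap cleanly.
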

\begin{proof}
    By Lemma \ref{lemma:tropicalPreservesAlgebraicMatroid}, it suffices to show that $|E(H)| = 2n-3$ and $\pi_H(\trop(\cm_n^2))$
    has dimension $2n-3$ if and only if there are rooted binary trees $T_1,T_2$ such that $G_{T_1,T_2}^H$ is a tree.
    A rooted binary tree has exactly $n-1$ clades, so $G_{T_1,T_2}^H$ has $2n-2$ vertices.  It is a tree if and only if it is connected and has $2n-3$ edges.
    
    The edge sets of $H$ and $G_{T_1,T_2}^H$ are in bijection, so one has size $2n-3$ if and only if the other does.
    The dimension of $\pi_H(\trop(\cm_n^2)$ will be $2n-3$ if and only if there exists a cone $K_{T_1} + K_{T_2}$ of $\trop(\cm_n^2)$
    such that $\pi_H(K_{T_1} + K_{T_2})$ has dimension $2n-3$.
    The linear hull of $\pi_H(K_{T_1} + K_{T_2})$ is the column span of the adjacency matrix of $G_{T_1,T_2}^H$
    and so the dimension $\pi_H(K_{T_1}+K_{T_2})$ is the rank of this adjacency matrix.
    The rank of the adjacency matrix of a bipartite graph is the number of vertices minus the number of connected components.
    Therefore the adjacency matrix of $G_{T_1,T_2}^H$ has rank $2n-3$ if and only if $G_{T_1,T_2}^H$ is connected.
\end{proof}
 

To reprove Laman's theorem, it remains to show that the graphs $H$ satisfying the condition of Proposition \ref{prop:tropicalMinRigidCondition} are precisely the Laman graphs.
We will do this via the \emph{Henneberg moves},
which were shown by Henneberg in 1911 \cite{henneberg1911graphische} to generate precisely
the graphs which are minimally generically rigid in the plane.
We now define two conditions a graph can satisfy, then use our combinatorial description of $\trop(\cm_n^2)$ to show they are
both equivalent to the property of being generically minimally rigid in the plane. 

\begin{defn}
    Let $H$ be a graph with vertex set $[n]$ and edge set $E$.
    We say that $H$ is
    \begin{itemize}
    \item \emph{Laman} if $H$ has $2n-3$ edges and every subgraph of $H$ with $v$ vertices has at most $2v-3$ edges,
        \item \emph{Henneberg} if $H$ is the complete graph $K_2$, or $H$ can be obtained from a smaller Henneberg graph by either of the two \emph{Henneberg moves},
        which are
            \begin{enumerate}
                \item\label{hen1} adding a new vertex adjacent to two existing vertices, and
                \item\label{hen2} removing an edge $ij$ and adding a new vertex that is adjacent to $i$ and $j$ and some other vertex.
            \end{enumerate}
    \end{itemize}
\end{defn}

\begin{lemma}\label{lem:LtoH}
    If $H$ is Laman, then $H$ is Henneberg.
\end{lemma}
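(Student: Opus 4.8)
The plan is to prove Lemma~\ref{lem:LtoH} by induction on $n = |V(H)|$, with base case $n = 2$, where $H = K_2$ is Henneberg by definition. For the inductive step, let $H$ be Laman with $n \ge 3$ vertices. Its degree sum is $2(2n-3) = 4n-6$, so some vertex has degree at most $3$, while if a vertex $v$ had degree at most $1$ then $H - v$ would have $n - 1$ vertices and more than $2(n-1)-3$ edges, violating the sparsity condition in the definition of Laman. So $H$ has a vertex $v$ of degree $2$ or $3$. If $\deg(v) = 2$, then $H - v$ has $n - 1$ vertices, $2(n-1) - 3$ edges, and inherits the sparsity condition, so it is Laman; by induction it is Henneberg, and $H$ is obtained from it by the first Henneberg move, so $H$ is Henneberg.

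The substantial case is $\deg(v) = 3$, with neighbors $x, y, z$. I aim to find a pair among $\{x,y,z\}$, say $\{x,y\}$, that is a non-edge of $H$ for which $H' := (H - v) + xy$ is still Laman; then $H'$ is Henneberg by induction, $H$ is recovered from $H'$ by the second Henneberg move (delete $xy$, add $v$ adjacent to $x$, $y$, $z$), and we are done. Call $S \subseteq V(H)$ with $|S| \ge 2$ \emph{tight} if it induces exactly $2|S| - 3$ edges. Since $H - v$ inherits the sparsity condition, $(H-v) + xy$ is Laman unless $xy \in E(H)$ or some tight set $S$ has $x, y \in S$ and $v \notin S$; so it suffices to rule out the scenario in which every pair among $\{x,y,z\}$ is obstructed in one of these two ways.

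To rule that scenario out I would use: (i) the function $S \mapsto 2|S| - 3 - e(S)$, $e(S) =$ number of induced edges, is nonnegative and submodular on sets of size $\ge 2$, so the union of two tight sets meeting in $\ge 2$ vertices is tight, and the union of two tight sets meeting in one vertex is tight as soon as an edge of $H$ crosses between them; (ii) if $S$ is tight and $v \notin S$, then $v$ has at most two neighbors in $S$ (else $S \cup \{v\}$ spans $e(S) + 3 = 2|S|$ edges on $|S| + 1$ vertices, violating sparsity), so an obstructing tight set omits the third neighbor of $v$. Now suppose all three pairs are obstructed. They are not all edges, since $\{v,x,y,z\}$ would then span six edges on four vertices. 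If one or two pairs are edges, adjoining the missing neighbor to an obstructing tight set yields, by (ii), a tight set $W \supseteq \{x,y,z\}$ with $v \notin W$. If all three pairs are non-edges, with obstructing tight sets $S_{xy}, S_{xz}, S_{yz}$: should two of them meet in $\ge 2$ vertices, their union is such a $W$; otherwise (ii) forces the three pairwise intersections to be the singletons $\{x\}, \{y\}, \{z\}$, so the induced edge sets $E(H[S_{xy}]), E(H[S_{xz}]), E(H[S_{yz}])$ are pairwise disjoint, and counting shows they total $2\,|S_{xy} \cup S_{xz} \cup S_{yz}| - 3$, so their union is such a $W$. In every case $W \cup \{v\}$ spans $e(W) + 3 = 2|W|$ edges on $|W| + 1$ vertices, contradicting sparsity. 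Hence some pair can be added and the induction goes through.

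The step I expect to be the main obstacle is this final case analysis: showing that obstructing tight sets can never conspire to block all three candidate edges at a degree-$3$ vertex. The decisive observation is that when the obstructing sets intersect minimally their induced edge sets are disjoint, so their union already carries $2|W| - 3$ edges and is tight — and tightness of $W$ together with the three edges to $v$ then exceeds the sparsity bound. The easy sub-cases (some pair an edge, or two obstructing sets meeting in two or more vertices) follow immediately from (i) and (ii).
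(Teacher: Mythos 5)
Your proof is correct, and at the top level it follows the same strategy as the paper (induction via Henneberg moves, splitting on whether the minimum degree is $2$ or $3$). The difference is in how carefully the degree-$3$ case is handled, and here your version is strictly more careful. The paper picks a non-edge $12$ of the triangle $\{1,2,3\}$ arbitrarily and argues that if $H' = (H-v)+12$ were not Laman, the obstructing subgraph $H''$ with $2k-2$ edges would yield a violation in $H$ by deleting $12$ and reattaching $v$ to $1,2,3$; but that final step tacitly assumes $3\in V(H'')$, which is not forced (if $3\notin V(H'')$ one only gets $k+1$ vertices and $2(k+1)-3$ edges, which is not a violation). Your proposal closes exactly this gap by the standard tight-set argument: you observe that a non-edge $xy$ is blocked only by a tight set containing $x,y$ but not $v$, that such a tight set cannot contain the third neighbor of $v$ (else adding $v$ already violates sparsity), and then you use submodularity of $S\mapsto 2|S|-3-e(S)$ — unions of tight sets meeting in $\ge 2$ vertices, or meeting in one vertex with a crossing edge — together with the edge-disjointness count in the all-non-edge case to manufacture a tight $W\supseteq\{x,y,z\}$ with $v\notin W$, whence $W\cup\{v\}$ violates sparsity. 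That correctly shows at least one candidate non-edge is unobstructed, which is precisely what the paper needed but did not argue. Minor presentation point: the phrase ``adjoining the missing neighbor to an obstructing tight set'' in the one-or-two-edges subcase is a bit opaque; it is cleaner to note that an edge $xy$ is itself a tight set $\{x,y\}$ meeting the other obstructing sets in the right way, or to take the union $S_{xz}\cup S_{yz}$ directly and use the crossing edge $xy$ — but the underlying reasoning is sound.
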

\begin{proof}
    This is well-known (see e.g. \cite{graver2008combinatorial}) but we provide a proof anyway to keep our
    proof of Laman's theorem self-contained.
    So let $H$ be a Laman graph.  We work by induction on $n$.
    If $n=2$ then $H = K_2$ which is Henneberg, so assume $H$ has at least three vertices.
    It is easy to check that since $H$ is Laman, each vertex has degree at least $2$.
    Assume $H$ has a vertex $v$ of degree exactly $2$.
    Then $H\setminus \{v\}$ is Laman, and therefore Henneberg by the induction hypothesis.
    $H$ can be obtained from $H\setminus \{v\}$ by attaching $v$ via the first Henneberg move.

    Now assume the minimum degree of $H$ is at least $3$.
    Since $H$ has $2n-3$ edges, some vertex $v$ must have degree $3$.
    Denote the neighbors of $v$ by $1,2,3$.
    If $123v$ is a clique then $H$ is not Laman, so there must be at least one edge missing which we take to be $12$.
    Let $H'$ be the graph obtained from $H\setminus \{v\}$ by adding the edge $12$.
    If $H'$ it not Laman, it has a strict subgraph $H''$ with $k$ vertices and $2k-2$ edges that includes the edge $12$.
    But then $H$ would violate the Laman condition as well, since the graph obtained from $H''$ by
    removing the edge $12$ and connecting $v$ to $1,2,3$
    would be a subgraph of $H$ containing $k+1$ vertices and $2(k+1)-2$ edges.
    So by the induction hypothesis, $H'$ is Henneberg and $H$ can be obtained from $H'$ via the second Henneberg move.
\end{proof}

Given a rooted tree $T$ on leaf set $[n]$,
the {\em restriction} of $T$ to $S \subseteq [n]$ is the rooted tree $T'$ obtained from the induced subtree of $T$ with leaves $S$ and their ancestors,
contracting away degree 2 vertices.
If $d$ is an ultrametric with tree topology $T$, then the restriction of $T$ to $S$ is the topology of the restriction of $d$ to the coordinates $\binom{S}{2}$.
Now let $H$ be a graph on vertex set $[n]$,
let $T_1,T_2$ be rooted trees on leaf set $[n]$, and let $H'$ be a subgraph of $H$.
If $T'_1,T'_2$ are the restrictions of $T_1,T_2$ to $V(H')$, the natural inclusion map $\eta:H' \to H$ induces an injective graph homomorphism
 \[ \tilde{\eta}: G_{T'_1,T'_2}^{H'} \to G_{T_1,T_2}^H \]
by sending clade $C$ of $T'_i$ to $c_{T_i}(C)$.  To see that $\tilde{\eta}$ maps edges to edges, note that for each $jk \in E(H')$, $\tilde{\eta}(c_{T'_i}(jk)) = c_{T_i}(jk)$, so the edge $e_{jk}$ of $G_{T'_1,T'_2}^{H'}$ goes to $e_{jk}$ of $G_{T_1,T_2}^{H}$.

\begin{ex}
    Let $H$ be the graph on vertex set $\{1,2,3,4\}$ pictured below
    and let $T_1$ and $T_2$ be as in Example \ref{ex:treeConstruction}.
    Let $H'$ be the subgraph of $H$ induced on vertex set $\{2,3,4\}$.
    Then $\eta$ is the inclusion of $H'$ in $H$,
    and $\tilde\eta$ maps each vertex labeled $234$ in $G_{T_1',T_2'}^{H'}$
    to the vertex on the corresponding side labeled $1234$ in $G_{T_1,T_2}^H$,
    maps the vertex labeled $23$ in $G_{T_1',T_2'}^{H'}$ to the vertex labeled $123$ in $G_{T_1,T_2}^H$,
    and maps the vertex labeled $24$ in $G_{T_1',T_2'}^{H'}$ to the vertex with the same label in $G_{T_1,T_2}^H$.
    Note that $\tilde\eta$ is a graph homomorphism.
    \begin{center}
        \begin{tikzpicture}
            \node at (-1.2,0.5){$H=$};
            \vertex (1) at (0,0)[label=left:$1$]{};
            \vertex (4) at (1,0)[label=right:$4$]{};
            \vertex (2) at (1,1)[label=right:$2$]{};
            \vertex (3) at (0,1)[label=left:$3$]{};
            \path
                (1) edge (2) edge (3) edge (4)
                (2) edge (3) edge (4)
            ;
        \end{tikzpicture}
        \qquad
        \begin{tikzpicture}
            \node at (-1.2,0.5){$H'=$};
            \vertex (4) at (1,0)[label=right:$4$]{};
            \vertex (2) at (1,1)[label=right:$2$]{};
            \vertex (3) at (0,1)[label=left:$3$]{};
            \path
                (2) edge (3) edge (4)
            ;
        \end{tikzpicture}
        \qquad
        \begin{tikzpicture}
            \node at (-.5,.5){$T_1'=$};
            \node [below] (b) at (0,0){$2$};
            \node [below] (c) at (1,0){$3$};
            \node [below] (d) at (2,0){$4$};
            \draw (0,0) -- (1/2,1/2);
            \draw (1,0) -- (1/2,1/2);
            \draw (1/2,1/2) -- (1,1);
            \draw (2,0) -- (1,1);
        \end{tikzpicture}
        \qquad
        \begin{tikzpicture}
            \node at (-.5,.5){$T_2'=$};
            \node [below] (b) at (0,0){$3$};
            \node [below] (c) at (1,0){$2$};
            \node [below] (d) at (2,0){$4$};
            \draw (0,0) -- (1,1);
            \draw (1,0) -- (3/2,1/2);
            \draw (2,0) -- (1,1);
        \end{tikzpicture}
        \\ \ \\
        \begin{tikzpicture}
            \node at (0,-.75){};
            \node at (-2,.5){$G^{H'}_{T_1',T_2'}=$};
            \vertex (23) at (0,0)[label=left:$23$]{};
            \vertex (234) at (0,1)[label=left:$234$]{};
            \vertex (24) at (1,0)[label=right:$24$]{};
            \vertex (234') at (1,1)[label=right:$234$]{};
            \path
                (23) edge (234')
                (234) edge (24)
            ;
        \end{tikzpicture}
        \qquad
        \begin{tikzpicture}
            \node at (-2,1){$G^H_{T_1,T_2}=$};
            \vertex (12) at (0,0)[label=left:$12$]{};
            \vertex (123) at (0,1)[label=left:$123$]{};
            \vertex (1234) at (0,2)[label=left:$1234$]{};
            \vertex (14) at (2,0)[label=right:$24$]{};
            \vertex (23) at (2,1)[label=right:$13$]{};
            \vertex (1234') at (2,2)[label=right:$1234$]{};
            \path
                (12) edge (1234')
                (123) edge (1234')
                (123) edge (23)
                (1234) edge (1234')
                (1234) edge (14)
            ;
        \end{tikzpicture}
    \end{center}
\end{ex}

\begin{lemma}\label{lem:TtoL}
 If $H$ is not a Laman graph, then $G_{T_1,T_2}^H$ is not a tree for any choice of pair of rooted trees $T_1,T_2$.
\end{lemma}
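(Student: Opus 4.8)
The plan is to argue directly, splitting on how $H$ fails to be Laman, and in the principal case to exhibit a cycle in $G_{T_1,T_2}^H$ using the injective graph homomorphism $\tilde\eta$ constructed above, which carries a cycle found in a restricted clade graph of a subgraph of $H$ over to $G_{T_1,T_2}^H$ itself.

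Since $H$ is not Laman, either (i) some subgraph of $H$ on $v$ vertices has more than $2v-3$ edges, or else (ii) $H$ obeys the sparsity bound and, not being Laman, has $|E(H)|<2n-3$. In case (i) we may take the violating subgraph to be induced (this only adds edges), so there is $S\subseteq[n]$ with $v:=|S|$ such that $H':=H[S]$ has at least $2v-2$ edges; taking $S=[n]$ covers $|E(H)|>2n-3$. Let $T_1',T_2'$ be the restrictions of $T_1,T_2$ to $S$; each is a rooted tree on $v$ leaves, hence has at most $v-1$ clades, so $G_{T_1',T_2'}^{H'}$ has at most $2v-2$ vertices, while its edges are indexed by $E(H')$ and so number at least $2v-2$. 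A multigraph with at least as many edges as vertices contains a cycle $\gamma$. Since $\tilde\eta\colon G_{T_1',T_2'}^{H'}\to G_{T_1,T_2}^H$ is injective on vertices and sends each $e_{jk}$ to $e_{jk}$, and distinct edges of $H'$ index distinct edges of $G_{T_1,T_2}^H$, the image $\tilde\eta(\gamma)$ is again a cycle; hence $G_{T_1,T_2}^H$ is not a tree. (Alternatively, case (i) follows from the tropical machinery: a tree $G_{T_1,T_2}^H$ would force $\dim\pi_H(\trop(\cm_n^2))=|E(H)|$, arguing as in Proposition~\ref{prop:tropicalMinRigidCondition}, so $E(H)$ would be independent in the algebraic matroid of $\cm_n^2$ by Lemma~\ref{lemma:tropicalPreservesAlgebraicMatroid}, contradicting Lemma~\ref{lemma:indSetCM}(1).)

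For case (ii) I would instead use the numerology of a tree directly: if $G_{T_1,T_2}^H$ were a tree, then since its edges are indexed by $E(H)$ and its vertices are $\clade(T_1)\sqcup\clade(T_2)$, we would have $|E(H)|=|\clade(T_1)|+|\clade(T_2)|-1$, which equals $2n-3$ when $T_1$ and $T_2$ are binary, contradicting (ii); so $G_{T_1,T_2}^H$ is not a tree for binary $T_1,T_2$. I expect the genuinely delicate point to lie here: for arbitrary rooted trees one must further rule out a tree $G_{T_1,T_2}^H$ in which $T_1$ or $T_2$ is non-binary, equivalently with $|E(H)|<2n-3$. This is of no consequence for the application, though --- when this lemma is combined with Proposition~\ref{prop:tropicalMinRigidCondition} to reprove Laman's theorem, the trees $T_1,T_2$ come out binary and $H$ has $2n-3$ edges, so only case (i), and hence only the $\tilde\eta$ argument, is actually invoked.
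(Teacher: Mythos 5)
Your argument has the same shape as the paper's: split on how the sparsity condition fails, and in the substantive case push a cycle from the restricted clade graph of a violating subgraph into $G_{T_1,T_2}^H$ via $\tilde\eta$. Your case~(i) is essentially the paper's argument (the paper splits on $|E(H)|\ne 2n-3$ versus $|E(H)|=2n-3$ with a dense subgraph; by allowing $S=[n]$, your case~(i) absorbs the $>2n-3$ subcase), and it is correct --- including the small improvement of writing ``at most $2v-2$ vertices'' where the paper asserts exactly $2n'-2$, which is only right when the restricted trees are binary.

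You have also put your finger on a genuine subtlety that the paper elides. The paper's proof opens with the claim that when $|E(H)|\ne 2n-3$, the graph $G_{T_1,T_2}^H$ ``has the wrong number of edges to be a tree.'' For binary $T_1,T_2$ this is immediate, since $G_{T_1,T_2}^H$ then has exactly $2n-2$ vertices. But for arbitrary rooted trees the lemma, read literally, is false. For instance, with $n=4$ take $H$ with $E(H)=\{12,34,13\}$ (so $|E(H)|=3<5$ and $H$ is not Laman), and let $T_1$ be the rooted tree with $\clade^\circ(T_1)=\{12\}$ and $T_2$ the rooted tree with $\clade^\circ(T_2)=\{34\}$. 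Then $G_{T_1,T_2}^H$ has four vertices, $12$ and $1234$ on the $T_1$ side and $34'$, $1234'$ on the $T_2$ side, and the three edges $e_{12}=(12,1234')$, $e_{13}=(1234,1234')$, $e_{34}=(1234,34')$ form the path $12$--$1234'$--$1234$--$34'$, which is a tree. So some binarity hypothesis is genuinely required, exactly as you suspected. As you also correctly observe, this does no harm to the application: the Laman-theorem equivalence only uses the implication starting from item~(3), which is phrased for binary trees, and with binarity your numerology in case~(ii) closes the argument.

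One small correction to your last sentence: both your case~(i) and the binary branch of case~(ii) are actually invoked. In contrapositive form, $(3)\Rightarrow(1)$ must dispose of every non-Laman $H$, and those with $|E(H)|\ne 2n-3$ are exactly the ones handled by case~(ii); it is not true that only case~(i) is needed. Since you do prove case~(ii) for binary trees, your proof is complete for the intended application.
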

\begin{proof}
If $H$ has $n$ vertices but does not have $2n-3$ edges, then $H$ is not Laman and $G_{T_1,T_2}^H$ has the wrong number of edges to be a tree.
Suppose then that $H$ has $2n-3$ edges but is not Laman.
Then $H$ has a subgraph $H'$ with $n'$ vertices such that $H'$ has more than $2n'-3$ edges.
For any choice of trees $T_1,T_2$, let $T'_1,T'_2$ be the respective restrictions to $V(H')$.  Since $G_{T'_1,T'_2}^{H'}$ has $2n'-2$ vertices and more than $2n'-3$ edges, it must contain a cycle.  The graph homomorphism $\tilde{\eta}:G_{T'_1,T'_2}^{H'} \to G_{T_1,T_2}^{H}$ shows that $G_{T_1,T_2}^{H}$ must also contain a cycle.
\end{proof}

\begin{lemma}\label{lem:HtoT}
 If $H$ is Henneberg, then there exists a pair of rooted binary trees $T_1,T_2$ such that $G_{T_1,T_2}^H$ is a tree.
\end{lemma}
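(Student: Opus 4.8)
The plan is to induct on the number of Henneberg moves used to build $H$ (equivalently, on $n = |V(H)|$), showing that a valid tree-pair for a smaller Henneberg graph extends to one for $H$. Throughout I use that a rooted binary tree on $m$ leaves has exactly $m-1$ clades, that every Henneberg graph on $n$ vertices has exactly $2n-3$ edges, and that $E(H)$ is in bijection with the edge set of $G_{T_1,T_2}^H$; so $G_{T_1,T_2}^H$ always has $2n-2$ vertices and $2n-3$ edges, hence is a tree precisely when it is connected. For the base case $H = K_2$, take $T_1 = T_2$ to be the rooted tree on two leaves with a single internal node; its only clade is $\{1,2\}$, so $G_{T_1,T_2}^{K_2}$ is a single edge $e_{12}$ joining the two copies of $\{1,2\}$, which is a tree.

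For the inductive step with the first Henneberg move, $H$ is obtained from a Henneberg graph $H_0$ on $[n-1]$ by adding a vertex $n$ adjacent to $a,b \in [n-1]$; let $T_1^0,T_2^0$ be a binary tree-pair for $H_0$ with $G_{T_1^0,T_2^0}^{H_0}$ a tree. I would form $T_1$ (resp.\ $T_2$) by attaching $n$ as a new sibling of the leaf $a$ (resp.\ $b$); this adds a single internal node, so both trees remain binary. Passing from $T_1^0$ to $T_1$ leaves every clade unchanged except that the ancestor-clades of $a$ each gain the element $n$, and adds one new clade $\{a,n\}$; hence $\clade(T_1)$ is a relabelled copy of $\clade(T_1^0)$ with one extra element, and likewise for $T_2$. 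Tracking clades, the edges of $G_{T_1,T_2}^H$ indexed by $E(H_0)$ form a relabelled copy of the tree $G_{T_1^0,T_2^0}^{H_0}$, while the two new edges attach pendant vertices: $e_{an}$ joins the new vertex $\{a,n\}$ (on the $T_1$ side) to $c_{T_2^0}(ab)\cup\{n\}$, which is a relabelled old vertex, and symmetrically $e_{bn}$ attaches $\{b,n\}$. Thus $G_{T_1,T_2}^H$ is the tree $G_{T_1^0,T_2^0}^{H_0}$ with two pendant vertices attached, a tree.

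For the inductive step with the second Henneberg move, $H$ is obtained from a Henneberg graph $H_0$ on $[n-1]$ with $ij \in E(H_0)$ by deleting $ij$, adding a vertex $n$, and adding edges $in$, $jn$, $kn$ with $k \in [n-1]\setminus\{i,j\}$. Fix a binary tree-pair $T_1^0,T_2^0$ with $G_{T_1^0,T_2^0}^{H_0}$ a tree; there the edge $e_{ij}$ joins $\gamma_1 := c_{T_1^0}(ij)$ and $\gamma_2 := c_{T_2^0}(ij)$, and deleting it gives two subtrees $P \ni \gamma_1$ and $Q \ni \gamma_2$. The natural first attempt is to attach $n$ as a sibling of $i$ in $T_1$ and of $j$ in $T_2$. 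Tracking clades as before, the edges indexed by $E(H_0)\setminus\{ij\}$ form a relabelled copy of $G_{T_1^0,T_2^0}^{H_0} - e_{ij}$ (components $P'$ and $Q'$), the edge $e_{in}$ attaches the new vertex $\{i,n\}$ as a pendant onto the relabelled $\gamma_2 \in Q'$, the edge $e_{jn}$ attaches $\{j,n\}$ as a pendant onto the relabelled $\gamma_1 \in P'$, and $e_{kn}$ joins the relabelled vertices $c_{T_1^0}(ik)$ and $c_{T_2^0}(jk)$. Hence $G_{T_1,T_2}^H$ is connected — and so a tree — exactly when $c_{T_1^0}(ik)$ and $c_{T_2^0}(jk)$ lie in different components of $G_{T_1^0,T_2^0}^{H_0} - e_{ij}$.

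I expect establishing this last separation property to be the main obstacle, and the place where a case analysis enters. The triple $\{i,j,k\}$ determines an ``apex leaf'' in each of $T_1^0$ and $T_2^0$ — the unique leaf $x$ of the triple for which $c(xy) = c(xz) \supsetneq c(yz)$ for the other two leaves $y,z$ — and I plan to split into cases according to the apex leaves of $\{i,j,k\}$ in $T_1^0$ and in $T_2^0$, using the symmetry that $i$ and $j$ are interchangeable in the move and that $T_1,T_2$ are interchangeable to cut down the number of cases. In each case one either keeps the naive choice above or instead attaches $n$ as a sibling of a different leaf of the triple (after possibly relabelling $i \leftrightarrow j$ or swapping $T_1 \leftrightarrow T_2$), and the hands-on work is to check, from the laminar structure of clades, that the resulting edge $e_{kn}$ really does join the two components $P$ and $Q$. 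Granting that, the second Henneberg step concludes as above, and together with the base case and the first Henneberg step this proves the lemma.
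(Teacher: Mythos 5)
Your outline matches the paper's proof almost exactly: induction on Henneberg moves, attaching $n$ to both trees, showing $G^H_{T_1,T_2}$ has $2n-2$ vertices and $2n-3$ edges so that connectivity is the only issue, and using the homomorphism $\tilde\eta$ from the smaller clade graph into the larger. The base case and the first Henneberg move are correct and essentially identical to the paper's treatment.

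However, there is a genuine gap in the second Henneberg move: the case analysis that you defer with ``Granting that'' is precisely the substance of the proof. You correctly reduce the problem to choosing attachment points $C_1, C_2$ so that exactly one of the new edges $e_{in}, e_{jn}, e_{kn}$ connects the two components of $G^{H_0}_{T_1^0,T_2^0} - e_{ij}$, with the other two attaching the two new clade-vertices as pendants. But you do not actually carry out the verification that such a choice always exists, and this is not a routine check. The paper splits into three cases according to the apex of $\{1,2,3\}$ in each tree and to which component each of $c_{T'_1}(123)$, $c_{T'_2}(123)$, $c_{T'_1}(12)$, $c_{T'_2}(12)$ falls into; the naive choice $(C_1,C_2)=(\{i\},\{j\})$ fails in the paper's Case 2, where $c_{T'_1}(ik)$ and $c_{T'_2}(jk)$ land in the same component, so a nontrivial reselection is forced. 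Note also that the paper does not always attach $n$ as a sibling of a leaf: in Cases 1 and 3 it takes $C_2 = c_{T'_2}(12)$ or $C_2 = c_{T'_2}(13)$, which are typically internal clades. Your plan to restrict to $C_1, C_2 \in \{\{i\},\{j\},\{k\}\}$ can in fact be made to work (a short parity argument on the four bits recording which component each of $c_{T_1^0}(ik)$, $c_{T_2^0}(jk)$, $c_{T_1^0}(jk)$, $c_{T_2^0}(ik)$ lies in shows that one of the six ordered leaf-pairs always gives a bridge), but that argument, or the paper's three-case version, must actually be written out. Also note that the reconnecting edge need not be $e_{kn}$: in the paper's Cases 2 and 3 it is $e_{2n}$ that bridges the components, so your phrasing ``check that $e_{kn}$ joins the two components'' should be loosened to ``check that one of $e_{in}, e_{jn}, e_{kn}$ joins the two components and the other two attach pendants.''
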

\begin{proof}
If $H$ is Hennberg with $n$ vertices, then it has $2n-3$ edges, and so $G_{T_1,T_2}^H$ also has $2n-3$ edges.  Then to prove that $G_{T_1,T_2}^H$ is tree, we show that it has $2n-2$ vertices and is connected.

We work by induction on $n$.
In the base case $n=2$, the only Henneberg graph is $H= K_2$.
Let $T_1 = T_2$ be the unique rooted binary tree on two leaves.
Then $G_{T_1,T_2}^H$ has two vertices, one for the clade $12$ in each tree, connected by edge $e_{12}$.
For $n > 2$, $H$ can be obtained (after relabeling) from a Henneberg graph $H'$ on $[n-1]$ by one of the Henneberg moves.
By the induction hypothesis, there are rooted binary trees $T'_1,T'_2$ such that $G_{T'_1,T'_2}^{H'}$ is connected.
 
First suppose that $H$ is obtained from $H'$
by a Henneberg move of type (\ref{hen1}) by adding vertex $n$ and connecting it to vertices 1 and 2 (without loss of generality). 
Let $T_1$ be the tree obtained from $T'_1$ attaching $n$ so that $1n$ becomes a clade.
Let $T_2$ be obtained from $T'_2$ by attaching $n$ so that $2n$ becomes a clade.
Since $H'$ is a subgraph of $H$ and $T'_i$ is the restriction of $T_i$ to $[n-1]$,
there is graph homomorphism $\tilde{\eta}:G_{T'_1,T'_2}^{H'} \to G_{T_1,T_2}^{H}$ defined as above.
Therefore $\tilde{\eta}(G_{T'_1,T'_2}^{H'})$ is connected.
$G_{T_1,T_2}^{H}$ has exactly two new clades not in the image of $\tilde{\eta}$, which are $1n$ in $T_1$ and $2n$ in $T_2$.
It also has two new edges, $e_{1n}$ connecting $1n$ to $c_{T_2}(1n) = \tilde{\eta}(c_{T'_2}(12))$ and
$e_{2n}$ connecting $2n$ to $c_{T_1}(2n) = \tilde{\eta}(c_{T'_1}(12))$.  Therefore the two new vertices are connected to $\tilde{\eta}(G_{T'_1,T'_2}^{H'})$, so $G_{T_1,T_2}^{H}$ is connected.

Now suppose that $H$ is obtained from $H'$ by a Henneberg move of type (\ref{hen2})
by removing edge $12$, adding vertex $n$ and adding edges $1n,2n,3n$ (without loss of generality).
We will construct $T_i$ from $T'_i$ by adding leaf $n$ and a new clade $C_i\cup\{n\}$ for some chosen clade or singleton set $C_i$ of $T'_i$.
Let $H''$ be $H'$ minus the edge $12$, so it is a subgraph of $H$, and
there is graph homomorphism $\tilde{\eta}:G_{T'_1,T'_2}^{H''} \to G_{T_1,T_2}^{H}$.
The graph $G_{T'_1,T'_2}^{H''}$ has two connected components with $c_{T'_1}(12)$ and $c_{T'_2}(12)$ in different ones.  Therefore $\tilde{\eta}(G_{T'_1,T'_2}^{H''})$ also has two connected components.  Thus we must choose $C_1$ and $C_2$ so that the edges $e_{1n},e_{2n},e_{3n}$ connect the two connected components of $\tilde{\eta}(G_{T'_1,T'_2}^{H''})$ and the two new clades $C_1\cup\{n\}$ and $C_2\cup\{n\}$.  The way $C_1$ and $C_2$ are chosen will depend on the relative positions of $1,2,3$ in $T'_1$ and $T'_2$.  We divide the situations into three cases, listed below and pictured in Figure \ref{fig:lamanCases}.
 
 {\bf Case 1}: Suppose 1 and 2 are closer to each other than to 3 in both $T'_1$ and $T'_2$ and that $c_{T'_1}(123)$ and $c_{T'_2}(123)$ are in different connected components of $G_{T'_1,T'_2}^{H''}$.  Let $C_1 = \{1\}$ and $C_2 = c_{T'_2}(12)$.
 \begin{itemize}
  \item $e_{1n}$ connects $C_1\cup\{n\}$ to $C_2\cup\{n\}$.
  \item $e_{2n}$ connects $\tilde{\eta}(c_{T'_1}(12))$ to $C_2\cup\{n\}$.
  \item $e_{3n}$ connects $\tilde{\eta}(c_{T'_1}(123))$ to $\tilde{\eta}(c_{T'_2}(123))$.
 \end{itemize}
 Therefore $G_{T_1,T_2}^{H}$ is connected.
 
 {\bf Case 2}: Suppose 1 and 2 are closer to each other than to 3 in both $T'_1$ and $T'_2$ and that $c_{T'_1}(123)$ and $c_{T'_2}(123)$ are in the same component of $G_{T'_1,T'_2}^{H''}$.  Either $c_{T'_1}(12)$ or $c_{T'_2}(12)$ are in the opposite component, so without loss of generality take it to be $c_{T'_1}(12)$.  Let $C_1 = \{1\}$ and $C_2 = \{3\}$.
 \begin{itemize}
  \item $e_{1n}$ connects $C_1\cup\{n\}$ to $\tilde{\eta}(c_{T'_2}(123))$.
  \item $e_{2n}$ connects $\tilde{\eta}(c_{T'_1}(12))$ to $\tilde{\eta}(c_{T'_2}(123))$.
  \item $e_{3n}$ connects $\tilde{\eta}(c_{T'_1}(123))$ to $C_2\cup\{n\}$.
 \end{itemize}
 Therefore $G_{T_1,T_2}^{H}$ is connected.
 
 {\bf Case 3}: Suppose 3 is closer to 1 or 2 than to the other in one of $T'_1$ or $T'_2$.  Without loss of generality, take 3 and 1 closer to each other than to 2 in $T'_2$.  Let $C_1 = \{1\}$ and $C_2 = c_{T'_2}(13)$.
 \begin{itemize}
  \item $e_{1n}$ connects $C_1\cup\{n\}$ to $C_2\cup\{n\}$.
  \item $e_{2n}$ connects $\tilde{\eta}(c_{T'_1}(12))$ to $\tilde{\eta}(c_{T'_2}(12))$.
  \item $e_{3n}$ connects $\tilde{\eta}(c_{T'_1}(13))$ to $C_2\cup\{n\}$.
 \end{itemize}
 Therefore $G_{T_1,T_2}^{H}$ is connected.
 \end{proof}
 
     \begin{figure}[!ht]
        \captionsetup{width=.9\linewidth}
        \caption{For $T_1$ and $T_2$ trees as in the proof of Lemma \ref{lem:HtoT}, on the left are the restrictions of $T_1$ and $T_2$ to the leaf set $123n$ and on the right is the restricted clade graph $G_{T_1,T_2}^H$.  Each label on $G_{T_1,T_2}^H$ indicates the clade that is the smallest containing those elements in $T_1$ and $T_2$ on the left and right respectively.  Solid outlines represent connected components of $G_{T'_1,T'_2}^{H''}$.  The leaf $n$ in each tree $T_1,T_2$ is placed such that the new edges $e_{1n},e_{2n},e_{3n}$ reconnect the graph, including the two new vertices, labeled $n$.}\label{fig:lamanCases}
        \begin{subfigure}[b]{1\textwidth}
        \centering
        \begin{tikzpicture}
            \node [below] (a) at (0,0){$1$};
            \node [below] (b) at (1,0){$n$};
            \node [below] (c) at (2,0){$2$};
            \node [below] (d) at (3,0){$3$};
            \node [above] (T1) at (3/2,3/2){$T_1$};
            \draw (0,0) -- (1/2,1/2);
            \draw (1,0) -- (1/2,1/2);
            \draw (1/2,1/2) -- (1,1);
            \draw (2,0) -- (1,1);
            \draw (3,0) -- (3/2,3/2);
            \draw (1/2,1/2) -- (3/2,3/2);
        \end{tikzpicture}
        \quad
        \begin{tikzpicture}
            \node [below] (a) at (0,0){$1$};
            \node [below] (b) at (1,0){$2$};
            \node [below] (c) at (2,0){$n$};
            \node [below] (d) at (3,0){$3$};
            \node [above] (T1) at (3/2,3/2){$T_2$};
            \draw (0,0) -- (1/2,1/2);
            \draw (1,0) -- (1/2,1/2);
            \draw (1/2,1/2) -- (1,1);
            \draw (2,0) -- (1,1);
            \draw (3,0) -- (3/2,3/2);
            \draw (1/2,1/2) -- (3/2,3/2);
        \end{tikzpicture}
        \quad
        \begin{tikzpicture}
            \vertex (11) at (0,0)[label=left:$n\;$]{};
            \vertex (12) at (0,1)[label=left:$12\;$]{};
            \vertex (13) at (0,2)[label=left:$123\;$]{};
            \vertex (21) at (2,0)[label=right:$\;n$]{};
            \vertex (22) at (2,1)[label=right:$\;12$]{};
            \vertex (23) at (2,2)[label=right:$\;123$]{};
            \path
                (11) edge (21)
                (12) edge (21)
                (13) edge (23)
                (12) edge[dashed] (22)
            ;
            \draw (0,1.5) ellipse (.4 and .8);
            \draw (2,1.5) ellipse (.4 and .8);
        \end{tikzpicture}
        \caption{Case 1: 1 and 2 are closest in both $T'_1$ and $T'_2$, and the vertices labeled $123$ are in opposite components.  The components detached by deleting $e_{12}$ are reconnected by $e_{3n}$.  The pairing of the top and middle vertices into components could also be reversed from what is pictured.}
        \end{subfigure}
        
        \begin{subfigure}[b]{1\textwidth}
        \centering
        \begin{tikzpicture}
            \node [below] (a) at (0,0){$1$};
            \node [below] (b) at (1,0){$n$};
            \node [below] (c) at (2,0){$2$};
            \node [below] (d) at (3,0){$3$};
            \node [above] (T1) at (3/2,3/2){$T_1$};
            \draw (0,0) -- (1/2,1/2);
            \draw (1,0) -- (1/2,1/2);
            \draw (1/2,1/2) -- (1,1);
            \draw (2,0) -- (1,1);
            \draw (3,0) -- (3/2,3/2);
            \draw (1/2,1/2) -- (3/2,3/2);
        \end{tikzpicture}
        \quad
        \begin{tikzpicture}
            \node [below] (a) at (0,0){$1$};
            \node [below] (b) at (1,0){$2$};
            \node [below] (c) at (2,0){$n$};
            \node [below] (d) at (3,0){$3$};
            \node [above] (T1) at (3/2,3/2){$T_2$};
            \draw (0,0) -- (1/2,1/2);
            \draw (1,0) -- (1/2,1/2);
            \draw (1/2,1/2) -- (1,1);
            \draw (2,0) -- (5/2,1/2);
            \draw (3,0) -- (3/2,3/2);
            \draw (1/2,1/2) -- (3/2,3/2);
        \end{tikzpicture}
        \quad
        \begin{tikzpicture}
            \vertex (11) at (0,0)[label=left:$n\;$]{};
            \vertex (12) at (0,1)[label=left:$12\;$]{};
            \vertex (13) at (0,2)[label=left:$123\;$]{};
            \vertex (21) at (2,0)[label=right:$\;n$]{};
            \vertex (22) at (2,1)[label=right:$\;12$]{};
            \vertex (23) at (2,2)[label=right:$\;123$]{};
            \path
                (11) edge (23)
                (12) edge (23)
                (13) edge (21)
                (12) edge[dashed] (22)
            ;
            \draw (-0.3,2) to [out=90,in=180] (1,2.5)
                           to [out=0,in=100] (2.3,2)
                           to [out=280,in=80] (2.3,1)
                           to [out=260,in=0] (2,0.6)
                           to [out=180,in=0] (0.7,1.5)
                           to [out=180,in=270] (-0.3,2);
            \draw (0,1) ellipse (.4 and .4);
        \end{tikzpicture}
        \caption{Case 2: 1 and 2 are closest in both $T'_1$ and $T'_2$, and the vertices labeled $123$ are in the same components.  The components detached by deleting $e_{12}$ are reconnected by $e_{2n}$.}
        \end{subfigure}
        
        \begin{subfigure}[b]{1\textwidth}
        \centering
        \begin{tikzpicture}
            \node [below] (a) at (0,0){$1$};
            \node [below] (b) at (1,0){$n$};
            \node [below] (c) at (2,0){$2$};
            \node [below] (d) at (3,0){$3$};
            \node [above] (T1) at (3/2,3/2){$T_1$};
            \draw (0,0) -- (1/2,1/2);
            \draw (1,0) -- (1/2,1/2);
            \draw (1/2,1/2) -- (1,1);
            \draw (2,0) -- (1,1);
            \draw (3,0) -- (3/2,3/2);
            \draw (1/2,1/2) -- (3/2,3/2);
        \end{tikzpicture}
        \quad
        \begin{tikzpicture}
            \node [below] (a) at (0,0){$1$};
            \node [below] (b) at (1,0){$3$};
            \node [below] (c) at (2,0){$n$};
            \node [below] (d) at (3,0){$2$};
            \node [above] (T1) at (3/2,3/2){$T_2$};
            \draw (0,0) -- (1/2,1/2);
            \draw (1,0) -- (1/2,1/2);
            \draw (1/2,1/2) -- (1,1);
            \draw (2,0) -- (1,1);
            \draw (3,0) -- (3/2,3/2);
            \draw (1/2,1/2) -- (3/2,3/2);
        \end{tikzpicture}
        \quad
        \begin{tikzpicture}
            \vertex (11) at (0,0)[label=left:$n\;$]{};
            \vertex (12) at (0,1)[label=left:$12\;$]{};
            \vertex (13) at (0,2)[label=left:$13\;$]{};
            \vertex (21) at (2,0)[label=right:$\;n$]{};
            \vertex (22) at (2,1)[label=right:$\;13$]{};
            \vertex (23) at (2,2)[label=right:$\;12$]{};
            \path
                (11) edge (21)
                (12) edge[bend left=20] (23)
                (13) edge (21)
                (12) edge[dashed, bend right=20] (23)
            ;
            \draw (1,1.5) ellipse (1.4 and 1.1) [dashed];
        \end{tikzpicture}\\ \vspace{2em}
        \begin{tikzpicture}
            \node [below] (a) at (0,0){$1$};
            \node [below] (b) at (1,0){$n$};
            \node [below] (c) at (2,0){$3$};
            \node [below] (d) at (3,0){$2$};
            \node [above] (T1) at (3/2,3/2){$T_1$};
            \draw (0,0) -- (1/2,1/2);
            \draw (1,0) -- (1/2,1/2);
            \draw (1/2,1/2) -- (1,1);
            \draw (2,0) -- (1,1);
            \draw (3,0) -- (3/2,3/2);
            \draw (1/2,1/2) -- (3/2,3/2);
        \end{tikzpicture}
        \quad
        \begin{tikzpicture}
            \node [below] (a) at (0,0){$1$};
            \node [below] (b) at (1,0){$3$};
            \node [below] (c) at (2,0){$n$};
            \node [below] (d) at (3,0){$2$};
            \node [above] (T1) at (3/2,3/2){$T_2$};
            \draw (0,0) -- (1/2,1/2);
            \draw (1,0) -- (1/2,1/2);
            \draw (1/2,1/2) -- (1,1);
            \draw (2,0) -- (1,1);
            \draw (3,0) -- (3/2,3/2);
            \draw (1/2,1/2) -- (3/2,3/2);
        \end{tikzpicture}
        \quad
        \begin{tikzpicture}
            \vertex (11) at (0,0)[label=left:$n\;$]{};
            \vertex (12) at (0,1)[label=left:$13\;$]{};
            \vertex (13) at (0,2)[label=left:$12\;$]{};
            \vertex (21) at (2,0)[label=right:$\;n$]{};
            \vertex (22) at (2,1)[label=right:$\;13$]{};
            \vertex (23) at (2,2)[label=right:$\;12$]{};
            \path
                (11) edge (21)
                (13) edge[bend left=20] (23)
                (12) edge (21)
                (13) edge[dashed, bend right=20] (23)
            ;
            \draw (1,1.5) ellipse (1.4 and 1.1) [dashed];
        \end{tikzpicture}\\ \vspace{2em}
        \begin{tikzpicture}
            \node [below] (a) at (0,0){$1$};
            \node [below] (b) at (1,0){$n$};
            \node [below] (c) at (2,0){$2$};
            \node [below] (d) at (3,0){$3$};
            \node [above] (T1) at (3/2,3/2){$T_1$};
            \draw (0,0) -- (1/2,1/2);
            \draw (1,0) -- (1/2,1/2);
            \draw (1/2,1/2) -- (1,1);
            \draw (2,0) -- (5/2,1/2);
            \draw (3,0) -- (3/2,3/2);
            \draw (1/2,1/2) -- (3/2,3/2);
        \end{tikzpicture}
        \quad
        \begin{tikzpicture}
            \node [below] (a) at (0,0){$1$};
            \node [below] (b) at (1,0){$3$};
            \node [below] (c) at (2,0){$n$};
            \node [below] (d) at (3,0){$2$};
            \node [above] (T1) at (3/2,3/2){$T_2$};
            \draw (0,0) -- (1/2,1/2);
            \draw (1,0) -- (1/2,1/2);
            \draw (1/2,1/2) -- (1,1);
            \draw (2,0) -- (1,1);
            \draw (3,0) -- (3/2,3/2);
            \draw (1/2,1/2) -- (3/2,3/2);
        \end{tikzpicture}
        \quad
        \begin{tikzpicture}
            \vertex (11) at (0,0)[label=left:$n\;$]{};
            \vertex (12) at (0,1)[label=left:$23\;$]{};
            \vertex (13) at (0,2)[label=left:$12\;$]{};
            \vertex (21) at (2,0)[label=right:$\;n$]{};
            \vertex (22) at (2,1)[label=right:$\;13$]{};
            \vertex (23) at (2,2)[label=right:$\;12$]{};
            \path
                (11) edge (21)
                (13) edge[bend left=20] (23)
                (13) edge (21)
                (13) edge[dashed, bend right=20] (23)
            ;
            \draw (1,1.5) ellipse (1.4 and 1.1) [dashed];
        \end{tikzpicture}
        \caption{Case 3: 1 and 3 are closest in $T'_2$.  There are three subcases depending on $T'_1$.  The pair of vertices detached by $e_{12}$ are reconnected by $e_{2n}$.  Thus the precise grouping of the two components of $G_{T'_1,T'_2}^{H''}$ doesn't matter.}
        \end{subfigure}
\end{figure}
 
\begin{thm}[Laman's Theorem]
    Given a graph $H$, the following are equivalent:
    \begin{enumerate}
        \item\label{item:laman} $H$ is Laman,
        \item\label{item:henneberg} $H$ is Henneberg,
        \item\label{item:clade} there exist rooted binary trees $T_1$ and $T_2$ such that $G_{T_1,T_2}^H$ is a tree, and
        \item\label{item:minimallyRigid} $H$ is generically minimally rigid in the plane.
    \end{enumerate}
\end{thm}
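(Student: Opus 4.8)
The plan is to deduce the four-way equivalence by bundling together results already established in this section, so the theorem itself requires no new argument. Concretely, I would prove the equivalence of \ref{item:laman}, \ref{item:henneberg}, and \ref{item:clade} by running the cycle of implications \ref{item:laman} $\Rightarrow$ \ref{item:henneberg} $\Rightarrow$ \ref{item:clade} $\Rightarrow$ \ref{item:laman}, and then fold in \ref{item:minimallyRigid} using the biconditional of Proposition \ref{prop:tropicalMinRigidCondition}, which already identifies \ref{item:clade} with \ref{item:minimallyRigid}.

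For the cycle: the implication \ref{item:laman} $\Rightarrow$ \ref{item:henneberg} is exactly Lemma \ref{lem:LtoH}, and \ref{item:henneberg} $\Rightarrow$ \ref{item:clade} is exactly Lemma \ref{lem:HtoT}. For \ref{item:clade} $\Rightarrow$ \ref{item:laman} I would argue contrapositively: if $H$ fails to be Laman, then Lemma \ref{lem:TtoL} says $G_{T_1,T_2}^H$ is not a tree for \emph{any} pair of rooted trees, in particular for any pair of rooted binary trees, so \ref{item:clade} fails. This closes the loop and gives \ref{item:laman} $\Leftrightarrow$ \ref{item:henneberg} $\Leftrightarrow$ \ref{item:clade}. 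Adjoining Proposition \ref{prop:tropicalMinRigidCondition}, which states precisely that $H$ is minimally generically rigid in $\rr^2$ if and only if there is a pair of rooted binary trees $T_1,T_2$ with $G_{T_1,T_2}^H$ a tree, then yields \ref{item:clade} $\Leftrightarrow$ \ref{item:minimallyRigid} and completes the proof.

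Since every ingredient is in place, there is no genuine obstacle remaining in the final theorem; its proof is a routine concatenation. The real work lives upstream: the delicate case analysis in Lemma \ref{lem:HtoT}, where one must build $T_1$ and $T_2$ through each Henneberg move while keeping the restricted clade graph connected, and the dimension computation in Proposition \ref{prop:tropicalMinRigidCondition}, which converts connectivity of $G_{T_1,T_2}^H$ into the statement that $\pi_H(\trop(\cm_n^2))$ has full dimension $2n-3$ and hence, via Lemma \ref{lemma:tropicalPreservesAlgebraicMatroid} and Lemma \ref{lemma:indSetCM}(3), into generic minimal rigidity of $H$.
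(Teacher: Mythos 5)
Your proposal is correct and matches the paper's proof exactly: both close the cycle \ref{item:laman} $\Rightarrow$ \ref{item:henneberg} $\Rightarrow$ \ref{item:clade} $\Rightarrow$ \ref{item:laman} via Lemmas \ref{lem:LtoH}, \ref{lem:HtoT}, and \ref{lem:TtoL} (the last in contrapositive form), and then attach \ref{item:minimallyRigid} via Proposition \ref{prop:tropicalMinRigidCondition}. Nothing is missing and no new argument is needed.
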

\begin{proof}
    Proposition \ref{prop:tropicalMinRigidCondition} tells us that \eqref{item:clade} and \eqref{item:minimallyRigid} are equivalent.
    The implications \eqref{item:laman} $\implies$ \eqref{item:henneberg}, \eqref{item:henneberg} $\implies$ \eqref{item:clade}, and \eqref{item:clade} $\implies$ \eqref{item:laman}
    are Lemmas \ref{lem:LtoH}, \ref{lem:HtoT}, and \ref{lem:TtoL}, respectively.
\end{proof}

\section*{Acknowledgments}
\noindent
This material is based upon work supported by the National Science Foundation under Grant No. DMS-1439786 while the authors were in residence at the Institute for Computational and Experimental Research in Mathematics in Providence, RI, during the Fall 2018 semester.
The first author was also supported by an NSF Mathematical Sciences Postdoctoral Research Fellowship (DMS-1802902)
and is grateful to Seth Sullivant for many helpful conversations.
We also thank two anonymous referees for their careful reading and valuable suggestions.

\bibliographystyle{plain}
\footnotesize
\bibliography{tropicalLaman}

\begin{thebibliography}{10}

\bibitem{anderson2008rigid}
Brian~DO Anderson, Changbin Yu, Baris Fidan, and Julien~M Hendrickx.
\newblock Rigid graph control architectures for autonomous formations.
\newblock {\em IEEE Control Systems Magazine}, 28(6):48--63, 2008.

\bibitem{ardila2006bergman}
Federico Ardila and Caroline~J Klivans.
\newblock The {B}ergman complex of a matroid and phylogenetic trees.
\newblock {\em Journal of Combinatorial Theory, Series B}, 96(1):38--49, 2006.

\bibitem{bauchy2014nanoscale}
Mathieu Bauchy, Mohammad~Javad Abdolhosseini~Qomi, Christophe Bichara,
  Franz-Joseph Ulm, and Roland J-M Pellenq.
\newblock Nanoscale structure of cement: viewpoint of rigidity theory.
\newblock {\em The Journal of Physical Chemistry C}, 118(23):12485--12493,
  2014.

\bibitem{bernstein2017completion}
Daniel~Irving Bernstein.
\newblock Completion of tree metrics and rank 2 matrices.
\newblock {\em Linear Algebra and its Applications}, 533:1--13, 2017.

\bibitem{bernstein2018matroids}
Daniel~Irving Bernstein.
\newblock {\em Matroids in Algebraic Statistics.}
\newblock PhD thesis, North Carolina State University, 2018.
\newblock \url{https://repository.lib.ncsu.edu/handle/1840.20/35009}.

\bibitem{bogart2007computing}
Tristram Bogart, Anders~Nedergaard Jensen, David Speyer, Bernd Sturmfels, and
  Rekha~R Thomas.
\newblock Computing tropical varieties.
\newblock {\em Journal of Symbolic Computation}, 42(1-2):54--73, 2007.

\bibitem{capco2018number}
Jose Capco, Matteo Gallet, Georg Grasegger, Christoph Koutschan, Niels Lubbes,
  and Josef Schicho.
\newblock The number of realizations of a laman graph.
\newblock {\em SIAM Journal on Applied Algebra and Geometry}, 2(1):94--125,
  2018.

\bibitem{eren2005merging}
Tolga Eren, Brian Anderson, Walter Whiteley, A~Stephen Morse, and Peter~N
  Belhumeur.
\newblock Merging globally rigid formations of mobile autonomous agents.
\newblock 2005.

\bibitem{eren2002framework}
Tolga Eren, Peter~N Belhumeur, Brian~DO Anderson, and A~Stephen Morse.
\newblock A framework for maintaining formations based on rigidity.
\newblock {\em IFAC Proceedings Volumes}, 35(1):499--504, 2002.

\bibitem{eren2004rigidity}
Tolga Eren, OK~Goldenberg, Walter Whiteley, Yang~Richard Yang, A~Stephen Morse,
  Brian~DO Anderson, and Peter~N Belhumeur.
\newblock Rigidity, computation, and randomization in network localization.
\newblock In {\em IEEE INFOCOM 2004}, volume~4, pages 2673--2684. IEEE, 2004.

\bibitem{graver2008combinatorial}
Jack~E Graver, Brigitte Servatius, and Herman Servatius.
\newblock Combinatorial rigidity (graduate studies in mathematics, vol 2).
\newblock 2008.

\bibitem{henneberg1911graphische}
Lebrecht Henneberg.
\newblock {\em Die graphische Statik der starren Systeme}, volume~31.
\newblock BG Teubner, 1911.

\bibitem{jacobs2001protein}
Donald~J Jacobs, Andrew~J Rader, Leslie~A Kuhn, and Michael~F Thorpe.
\newblock Protein flexibility predictions using graph theory.
\newblock {\em Proteins: Structure, Function, and Bioinformatics},
  44(2):150--165, 2001.

\bibitem{laman1970graphs}
Gerard Laman.
\newblock On graphs and rigidity of plane skeletal structures.
\newblock {\em Journal of Engineering mathematics}, 4(4):331--340, 1970.

\bibitem{lee2003introduction}
M~Lee~John.
\newblock Introduction to smooth manifolds.
\newblock {\em Graduate Texts in Mathematics}, 218, 2003.

\bibitem{liberti2011molecular}
Leo Liberti, Carlile Lavor, Antonio Mucherino, and Nelson Maculan.
\newblock Molecular distance geometry methods: from continuous to discrete.
\newblock {\em International Transactions in Operational Research},
  18(1):33--51, 2011.

\bibitem{maclagan2015introduction}
Diane Maclagan and Bernd Sturmfels.
\newblock {\em Introduction to tropical geometry}, volume 161.
\newblock American mathematical society Providence, RI, 2015.

\bibitem{micoulaut2017material}
Matthieu Micoulaut and Yuanzheng Yue.
\newblock Material functionalities from molecular rigidity: Maxwell’s modern
  legacy.
\newblock {\em Mrs Bulletin}, 42(1):18--22, 2017.

\bibitem{olfati2002graph}
Reza Olfati-Saber and Richard~M Murray.
\newblock Graph rigidity and distributed formation stabilization of
  multi-vehicle systems.
\newblock 2002.

\bibitem{pollaczek1927gliederung}
H~Pollaczek-Geiringer.
\newblock {\"U}ber die gliederung ebener fachwerke.
\newblock {\em ZAMM-Journal of Applied Mathematics and Mechanics/Zeitschrift
  f{\"u}r Angewandte Mathematik und Mechanik}, 7(1):58--72, 1927.

\bibitem{rader2002protein}
AJ~Rader, Brandon~M Hespenheide, Leslie~A Kuhn, and Michael~F Thorpe.
\newblock Protein unfolding: rigidity lost.
\newblock {\em Proceedings of the National Academy of Sciences},
  99(6):3540--3545, 2002.

\bibitem{semple2003phylogenetics}
Charles Semple and Mike~A Steel.
\newblock {\em Phylogenetics}, volume~24.
\newblock Oxford University Press on Demand, 2003.

\bibitem{speyer2004tropical}
David Speyer and Bernd Sturmfels.
\newblock The tropical grassmannian.
\newblock {\em Advances in Geometry}, 4(3):389--411, 2004.

\bibitem{sturmfels2007elimination}
Bernd Sturmfels and Jenia Tevelev.
\newblock Elimination theory for tropical varieties.
\newblock {\em arXiv preprint arXiv:0704.3471}, 2007.

\bibitem{yu2017algebraic}
Josephine Yu.
\newblock Algebraic matroids and set-theoretic realizability of tropical
  varieties.
\newblock {\em Journal of Combinatorial Theory, Series A}, 147:41--45, 2017.

\bibitem{zhu2010universal}
Zhisu Zhu, Anthony Man-Cho So, and Yinyu Ye.
\newblock Universal rigidity and edge sparsification for sensor network
  localization.
\newblock {\em SIAM Journal on Optimization}, 20(6):3059--3081, 2010.

\end{thebibliography}

\end{document}